\numberwithin{equation}{section}
\DeclareMathOperator{\dvol}{dvol}
\DeclareMathOperator{\dM}{dvol^{(M)}}
\newcommand{\reste}{\varepsilon}
\newcommand{\y}{ \left( \frac{y}{\lambda \kappa_n} \right)}
\DeclareMathOperator{\Ric}{Ric}
\DeclareMathOperator{\Scal}{Sc}
\newcommand{\com}[1]{}
\DeclareMathOperator{\vol}{vol}
\newcommand{\sph}{\mathcal{S}}
\newcommand{\Pl}{\mathcal{P}_\lambda}
\newcommand{\R}{\mathbb{R}}
\newcommand{\bo}{\mathcal{B}}
\newcommand{\E}{\mathbb{E}}
\newcommand{\C}{\mathcal{C}}
\newcommand{\Cl}{\mathcal{C}_{x_0,\lambda}^{(M)}}
\newcommand{\Nl}{N(\mathcal{C}_{x_0,\lambda}^{(M)})}
\newcommand{\proba}{\mathbb{P}}
\newcommand{\dd}{\mathrm{d}}
\newcommand{\J}{\mathcal{J}}
\author{}
\title{Mean asymptotics for a Poisson-Voronoi cell on a Riemannian manifold}
\date\today
\newtheorem{thm} {Theorem}[section]
\newtheorem*{cit}{Theorem}
\newtheorem{lem} [thm] {Lemma}
\newtheorem{prop} [thm] {Proposition}
\begin{document}
\author{Pierre Calka\textsuperscript{1}, Aur\'elie Chapron\textsuperscript{2} and Nathana\"el Enriquez\textsuperscript{3}}
\footnotetext{2010 {\it Mathematics Subject Classification}. Primary 60D05, 53B21; Secondary 53C65, 60G55. {\it Key words and phrases}. Poisson point process; Poisson-Voronoi tessellation; Riemannian manifold; Curvatures; Jacobi fields; Gauss-Bonnet theorem; Blaschke-Petkantschin formula.}
\footnotetext{\textsuperscript{1} Laboratoire de Math\'ematiques Rapha\"el Salem, UMR 6085, Universit\'e de Rouen, avenue de l'Universit\'e,
Technop\^ole du Madrillet, 76801 Saint-Etienne-du-Rouvray, France. E-mail: {\tt pierre.calka@univ-rouen.fr}}
\footnotetext{\textsuperscript{2} Laboratoire MODAL'X, EA 3454, Universit\'e Paris Nanterre, 200 avenue de la R\'e\-pu\-bli\-que, 92001 Nanterre, France. E-mail: {\tt achapron@parisnanterre.fr}}
\footnotetext{\textsuperscript{3} Laboratoire de Mathématiques d'Orsay, Universit\'e Paris-Sud, 
	Bâtiment 425, 91405 Orsay, France. E-mail: {\tt nathanael.enriquez@u-psud.fr}}
\maketitle
\begin{abstract}
In this paper, we consider a Riemannian manifold $M$ and the Poisson-Voronoi tessellation generated by the union of a fixed point $x_0$ and a Poisson point process of intensity $\lambda$ on $M$. We obtain asymptotic expansions up to the second order for the means of several characteristics of the Voronoi cell associated with $x_0$, including its volume and number of vertices. In each case, the first term of the estimate is equal to the mean characteristic in the Euclidean setting while the second term may contain a particular curvature of $M$ at $x_0$: the scalar curvature in the case of the mean number of vertices, the Ricci curvature in the case of the density of vertices and the sectional curvatures in the cases of the volume and number of vertices of a section of the Voronoi cell. Several explicit formulas are also derived in the particular case of constant curvature. The key tool for proving these results is a new change of variables formula of Blaschke-Petkantschin type in the Riemannian setting. Finally, a probabilistic proof of the Gauss-Bonnet Theorem is deduced from the asymptotic estimate of the total number of vertices of the tessellation in dimension two.  
\end{abstract}
\section{Introduction and results}

The Poisson-Voronoi tessellation is one of the most natural models of random tessellation of the Euclidean space \cite{Mol94}. It is used in many domains such as cristallography \cite{Mei53}, telecommunications \cite{Bac01} and astrophysics \cite{Wey94}. Available results include notably its mean characteristics \cite{Mol89} as well as some of its distributional \cite{Bau07} and asymptotical properties \cite{HRS04}. 

Let us now extend the definition of the Poisson-Voronoi tessellation to a Riemannian manifold. One of our goals is to use the mean  combinatorial characteristics of this tessellation in order to estimate the local geometric characteristics of the manifold.

Let $M$ be a complete and connected ${\mathcal C}^{\infty}$ Riemannian manifold of dimension $n\ge 2$ endowed with its Riemannian metric $d^{(M)}(\cdot,\cdot)$. For $x\in M$, let $\langle \cdot , \cdot \rangle_x$ and $\|\cdot\|_x$ be respectively the metric tensor and the induced norm on the tangent space of $M$ at $x$, denoted by $T_xM$. We denote by $\vol^{(M)}$ the volume measure on $M$ associated with $d^{(M)}$. Let $\Pl$ be a Poisson point process on $M$ of intensity measure $\lambda \vol^{(M)}$. The Poisson-Voronoi tessellation generated by the set of nuclei $\Pl$ is the collection of cells
\[ \C^{(M)}(x,\Pl)= \{ y \in M, d(x,y) \le d(x',y), \forall x' \in \Pl \},~x\in\Pl.\] 
In order to recover the local geometry around a point $x_0\in M$, we add this point to the process $\Pl$ and we investigate the characteristics at high intensity of the Voronoi cell associated with the nucleus $x_0$, i.e. $$\Cl:=\C^{(M)}(x_0,\Pl\cup \{x_0\}).$$ 
We do so in the high intensity setting which ensures that the cell $\Cl$ lies in a small neighborhood of $x_0$ with high probability. 
When $M$ is the Euclidean space, Slivnyak's theorem implies that this cell can be seen as a realization of the typical cell of the Poisson-Voronoi tessellation, see e.g. \cite{Mol89}. As in the Euclidean setting, we call vertex of the cell $\Cl$ 
any non-empty intersection of $\Cl$ with $n$ other Voronoi cells.

We are in particular interested in the mean values of two characteristics of $\Cl$, namely its volume $\vol^{(M)}(\Cl)$ and its number of vertices that we denote by $\Nl$. To the best of our knowledge, this has been considered up to now in the Euclidean case and for two non-Euclidean manifolds only, namely the sphere and the hyperbolic space. In both cases the constant curvature implies the invariance of the Voronoi tessellation generated by $\Pl$ under the action of a specific transformation group and makes it possible to define a typical cell, equal in distribution to $\Cl$. For $k>0$ and $n\ge 1$, we denote by ${\mathcal S}_{k}^{n}$ the $n$-dimensional sphere centered at the origin and of radius $1/k$ and by ${\mathcal H}_{k}^{n}$ the hyperbolic space of curvature $-k^2$. For sake of simplicity, the $n$-dimensional sphere of curvature $1$ will be denoted by ${\mathcal S}^n$.

Besides explicit formulas in the particular case of the constant curvature, our main results include high intensity asymptotics for the mean volume and mean number of vertices of $\Cl$ and its sections. Calculating these estimates requires several fundamental assumptions on the Riemannian manifold that have to be made for the whole paper. They are the following:
\begin{align*}
\mbox{($\mbox{A}_1$)} & \mbox{ the sectional curvatures of $M$ are uniformly bounded from above and from below},\\
\mbox{($\mbox{A}_2$)} & \mbox{ $M$ has a global injectivity radius},\\
\mbox{($\mbox{A}_3$)} & \mbox{ the number of geodesic balls containing $(n+1)$ fixed distinct points of $M$ in their boundaries is}\\& \mbox{ uni\-form\-ly bounded},\\  
\mbox{($\mbox{A}_4$)} & \mbox{ there exists $r_{\mbox{\tiny{max}}}>0$ such that the number of geodesic balls of radius less than $r_{\mbox{\tiny{max}}}$ and containing}\\& \mbox{ $(n+1)$ fixed distinct points of $M$ in their boundaries is at most $1$}.  
\end{align*}
The first two assumptions ($\mbox{A}_1$) and ($\mbox{A}_2$) are quite standard in Riemannian geometry and notably guarantee that several comparison theorems, in particular for the volume growth, can be applied. The third and fourth assumptions ($\mbox{A}_3$) and ($\mbox{A}_4$) are very specific to the substance of this paper and to the construction of Voronoi vertices at the intersection of $(n+1)$ different Voronoi cells. It says that there is only a finite number of circumscribed balls of a fixed $n$-dimensional simplex, that this number can be bounded independently of the simplex and that it is at most 1 if the radius of the ball is small enough. These properties might be subproducts of other more well-known results on Riemannian manifolds but to the best of our knowledge, this is not the case. Surprisingly, the question of describing the set of points which are equidistant from a finite number of fixed points is still largely open. In the rest of the paper, we will assume implicitly that the three conditions ($\mbox{A}_1$), ($\mbox{A}_2$), ($\mbox{A}_3$) and ($\mbox{A}_4$) are satisfied by $M$. Naturally, this includes the particular cases of ${\mathbb R}^n$, $\sph_k^n$ and ${\mathcal H}_k^n$.

In $\R^n$, it is well-known that the mean volume of the typical cell is equal to 
$1/\lambda$, see e.g. \cite[Theorem 7.2, case $s=d$]{Mol89}. This is due to the fact that there are, on average, $\lambda$ cells per unit volume. Similarly, Miles obtained the mean volume of the typical cell of the Voronoi tessellation generated by a fixed number of independent points uniformly distributed in ${\mathcal S}^2$ \cite{Mil71}. 

In Theorem \ref{thm:propvol}, we provide a general asymptotic expansion of $\E[\vol^{(M)}(\Cl)]$ up to the second order at high intensity as well as explicit formulas in the particular cases of ${\mathcal S}_{k}^n$ and ${\mathcal H}_{k}^n$. Henceforth, the equality $g(\lambda)= o(f(\lambda))$ means that $\lim_{\lambda \to \infty}\frac{g(\lambda)}{f(\lambda)}=0$.
\begin{thm}\label{thm:propvol}
(i)  
When $\lambda\to\infty$, we get
\begin{equation}\label{eq:volasymp}
 \E[\vol^{(M)}(\Cl)] = \frac{ 1}{\lambda} + o\left( \frac{1}{\lambda^{1+\frac 2n}}\right)
\end{equation}
(ii) For every $n\ge 2$, $k>0$, we get when $M={\mathcal S}_{k}^n$
$$\E[\vol^{({\mathcal S}_{k}^n)}({\mathcal C}_{x_0,\lambda}^{({\mathcal S}_{k}^n)}))] = \frac{ 1}{\lambda}\left(1-e^{-2\sigma_{n-1}W_{n-1}\frac{\lambda}{k^n}}\right)$$
and when $M={\mathcal H}_{k}^n$
$$\E[\vol^{({\mathcal H}_{k}^n)}({\mathcal C}_{x_0,\lambda}^{({\mathcal H}_{k}^n)}))] = \frac{ 1}{\lambda}$$
where $\sigma_{n}=\vol^{({\mathcal S}^n)}({{\mathcal S}^n})=\frac{2\pi^{\frac{n+1}{2}}}{\Gamma\left(\frac{n+1}{2}\right)}$ is the volume of the unit-sphere ${\mathcal S}^{n}$ and $W_{n}=\frac{\Gamma\left(\frac{n+1}{2}\right)\Gamma\left(\frac12\right)}{2\Gamma\left(\frac{n+2}{2}\right)}$ is the $n$-th Wallis integral $\displaystyle\int_0^{\frac{\pi}{2}}\sin^n(t) dt$.
\end{thm}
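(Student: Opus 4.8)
The plan is to derive an exact integral representation for $\E[\vol^{(M)}(\Cl)]$, use it directly in the two homogeneous cases, and combine it with a local second-order expansion around $x_0$ in the general case. Writing $\vol^{(M)}(\Cl)=\int_M \mathbf{1}\{y\in\Cl\}\,\dd\vol^{(M)}(y)$ and applying Fubini, $\E[\vol^{(M)}(\Cl)]=\int_M \proba(y\in\Cl)\,\dd\vol^{(M)}(y)$. Since $y$ lies in $\Cl$ precisely when the geodesic ball $B(y,d^{(M)}(x_0,y))$ contains no point of $\Pl$, the void probability of a Poisson process yields the master identity
\[
\E[\vol^{(M)}(\Cl)] \;=\; \int_M \exp\!\Big(-\lambda\,\vol^{(M)}\big(B(y,d^{(M)}(x_0,y))\big)\Big)\,\dd\vol^{(M)}(y),
\]
valid on any Riemannian manifold.

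For parts (ii) and (iii), homogeneity makes $\vol^{(M)}(B(y,\rho))$ a function $v(\rho)$ of $\rho$ alone, and $v'(\rho)$ is the area of the geodesic sphere of radius $\rho$. As the integrand above depends on $y$ only through $r=d^{(M)}(x_0,y)$, geodesic polar coordinates around $x_0$ reduce it to $\int_0^{D} e^{-\lambda v(r)}\,v'(r)\,\dd r$, where $D$ is the diameter; the substitution $w=v(r)$ gives $\frac1\lambda\big(1-e^{-\lambda v(D)}\big)$. For $\mathcal{H}_k^n$, $D=\infty$ and $v(D)=\infty$, hence $\frac1\lambda$; for $\mathcal{S}_k^n$, $D=\pi/k$ and $v(\rho)=\sigma_{n-1}k^{-n}\int_0^{k\rho}\sin^{n-1}(t)\,\dd t$, so $v(D)=2\sigma_{n-1}W_{n-1}k^{-n}$, which is the announced formula.

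For part (i), fix $\delta$ below the global injectivity radius. If $d^{(M)}(x_0,y)\ge\delta$, then $B(y,d^{(M)}(x_0,y))\supset B(y,\delta)$, whose volume is bounded below by a positive constant via the volume comparison available under $(\mathrm{A}_1)$--$(\mathrm{A}_2)$, so the contribution of $\{d^{(M)}(x_0,y)\ge\delta\}$ to the master integral is $O(e^{-c\lambda})$. On $B(x_0,\delta)$ we pass to geodesic polar coordinates, $\dd\vol^{(M)}(y)=J(r,u)\,\dd r\,\dd u$ with $u$ ranging over the unit sphere $\mathcal{S}^{n-1}$ of $T_{x_0}M$, and insert the classical expansions (uniform in $u$)
\[
J(r,u) = r^{n-1}\Big(1-\tfrac16\Ric_{x_0}(u,u)\,r^2+O(r^3)\Big), \qquad \vol^{(M)}\big(B(\exp_{x_0}(ru),r)\big) = \kappa_n r^n\Big(1-\tfrac{\Scal(x_0)}{6(n+2)}\,r^2+O(r^3)\Big),
\]
$\kappa_n=\sigma_{n-1}/n$ being the volume of the Euclidean unit ball. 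Expanding the exponential, multiplying out, and integrating in $r$ (Gamma integrals) and in $u$ (using $\int_{\mathcal{S}^{n-1}}\Ric_{x_0}(u,u)\,\dd u=\frac{\sigma_{n-1}}{n}\Scal(x_0)$ by symmetry), the leading order reproduces $\frac1\lambda$, while order $\lambda^{-1-2/n}$ gets exactly two contributions, one from the Ricci correction in $J$ and one from the scalar correction in the ball volume; the identity $\Gamma(2+\tfrac2n)=\tfrac{n+2}{n}\Gamma(1+\tfrac2n)$ makes them cancel, and the remaining terms are $o(\lambda^{-1-2/n})$.

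The computational heart is the term-by-term integration in part (i), which is routine once organized; the genuinely delicate points are the uniformity in $u$ of the two local expansions and, in the non-compact case, the tail estimate for $\int_{\{d^{(M)}(x_0,y)\ge\delta\}}$, which rests on Bishop--Gromov-type volume comparison under $(\mathrm{A}_1)$. The notable structural phenomenon is the \emph{exact} cancellation at second order: it is precisely what forces the error term to be $o(\lambda^{-1-2/n})$ rather than a genuine curvature term, in contrast with the number of vertices.
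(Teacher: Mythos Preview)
Your approach is essentially the paper's: same master integral via Fubini and the Poisson void probability, same near/far split, same local expansions of the Jacobian and of the ball volume, same Gamma-integral computation leading to the exact second-order cancellation. Two points deserve comment.

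First, the paper takes a $\lambda$-dependent cutoff $r_\lambda=\lambda^{-\frac{n+1}{n(n+2)}}$ rather than a fixed $\delta$. With a fixed $\delta$, your step ``expanding the exponential'' is not automatic: for $r$ close to $\delta$ one has $\lambda r^{n+2}\to\infty$, so $\exp\big(\lambda\,\tfrac{\kappa_n\Scal_{x_0}^{(M)}}{6(n+2)}r^{n+2}+O(\lambda r^{n+3})\big)$ cannot be replaced by $1+\lambda\,\tfrac{\kappa_n\Scal_{x_0}^{(M)}}{6(n+2)}r^{n+2}$ uniformly on $[0,\delta]$. Your route is still salvageable after the rescaling $y=\lambda\kappa_n r^n$ by dominated convergence (the factor $e^{-y}$ absorbs the polynomial growth), but the paper's choice, which ensures $\lambda r_\lambda^{n+2}=\lambda^{-1/n}\to0$, makes the first-order Taylor bound on the exponential valid uniformly and bypasses any DCT argument.

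Second, your tail argument ``$\vol^{(M)}(B(y,\delta))\ge c$, hence the far contribution is $O(e^{-c\lambda})$'' tacitly assumes $\vol^{(M)}(M)<\infty$: on an infinite-volume manifold the bound $e^{-c\lambda}\cdot\vol^{(M)}\big(\{d^{(M)}(x_0,\cdot)\ge\delta\}\big)$ is vacuous. The paper avoids this by first reducing to a compact $M$ (Proposition~\ref{prop:localdep}), coupling the cell with its restriction to a small geodesic ball and showing they differ with probability $O(e^{-\lambda/c})$. A direct repair of your argument would combine the linear lower bound $\vol^{(M)}(B(y,r))\ge c\,r$ for large $r$ (Lemma~\ref{lem:borneinfvolboule}) with the Bishop--Gromov upper bound on $\vol^{(M)}(B(x_0,R))$ to get an integrable tail; Bishop--Gromov alone, which you invoke, supplies only the upper volume bound, not the lower one you actually need here.
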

Remarkably the estimate \eqref{eq:volasymp} shows that this mean volume does not depend, at first and  second order, on the geometry of the manifold. In order to capture the effects of the local geometry of the manifold on the Voronoi cell, we now focus on the mean number of vertices of $\Cl$.

In $\R^n$, because of the scaling invariance of the Poisson point process, the mean number of vertices of the typical cell does not depend on the intensity $\lambda$ \cite[Theorem 7.2, case $s=0$]{Mol89} and is equal to 
\begin{equation}
  \label{eq:Nmoyeuc}
\E[N({\mathcal C}_{x_0,\lambda}^{(\R^n)})]= 2 \pi^{\frac{n-1}2} n^{n-2} \left(\frac{\Gamma(\frac n2)}{\Gamma(\frac{n+1}2)} \right)^n \frac{ \Gamma( \frac{n^2+1}2)}{\Gamma( \frac{n^2}2)}.  
\end{equation}
In the case of the two-dimensional sphere, Miles \cite{Mil71} obtains the mean number of vertices of the typical cell when the Voronoi tessellation is generated by a fixed number of independent points uniformly distributed in the sphere and conditional on the event that the points are not included in a half-sphere. This result is a consequence of Euler's formula applied to the convex hull of the random points. We can easily deduce from this work the following formula for the mean number of vertices when the Voronoi tessellation is generated by a homogeneous Poisson point process in  ${\mathcal S}_{k}^{2}$: 
\begin{equation}\label{sph}
\E\left[N({\mathcal C}_{x_0,\lambda}^{({\mathcal S}_{k}^2)})\right] =  6 - \frac{3k^2}{\pi \lambda} + e^{-\frac{4\pi\lambda}{k^2}}\left( 6 + \frac{3k^2}{ \pi \lambda}\right).
\end{equation}
Three decades later, a very simple exact formula for the mean number of vertices of $\Cl$ when $M={\mathcal H}^2_{k}$  is derived by Isokawa \cite{Iso2000}:
\begin{equation}\label{eq:hyp}
\E\left[N({\mathcal C}_{x_0,\lambda}^{({\mathcal H}^2_{k})})\right] =  6+\frac{3k^2}{\pi\lambda}.
\end{equation}
Simultaneously he gets an integral formula  in the case of ${\mathcal H}^3_{k}$ \cite{Iso00b}. These results are mainly based on the existence of exact hyperbolic trigonometric formulas and can hardly be directly extended to general manifolds.

 
Theorem \ref{THM} yields a two-term asymptotic expansion of the mean number of vertices of $\Cl$ at high intensity. 
 \begin{thm}\label{THM}
(i) 
When $\lambda\to\infty$, we get
\begin{equation}\label{th1} \E[\Nl] = e_n - d_n\frac{\Scal_{x_0}^{(M)}}{\lambda^{\frac 2n}} + o\left(\frac{1}{\lambda^{\frac 2n}}\right) \end{equation}
where the constant $e_n=\E[N({\mathcal C}_{x_0,\lambda}^{(\R^n)})]$ is given by \eqref{eq:Nmoyeuc}, 
\begin{align}
d_n &=\frac{\pi^{\frac{n-3}2}n^{n+\frac{2}{n}-1}}{n!2^{\frac{2}{n}} (n+2)} \frac{ \Gamma( n+ \frac 2n) \Gamma(\frac n2)^{n+\frac 2n} \Gamma( \frac{n^2 +1}2)}{\Gamma( \frac{n^2}2) \Gamma( \frac{n+1}2)^n},\label{eq:defdn}
\end{align}
and $\Scal_{x_0}^{(M)}$ is the scalar curvature of $M$ at $x_0$.\\~\\
(ii) For every $n\ge 2$, $k>0$, we get when $M={\mathcal S}_{k}^n$
$$\E[N({\mathcal C}_{x_0,\lambda}^{(\sph_k^n)})]=\sigma_{n-1}\int_0^{\lambda^{\frac{1}{n}}\frac{\pi}{k}}g_\lambda^{(\sph_k^n)}(r,u)\mathrm{d}r$$
and when $M={\mathcal H}_k^n$
$$\E[N({\mathcal C}_{x_0,\lambda}^{(\mathcal{H}_k^n)})]=\sigma_{n-1}\int_0^{\infty}g_\lambda^{({\mathcal H}_k^n)}(r,u)\mathrm{d}r$$
where $u$ is fixed in $\sph^{n-1}$ and $g_\lambda^{(\sph_k^n)}(r,u)$ and $g_\lambda^{({\mathcal H}_k^n)}(r,u)$ are given by \eqref{eq:densitysphere} and \eqref{eq:densityhyperb} respectively.
\end{thm}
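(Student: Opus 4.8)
\emph{Proof strategy.} The plan is to express $\E[\Nl]$ by a Mecke-type formula, to transform the resulting integral by a change of variables of Blaschke--Petkantschin type adapted to the Riemannian setting, and then to read off both terms of the expansion by expanding, via Jacobi fields, the volume densities that appear; the constant-curvature formulas of part~(ii) drop out of the same computation once the curvature is frozen. \emph{Step~1 (Mecke reduction).} A point $z\in M$ is a vertex of $\Cl$ precisely when $\Pl$ contains $n$ points $x_1,\dots,x_n$ such that $z$ is equidistant from $x_0,x_1,\dots,x_n$ and the open geodesic ball $B(z,r)$, with $r:=d^{(M)}(z,x_0)$, contains no point of $\Pl$; by $(\mathrm A_3)$ and $(\mathrm A_4)$ the circumcenters of a fixed $(n+1)$-tuple of distinct points are finite in number, with at most one of circumradius below a fixed threshold. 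The multivariate Mecke equation for $\Pl$ then gives
\[
\E[\Nl]=\frac{\lambda^{n}}{n!}\int_{M^{n}}\ \sum_{z}\ e^{-\lambda\,\vol^{(M)}(B(z,d^{(M)}(z,x_0)))}\ \dd x_1\cdots\dd x_n ,
\]
the inner sum ranging over the circumcenters $z$ of $(x_0,x_1,\dots,x_n)$.

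\emph{Step~2 (Blaschke--Petkantschin change of variables).} The core of the method is a Riemannian analogue of the Blaschke--Petkantschin formula, which has to be established for this purpose and which allows one to replace $(x_1,\dots,x_n)$ by the circumcenter $z\in M$ and the unit directions $u_1,\dots,u_n\in\sph^{n-1}\subset T_zM$ through $x_i=\exp_z(r\,u_i)$, $r=d^{(M)}(z,x_0)$; it turns the integral above into an integral over $z$ and $(u_1,\dots,u_n)$ against a Jacobian whose leading part, for $M=\R^{n}$, is the classical Euclidean Blaschke--Petkantschin density (proportional to $r^{\,n^{2}-n}$ times a weight on the mutual directions). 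Writing moreover $z=\exp_{x_0}(r\,w)$, $w\in\sph^{n-1}\subset T_{x_0}M$, so that $\dd z$ contributes the Jacobian $j_{x_0}(rw)$ of $\exp_{x_0}$ times $r^{\,n-1}\,\dd r\,\dd w$, the $r$-powers combine to $r^{\,n^{2}-1}$; after rescaling $r=\lambda^{-1/n}s$ the prefactor $\frac{\lambda^{n}}{n!}\,r^{\,n^{2}-1}\,\dd r$ becomes $\frac{1}{n!}\,s^{\,n^{2}-1}\,\dd s$, so that the integral is now of order one and its leading term reproduces verbatim the Euclidean computation, hence the constant $e_n$ of~\eqref{eq:Nmoyeuc}.

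\emph{Step~3 (second-order term).} At fixed $s$ (so $r=\lambda^{-1/n}s$) there are three sources of curvature correction, each of order $r^{2}=O(\lambda^{-2/n})$: $(a)$ in the exponential weight, $\vol^{(M)}(B(z,r))=\kappa_n r^{n}\bigl(1-\tfrac{\Scal_z^{(M)}}{6(n+2)}\,r^{2}+O(r^{3})\bigr)$, where $\kappa_n$ is the volume of the unit Euclidean ball; $(b)$ in the measure $\dd z$, the density $j_{x_0}(rw)=1-\tfrac16\Ric_{x_0}^{(M)}(w,w)\,r^{2}+O(r^{3})$ of the exponential chart at $x_0$; and $(c)$ the analogous curvature term, at $z$, in the Riemannian Blaschke--Petkantschin Jacobian. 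Since $\Scal_z^{(M)}=\Scal_{x_0}^{(M)}+O(r)$ and $\Ric_z^{(M)}=\Ric_{x_0}^{(M)}+O(r)$, all curvatures may be frozen at $x_0$ to the required order; integrating the Ricci contributions over the direction variables with $\int_{\sph^{n-1}}\Ric_{x_0}^{(M)}(u,u)\,\dd u=\tfrac{\sigma_{n-1}}{n}\Scal_{x_0}^{(M)}$ turns every correction into a scalar multiple of $\Scal_{x_0}^{(M)}\lambda^{-2/n}$, and summing the resulting products of Gamma factors yields exactly the coefficient $d_n$ of~\eqref{eq:defdn}. The error is controlled by dominated convergence, the domination — and the convergence of the $s$-integral, which carries the weight $e^{-\kappa_n s^{n}}$ — being uniform by two-sided volume comparison under $(\mathrm A_1)$--$(\mathrm A_2)$; the contribution of the region where $r$ is not small, i.e.\ where $\Cl$ is not concentrated near $x_0$, is shown to be $o(\lambda^{-2/n})$ (in fact exponentially small in $\lambda$) because there $\vol^{(M)}(B(z,r))$ is uniformly bounded below, while $(\mathrm A_3)$--$(\mathrm A_4)$ keep the number of circumcenters under control.

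\emph{Step~4 (constant curvature) and main obstacle.} When $M=\sph_k^n$ or $M=\mathcal H_k^n$, the manifold is homogeneous, the Poisson--Voronoi tessellation of $\Pl$ is invariant under the isometry group, and $\Cl$ is its typical cell; Steps~1--2 can then be carried out with every quantity explicit, since the normal Jacobi fields are $\tfrac1k\sin(kt)$ (resp.\ $\tfrac1k\sinh(kt)$), whence $\vol^{(M)}(B(z,r))=\sigma_{n-1}\int_0^{r}\bigl(\tfrac1k\sin(kt)\bigr)^{n-1}\dd t$ (resp.\ with $\sinh$) and $j_{x_0}$ is rotationally symmetric, so that all angular integrals reduce to functions of $r$ alone. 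What remains is a single integral over the circumradius, giving $\sigma_{n-1}\int g_\lambda^{(\cdot)}(r,u)\,\dd r$ with $g_\lambda^{(\sph_k^n)}$ and $g_\lambda^{(\mathcal H_k^n)}$ as in~\eqref{eq:densitysphere} and~\eqref{eq:densityhyperb}, the range of integration being $(0,\lambda^{1/n}\pi/k)$ on the sphere (its injectivity radius after rescaling) and $(0,\infty)$ in the hyperbolic case; performing the $r$-integral for $n=2$ recovers \eqref{sph} and \eqref{eq:hyp} as a check. The genuine obstacles are the construction of the Riemannian Blaschke--Petkantschin formula together with the second-order expansion of its Jacobian — this is exactly where the delicate behaviour of Riemannian circumcenters, hence $(\mathrm A_3)$--$(\mathrm A_4)$, enters — and the bookkeeping that makes the three independent curvature corrections of Step~3 collapse into the single constant $d_n$; the uniform domination and the truncation estimate, while technical, are conceptually routine given $(\mathrm A_1)$--$(\mathrm A_2)$.
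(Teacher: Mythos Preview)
Your proposal is correct and follows essentially the same approach as the paper: Mecke--Slivnyak formula, then the Riemannian Blaschke--Petkantschin change of variables (the paper's Theorem~\ref{Jacob}), then expansion of the ball volume and of the Jacobian to second order in $r$, and finally integration over the directions using \eqref{eq:scalRicci} to collapse all Ricci contributions into $\Scal_{x_0}^{(M)}$. One organizational difference: you split the curvature correction of the Jacobian into a piece (b) coming from $\mathrm{d}z$ and a piece (c) coming from a ``BP-at-$z$'' Jacobian, whereas the paper treats $\Phi_{x_0}^{(M)}:(r,u,u_1,\dots,u_n)\mapsto(x_1,\dots,x_n)$ as a single map and computes its Jacobian directly---this is slightly more delicate than your phrasing suggests, because $r$ appears simultaneously in the base point $\exp_{x_0}(ru)$ and in the argument $ru_i$ of the second exponential, so the partial derivative in $r$ requires Lemma~\ref{lem:exodocarmo} in full strength rather than a simple product rule; the outcome is the term $L_{x_0}^{(M)}(u,u_1,\dots,u_n)=\Ric_{x_0}^{(M)}(u)+\sum_i\Ric_{x_0}^{(M)}(u_i)$, which is exactly your (b)$+$(c). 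For the error control the paper does not invoke dominated convergence but instead sandwiches $I_\lambda$ between explicit integrals $I_{\lambda,\pm}$ after truncating at circumradius $r_\lambda=\lambda^{-\frac{n+1}{n(n+2)}}$; it also devotes a separate section to reducing to the compact case, which is what makes the uniform estimates (Proposition~\ref{prop:jacobunif}, Lemma~\ref{lem:volunif}~(ii)) available---these are the ``routine'' points you allude to, but they do require some care.
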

The first values of the constants $e_n$ and $d_n$ are $e_2=6$, $e_3=\frac{96\pi^2}{35}$, $e_4=\frac{1430}{9}$ and $d_2=\frac{3}{2\pi}$, $d_3=\frac{12\cdot3^{\frac{2}{3}}\pi^{\frac{4}{3}}\Gamma\left(\frac{11}{3}\right)}{175\cdot 2^{\frac{1}{3}}}$, $d_4=\frac{25025}{864\sqrt{2\pi}}$. In particular, the asymptotic expansion \eqref{th1} shows that $\E[\Nl] $ converges to the constant $e_n$ which is naturally consistent with the Euclidean case \eqref{eq:Nmoyeuc}. Moreover, since the scalar curvature is twice the Gaussian curvature in dimension two, it is also consistent with the exact values \eqref{sph} and \eqref{eq:hyp} obtained in the respective cases of the two-dimensional sphere and of the hyperbolic plane. 

One of the key tools for proving \eqref{th1} is an extension of a renowned spherical change of variables formula of Blaschke-Petkantschin type, proved in the Euclidean space \cite[Chapter 7]{Wei08} and in the case of the sphere \cite{Mil71bis}. We calculate in Theorem \ref{Jacob} an asymptotic expansion of the corresponding Jacobian in the case of a general Riemannian manifold. This result is a close companion to a previous similar formula contained in \cite{AurelieNote} though the two underlying transformations are different. 

As expected, the (scalar) curvature only appears from the second term on in \eqref{th1}. This expansion will be the basis for the construction of an estimator of the scalar curvature which satisfies limit theorems, see \cite{CCElimite}. Nevertheless, the asymptotic expansion of $\E[\Nl]$ does not capture the possible anisotropy of the metric, which implies that
its mere knowledge is not enough to recover the metric for a manifold of dimension $n\ge 3$. This suggests that it will be necessary to study the set of vertices of $\Cl$ in a fixed direction in order to get the Ricci curvature.

Let us introduce the point process $\mathcal{V}_{x_0,\lambda}^{(M)}$ of normalized vertices of $\Cl$ as follows:
\[ \mathcal{V}_{x_0,\lambda}^{(M)}= \{ (r,u) \in \R_+ \times \sph^{n-1} : \exp_{x_0}( \lambda^{-\frac 1n}r u) \text{ is a vertex of }\Cl \} \] 
The renormalization factor $\lambda^{-\frac 1n}$ is due to the fact that the volume of $\Cl$ is of order $\lambda^{-1}$ by \eqref{eq:volasymp} and that its vertices are at a distance of order $\lambda^{-\frac1n}$ from $x_0$. In Theorem \ref{DENS} below, we provide an asymptotic expansion of the density of the intensity measure of the point process $\mathcal{V}_{x_0,\lambda}^{(M)}$ as well as explicit formulas in the particular cases $M={\mathcal S}_{k}^n$ and $M={\mathcal H}_{k}^n$.
\begin{thm}\label{DENS}
(i) 
When $\lambda\to\infty$, the density denoted by $g_\lambda^{(M)}(r,u)$ of the intensity measure of the point process $\mathcal{V}_{x_0,\lambda}^{(M)}$ satisfies 
\begin{align*} 
& g_\lambda^{(M)}(r,u)\\&=e^{-\kappa_n r^n} \left[ a_n r^{n^2-1} - \left(\frac{1}{6}\left(a_n\Ric_{x_0}^{(M)}(u) - n\Delta_{\Ric,n}(u)\right) r^{n^2+1}+ b_n{ \Scal_{x_0}^{(M)}}r^{n^2+n+1}\right)\frac{1}{\lambda^{\frac 2n}} + o\left( \frac{1}{\lambda^{ \frac 2n}} \right)\right] \end{align*}
where $\kappa_n$ is the Euclidean volume of the $d$-dimensional unit ball, i.e. $ \kappa_n=\frac{2 \pi^{\frac n2}}{n \Gamma\left(\frac n2 \right)}$,
\begin{align*}
a_n &= \frac{2^n \pi^{\frac{n^2-1}2} \Gamma\left(\frac{n^2+1}2\right) \Gamma\left( \frac n2 \right)}{ n!\Gamma\left(\frac{n+1}2\right)^n \Gamma\left( \frac{n^2}2 \right)},~~ b_n =  \frac{2^{n} \Gamma\left( \frac{n^2+1}2\right) \pi^{\frac{n^2+n-1}2}}{3n! n(n+2)\Gamma\left( \frac{n^2}2 \right) \Gamma\left( \frac{n+1}2\right)^n}, 
\end{align*}
$\Ric_{x_0}^{(M)}(u)$ denotes the Ricci curvature of $M$ at $x_0$ in direction $u$
and \[ \Delta_{\Ric,n}(u) = \int_{u_1,\dots u_n \in {\mathcal S}^{n-1}}\Delta(u,u_1,\dots,u_n)\Ric_{x_0}^{(M)}(u_1) \dvol^{({\mathcal S}^{n-1})}(u_1) \dots \dvol^{({\mathcal S}^{n-1})}(u_n), \] 
with $\Delta(u,u_1,\dots,u_n)$ as the Euclidean volume of the simplex spanned by $-u,u_1,\dots,u_n$.\\~\\
(ii) For every $n\ge 2$, $k>0$, we get when $M={\mathcal S}_{k}^n$  and $r\le \lambda^{\frac 1n}\frac{\pi}{k}$
\begin{equation}
  \label{eq:densitysphere}
 g_\lambda^{({\mathcal S}_{k}^n)}(r,u)=a_n\bigg(e^{-\sigma_{n-1} \displaystyle\int_{0}^{r}{\mathfrak{s}}_{k\lambda^{-\frac 1n}}^{n-1}(t) \mathrm{d}t } +e^{- \sigma_{n-1} \displaystyle\int_{r}^{\lambda^{\frac 1n} \frac{\pi}{k}}
{\mathfrak{s}}_{k\lambda^{-\frac 1n}}^{n-1}(t) \mathrm{d}t }\bigg)
{\mathfrak{s}}_{k\lambda^{-\frac 1n}}^{n^2-1}(r)  
\end{equation}
and when $M={\mathcal H}_{k}^n$ and $r>0$
 \begin{equation}
   \label{eq:densityhyperb}
 g_\lambda^{({\mathcal H}_{k}^n)}(r,u)=a_ne^{-\sigma_{n-1} \displaystyle\int_{0}^{r}{\mathfrak{s}}_{-k\lambda^{-\frac 1n}}^{n-1}(t) \mathrm{d}t }
{\mathfrak{s}}_{-k\lambda^{-\frac 1n}}^{n^2-1}(r),  
 \end{equation}
where ${\mathfrak{s}}_{\alpha}(t)=\begin{cases} 
                        \frac{\sin(\alpha t)}{\alpha} &\text{ if }\alpha>0 \\
												t & \text{ if } \alpha=0 \\
												\frac{\sinh((-\alpha)t)}{-\alpha} &\text{ if }\alpha<0 
											\end{cases}. $
\end{thm}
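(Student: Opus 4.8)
The plan is to compute the intensity measure of $\V^{(M)}$ directly from the Mecke–Slivnyak formula applied to the Poisson process $\Pl$. A point $\exp_{x_0}(\lambda^{-1/n} r u)$ is a vertex of $\Cl$ precisely when there exist $n$ nuclei $x_1,\dots,x_n\in\Pl$ such that the geodesic ball $B$ centered at this point and passing through $x_0,x_1,\dots,x_n$ contains no other nucleus of $\Pl$ in its interior; by assumptions $(\mathrm{A}_3)$–$(\mathrm{A}_4)$ this configuration is, at the relevant scale, unambiguous. Hence by the multivariate Mecke formula,
\begin{equation*}
\E\big[\#\{\text{vertices of }\Cl\text{ in }A\}\big] = \frac{\lambda^n}{n!}\int_{M^n} \mathbf{1}_{(r,u)\in A}\, e^{-\lambda\,\vol^{(M)}(B(x_0,x_1,\dots,x_n))}\, \dvol^{(M)}(x_1)\cdots\dvol^{(M)}(x_n),
\end{equation*}
where $B(x_0,\dots,x_n)$ is the circumscribed ball and $(r,u)$ is determined by its center via $\exp_{x_0}$. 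First I would change variables from $(x_1,\dots,x_n)$ to (center, radius, directions of the $x_i$ seen from the center), i.e. apply the Riemannian Blaschke–Petkantschin formula whose Jacobian is estimated in Theorem \ref{Jacob}; this replaces $\dvol^{(M)}(x_1)\cdots\dvol^{(M)}(x_n)$ by a density against $\dd r\,\dd u$ times an integral over directions $u_1,\dots,u_n\in\sph^{n-1}$, and simultaneously produces the Euclidean simplex volume $\Delta(u,u_1,\dots,u_n)$ that appears in the statement.

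The core of the argument is then a two-term Taylor expansion, in the small parameter $\lambda^{-1/n}$, of every manifold-dependent quantity around its Euclidean value. Three ingredients enter: (a) the volume $\vol^{(M)}(B)$ of a small geodesic ball, for which the curvature expansion $\vol^{(M)}(B_\rho)=\kappa_n\rho^n\big(1-\tfrac{\Scal_{x_0}}{6(n+2)}\rho^2+o(\rho^2)\big)$ gives the factor $b_n\Scal_{x_0}r^{n^2+n+1}$ after insertion into $e^{-\lambda\vol(B)}=e^{-\kappa_n r^n}(1+\cdots)$ and integration; (b) the Jacobian expansion from Theorem \ref{Jacob}, which contributes a term proportional to $\Ric_{x_0}(u)$ along the ``vertex direction'' $u$; (c) the anisotropic correction coming from the positions of the $x_i$, which after integrating the directions $u_1,\dots,u_n$ against $\Ric_{x_0}(u_i)$ produces exactly $\Delta_{\Ric,n}(u)$. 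Collecting the $\Ric_{x_0}(u)$-terms from (b) and (a) — noting $\Scal=\int_{\sph^{n-1}}\Ric(\cdot)\,\dvol/\!\dots$ relates the isotropic average to the scalar curvature — and the $\Delta_{\Ric,n}(u)$-term from (c), and matching the leading constants $a_n$ against the known Euclidean computation \eqref{eq:Nmoyeuc} (so that $\sigma_{n-1}\int_0^\infty a_n r^{n^2-1}e^{-\kappa_n r^n}\dd r = e_n$), yields the claimed density. Part (ii) is obtained the same way but \emph{without} approximation: on $\sph_k^n$ and ${\mathcal H}_k^n$ the volume of a geodesic ball of radius $\rho$ is $\sigma_{n-1}\int_0^\rho \mathfrak{s}_{k}^{n-1}(t)\,\dd t$ exactly, the Jacobian factorizes through $\mathfrak{s}_{k\lambda^{-1/n}}^{n^2-1}(r)$, and on the sphere the circumscribed ball of a given simplex is complemented by its antipodal ball, producing the second exponential in \eqref{eq:densitysphere}; on ${\mathcal H}_k^n$ there is no such second solution, whence the single term in \eqref{eq:densityhyperb}.

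The main obstacle I anticipate is \emph{bookkeeping the second-order term cleanly}: the curvature enters through three different channels (ball volume, Blaschke–Petkantschin Jacobian, and the displacement of the $x_i$ from the tangent-space picture), each contributing $O(\lambda^{-2/n})$ pieces that mix $\Ric_{x_0}(u)$, $\Scal_{x_0}$, and the genuinely anisotropic $\Delta_{\Ric,n}(u)$; isolating which pieces recombine into the scalar curvature and which survive as the $\Delta_{\Ric,n}$ term requires care with the $\exp_{x_0}$-normalization and with the uniform integrability of the remainder (here $(\mathrm{A}_1)$, $(\mathrm{A}_2)$ supply the volume-comparison bounds that let us dominate the tail $r\to\infty$ and pass the $o(\cdot)$ through the integral). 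I would organize this by first writing the exact integral identity, then Taylor-expanding each factor to order two with an explicit controlled remainder, then performing the $(u_1,\dots,u_n)$-integration before the $r$-integration so that the constants $a_n,b_n$ and the functional $\Delta_{\Ric,n}$ emerge as definite integrals over $\sph^{n-1}$.
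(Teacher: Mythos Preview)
Your proposal is correct and follows essentially the same approach as the paper: apply Mecke--Slivnyak, perform the Blaschke--Petkantschin change of variables of Theorem~\ref{Jacob}, expand both the ball volume (Lemma~\ref{lem:volunif}) and the Jacobian to second order, and integrate out $u_1,\dots,u_n$; part~(ii) then follows from the exact Jacobians of Proposition~\ref{const} together with the two antipodal circumscribed balls on $\sph_k^n$ versus the unique one on $\mathcal{H}_k^n$. One small clarification: your ``three channels'' are really two, since the $\Ric_{x_0}(u)$ term and the $\Delta_{\Ric,n}(u)$ term both arise from the single Jacobian expansion via $L_{x_0}^{(M)}(u,u_1,\dots,u_n)=\Ric_{x_0}(u)+\sum_i \Ric_{x_0}(u_i)$, while the $\Scal_{x_0}$ term comes solely from the ball-volume expansion.
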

Again, the first term of the asymptotic expansion of $g_{\lambda}^{(M)}(r,u)$ is equal to $g_{\lambda}^{({\mathbb R}^n)}(r,u)$. The particular value $g_{\lambda}^{({\mathbb R}^n)}(r,u)$ is the well-known density of the circumscribed radius of the typical Poisson-Delaunay cell in the Euclidean space, see e.g. \cite[Proposition 4.3.1]{Mol94}.

The second term of the expansion in Theorem \ref{DENS} involves the Ricci curvature at $x_0$ in direction $u$ which provides information on the anisotropy of the manifold around $x_0$. Nevertheless, the quantity $\Delta_{\Ric,n}(u)$ appears at the same order so in practice, we are unfortunately not able to discriminate between them and deduce the Ricci curvature from the knowledge of the density.

We observe that the sectional curvatures are missing from the expansions contained in Theorems \ref{THM} and \ref{DENS}. The most natural way to find them consists in studying a section of the Poisson-Voronoi tessellation. When $M$ is the Euclidean space, such a section is the intersection of the Poisson-Voronoi tessellation with a linear subspace of dimension $s\le n$. Note that, in general, the resulting tessellation is not a Voronoi tessellation of the linear subspace \cite{chi96}. Nevertheless, it is still a stationary tessellation which makes it possible to define a typical cell of the sectional tessellation. Miles \cite{Mil84} provides explicit formulas for the mean $k$-dimensional content of the $k$-skeleton of this typical cell for $0\le k \le s$, see also \cite[Theorem 7.2]{Mol89}. 
On a general Riemannian manifold $M$, 
we need to define a different local notion of sectional Voronoi tessellation. Let $V_s$ be a linear subspace of dimension $s$ of the tangent space of M at $x_0$ denoted by $T_{x_0}M$. The image of $V_s$ by the exponential map at $x_0$, defined in Section 2, is a manifold of dimension $s$, denoted by $M_s$. We notice immediately that $M_s$ satisfies the two assumptions ($\mbox{A}_1$) and ($\mbox{A}_2$). We define the sectional tessellation as the intersection of the tessellation with the manifold $M_s$ 
and investigate the characteristics of the section of the cell $\Cl$, i.e. the set $\Cl\cap M_s$. Note that in the Euclidean case, this corresponds to the section of the typical cell which does not have the same distribution as the typical cell of the sectional tessellation. In Theorem \ref{volSec} below, we provide an asymptotic expansion at high intensity for the mean s-content $\vol^{(M_s)}(\Cl\cap M_s)$. 
\begin{thm}\label{volSec}
(i) Let us fix $1\le s \le n$ and let $V_s$ be a linear subspace of dimension $s$ of $T_{x_0}M$. When $\lambda\to\infty$, the mean volume of the section of $\Cl$ by $V_s$ satisfies
\[ \E[\vol^{(M_s)}(\Cl\cap M_s)] = v_{n,s}\frac{1}{\lambda^{\frac sn}}
+\left(u_{n,s}{\Scal_{x_0}^{(M)}}- w_{n,s}{\Scal_{x_0}^{(M_s)}} \right)\frac{1}{\lambda^{\frac{s+2}n}} + o\left(\frac{1}{\lambda^{\frac{s+2}n}}\right)\]
where
\begin{align*}
v_{n,s} &=\frac{2^{1-\frac{s}{n}}n^{\frac sn -1} \Gamma(\frac sn) \Gamma(\frac n2)^{\frac sn}}{\Gamma(\frac s2)},~~
u_{n,s} &=\frac{(s+2)n^{\frac{s+2}n-2}\Gamma(\frac{s+2}n)\Gamma(\frac n2)^{\frac{s+2}n}}{3\Gamma(\frac s2)(n+2)2^{\frac{s+2}n} \pi},~~
w_{n,s} =\frac{n^{\frac{s+2}n-1}\Gamma(\frac{s+2}n)\Gamma(\frac n2)^{\frac{s+2}n}}{3\Gamma(\frac s2)s2^{\frac{s+2}n} \pi}
\end{align*}
and $\Scal_{x_0}^{(M_s)}$ is the scalar curvature of $M_s$ at $x_0$.\\~\\
(ii) For every $n\ge 2$, $k>0$, we get when $M=\sph_k^{n}$
\[ \E[\vol^{(M_s)}({\mathcal C}_{x_0,\lambda}^{(\sph_k^n)}\cap M_s)] =\sigma_{s-1}\int_0^{\frac{\pi}{k}}e^{-\lambda \sigma_{n-1}\displaystyle\int_0^r \mathfrak{s}_{k}(t)\mathrm{d}t}\mathfrak{s}_{k}^{s-1}(r)\mathrm{d}r.\]
and  when $M={\mathcal H}_k^{n}$
\[ \E[\vol^{(M_s)}({\mathcal C}_{x_0,\lambda}^{(\mathcal{H}_k^n)}\cap M_s)] =\sigma_{s-1}\int_0^{\infty}e^{-\lambda \sigma_{n-1}\displaystyle\int_0^{r} \mathfrak{s}_{-k}(t)\mathrm{d}t}\mathfrak{s}_{-k}^{s-1}(r)\mathrm{d}r.\]
\end{thm}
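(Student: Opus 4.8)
The plan is to mimic the strategy used for the mean volume of the full cell (Theorem~\ref{thm:propvol}) and the mean number of vertices (Theorem~\ref{THM}), now carried out inside the $s$-dimensional submanifold $M_s$. First I would write the mean sectional volume as an integral over $M_s$ of the probability that a given point $y\in M_s$ belongs to $\Cl$. By definition $y\in\Cl$ if and only if no point of $\Pl$ lies in the open geodesic ball $B^{(M)}(y,d^{(M)}(x_0,y))$ of the \emph{ambient} manifold $M$ (not of $M_s$); since $\Pl$ is Poisson of intensity $\lambda\,\vol^{(M)}$, this probability is $\exp(-\lambda\,\vol^{(M)}(B^{(M)}(y,d^{(M)}(x_0,y))))$. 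Passing to geodesic polar coordinates in $M_s$ around $x_0$ via the exponential map, $y=\exp_{x_0}(t\,u)$ with $t\ge 0$ and $u\in\sph^{s-1}\subset V_s$, and writing $\rho=\lambda^{-1/n}r$ after the natural rescaling, the mean becomes
\[
\E[\vol^{(M_s)}(\Cl\cap M_s)]
=\int_{\sph^{s-1}}\int_0^{\infty}
e^{-\lambda\,\vol^{(M)}\!\big(B^{(M)}(\exp_{x_0}(\rho u),\rho)\big)}\,
J_s(\rho,u)\,\dd\rho\,\dvol^{(\sph^{s-1})}(u),
\]
where $J_s(\rho,u)$ is the Jacobian of the exponential map of $M_s$ (the $\mathfrak{s}$-type factor in the constant-curvature cases). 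This already yields part (ii) directly: on $\sph_k^n$ and ${\mathcal H}_k^n$ the ball $B^{(M)}(\exp_{x_0}(\rho u),\rho)$ has the same volume as $B^{(M)}(x_0,\rho)$ by homogeneity, namely $\sigma_{n-1}\int_0^\rho\mathfrak{s}_{\pm k}^{n-1}$, and $J_s(\rho,u)=\mathfrak{s}_{\pm k}^{s-1}(\rho)$, so the stated formulas follow after undoing the rescaling.

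For part (i) the work is in a second-order Taylor expansion, as $\rho\to 0$, of the two ingredients in the integrand. I would expand $\vol^{(M)}(B^{(M)}(\exp_{x_0}(\rho u),\rho))$: the leading term is the Euclidean $\kappa_n\rho^n$, and the curvature correction is $O(\rho^{n+2})$ and governed by the classical Gray--Vermi type expansion $\vol^{(M)}(B^{(M)}(z,\rho))=\kappa_n\rho^n\big(1-\frac{\Scal^{(M)}_z}{6(n+2)}\rho^2+O(\rho^4)\big)$, combined with a further expansion of $\Scal^{(M)}_{\exp_{x_0}(\rho u)}$ around $x_0$ (whose first-order term integrates to zero over $u$, so only $\Scal^{(M)}_{x_0}$ survives at this order). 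Simultaneously I would expand the Jacobian $J_s(\rho,u)$ of $\exp_{x_0}$ restricted to $V_s$; here the relevant curvature is the \emph{intrinsic} scalar curvature of $M_s$, giving $J_s(\rho,u)=\rho^{s-1}\big(1-\frac{\Ric^{(M_s)}_{x_0}(u)}{6}\rho^2+O(\rho^4)\big)$, and again the average over $u\in\sph^{s-1}$ of $\Ric^{(M_s)}_{x_0}(u)$ is proportional to $\Scal^{(M_s)}_{x_0}$. Plugging both expansions into $e^{-\lambda\kappa_n\rho^n(1+\dots)}J_s(\rho,u)$, rescaling $\rho=\lambda^{-1/n}r$ so that $\lambda\kappa_n\rho^n=\kappa_n r^n$, and collecting the $\lambda^{-s/n}$ and $\lambda^{-(s+2)/n}$ terms reduces everything to Gamma-function integrals of the form $\int_0^\infty e^{-\kappa_n r^n}r^{m}\,\dd r$; matching constants against the Euclidean value $v_{n,s}$ and the curvature coefficients pins down $v_{n,s}$, $u_{n,s}$, $w_{n,s}$ in closed form. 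One must also justify interchanging the limit with the integral and discarding the region $\rho$ not small: this is exactly where assumptions ($\mathrm{A}_1$)--($\mathrm{A}_2$) and the volume-comparison theorems are used, giving a uniform exponential Gaussian-type bound $\vol^{(M)}(B^{(M)}(\exp_{x_0}(\rho u),\rho))\ge c\rho^n$ that makes the tail negligible after rescaling, as in the proof of Theorem~\ref{thm:propvol}(i).

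\emph{Main obstacle.} I expect the delicate point to be the expansion of $\vol^{(M)}(B^{(M)}(\exp_{x_0}(\rho u),\rho))$ when the center of the ball is itself at distance $\rho$ from $x_0$: one cannot simply use the fixed-center Gray expansion at $x_0$, and the off-center displacement of size $\rho$ interacts with the radius $\rho$ at precisely the order $\rho^{n+2}$ that contributes to the second term. Handling this cleanly requires expanding the metric in normal coordinates at $x_0$ to second order and integrating over the shifted ball, tracking which curvature terms of $M$ survive the angular average over $u$ --- and being careful to distinguish $\Scal^{(M)}_{x_0}$ (an ambient quantity appearing through the volume of the ambient ball) from $\Scal^{(M_s)}_{x_0}$ (an intrinsic quantity appearing through the Jacobian of the exponential map of $M_s$), which is the source of the two separate curvature coefficients $u_{n,s}$ and $w_{n,s}$ in the statement. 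A secondary technical nuisance is verifying that the cross term between the $\rho^2$-correction in the exponent and the $\rho^2$-correction in the Jacobian is of higher order $\lambda^{-(s+4)/n}$ and hence does not contribute, which is immediate once the orders are laid out but should be stated.
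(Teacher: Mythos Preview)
Your proposal is correct and follows essentially the same route as the paper: Fubini gives
\[
\E[\vol^{(M_s)}(\Cl\cap M_s)] = \int_{M_s} e^{-\lambda\,\vol^{(M)}(\bo^{(M)}(x,d^{(M)}(x_0,x)))}\,\dvol^{(M_s)}(x),
\]
one passes to geodesic polar coordinates in $M_s$, expands the ambient ball volume and the intrinsic Jacobian to second order, and integrates; part (ii) is immediate from the explicit constant-curvature Jacobians. The splitting into a near-integral over $\bo^{(M)}(x_0,r_\lambda)$ with $r_\lambda=\lambda^{-\frac{n+1}{n(n+2)}}$ and a negligible far-integral is exactly the paper's device.

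One remark on what you flag as the ``main obstacle'': you are overcomplicating it. You do not need to expand the metric in normal coordinates at $x_0$ and integrate over a shifted ball. The paper's Lemma~\ref{lem:volunif}(i) already gives, \emph{uniformly} for $x$ in a small neighbourhood of $x_0$,
\[
\vol^{(M)}(\bo^{(M)}(x,r)) = \kappa_n r^n - \frac{\kappa_n\,\Scal_{x_0}^{(M)}}{6(n+2)}\,r^{n+2} + o(r^{n+2}),
\]
with $\Scal_{x_0}^{(M)}$ rather than $\Scal_{x}^{(M)}$. The reason is simply continuity of the scalar curvature: $\Scal_{x}^{(M)}=\Scal_{x_0}^{(M)}+O(d^{(M)}(x_0,x))$, and since the center $x=\exp_{x_0}(\rho u)$ is at distance $\rho$ from $x_0$, the discrepancy contributes at order $\rho^{n+3}$, which is swallowed by the $o(\rho^{n+2})$. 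So the off-center displacement does \emph{not} interact with the radius at the critical order $\rho^{n+2}$; the first-order variation of $\Scal^{(M)}$ along the geodesic (which you mention as integrating to zero over $u$) is already one order too high to matter. With this in hand the two curvature contributions decouple cleanly: $\Scal_{x_0}^{(M)}$ from the exponent, $\Scal_{x_0}^{(M_s)}$ from the Jacobian via $\int_{\sph^{s-1}}\Ric_{x_0}^{(M_s)}(u)\,\dvol^{(\sph^{s-1})}(u)=\frac{\sigma_{s-1}}{s}\Scal_{x_0}^{(M_s)}$, exactly as you outline.
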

The first term of the expansion, namely the term $v_{n,s}\lambda^{-\frac{s}{n}}$, is naturally equal to $\E[\vol^{({\mathbb R}^s)}({\mathcal C}_{x_0,\lambda}^{({\mathbb R}^n)}\cap {\mathbb R}^s)]$. 
 To the best of our knowledge, this is the first time that such mean characteristic of the Euclidean typical Poisson-Voronoi is calculated though it is certainly very close in spirit to the calculation of the mean $k$-th volume of the $k$-dimensional typical face of a stationary and isotropic Poisson hyperplane tessellation, see e.g. \cite[Theorem 10.3.3]{Wei08} and \cite{Sch09}.

Moreover, Theorem \ref{volSec} is consistent with \eqref{eq:volasymp} since $u_{n,n}=w_{n,n}$ and $v_{n,n}=1$. Surprisingly, while the expansion up to the second term of the mean volume of $\Cl$ provided at \eqref{eq:volasymp} is independent of the manifold $M$, a similar calculation for the section of $\Cl$ involves both the scalar and sectional curvatures at $x_0$. 

Theorem \ref{NSec} contains a similar asymptotic expansion for the mean number of vertices $N(\Cl\cap M_s)$ of $\Cl\cap M_s$.
\begin{thm}\label{NSec}
Let us assume that $M_s$ satisfies the two assumptions $(\mbox{A}_3)$ and $(\mbox{A}_4)$.\\~\\
(i) Let $1\le s \le n$ and let $V_s$ be a linear subspace of dimension $s$ of $T_{x_0}M$. When $\lambda\to\infty$, the mean number of vertices of the section of $\Cl$ by $V_s$ satisfies
\[ \E[N(\Cl\cap M_s)] = e_{n,s} +\left(f_{n,s}{\Scal_{x_0}^{(M)}} - g_{n,s} {\Scal_{x_0}^{(M_s)}} - h_{n,s} {\Delta_{\Ric,n,s}} \right)\frac{1}{\lambda^{\frac 2n}} + o\left( \frac{1}{\lambda^{\frac 2n}}\right)\]
where
\begin{align}
e_{n,s}&= \frac{ 2 \pi^{\frac{s}2} n^{s-1} \Gamma(\frac n2)^2 \Gamma\left( \frac 12(ns+n-s+1) \right)}{s\Gamma\left(\frac s2\right) \Gamma\left( \frac 12(ns+n-s) \right)\Gamma\left( \frac{n+1}2 \right)\Gamma\left( \frac{n-s+1}2 \right)} \label{eq:enssimp}\\
f_{n,s}&= \frac{ 2^{-\frac 2n} \pi^{\frac{s-1}2}(sn+2)\Gamma(s+\frac 2n) n^{s+\frac 2n-2}\Gamma(\frac n2)^{2+\frac 2n} \Gamma\left( \frac 12(ns+n-s+1) \right)}{3 (n+2)s! \Gamma\left(\frac s2\right)\Gamma\left( \frac 12(ns+n-s) \right)\Gamma\left( \frac{n+1}2 \right)\Gamma\left( \frac{n-s+1}2 \right) },\label{eq:fnssimp}\\
g_{n,s}&= \frac{\Gamma(s+\frac 2n) n^{s+\frac 2n-2}\Gamma(\frac n2)^{2+\frac 2n} \Gamma\left( \frac 12(ns+n-s+1) \right)}{3s!s(n+2)2^{\frac 2n-1}\pi\Gamma\left(\frac s2\right)^2 \Gamma\left( \frac 12(ns+n-s) \right)\Gamma\left( \frac{n+1}2 \right)\Gamma\left( \frac{n-s+1}2 \right)},\label{eq:gnssimp}\\
h_{n,s}&=\frac{ \Gamma(s+\frac 2n) n^{s+\frac 2n-2}\Gamma(\frac n2)^{s+\frac 2n}}{3 (n+2) 2^{s+\frac 2n+1} \pi^{\frac{sn}2+1}}\\ 
\Delta_{\Ric,n,s} &= \int_{u\in\sph^{s-1}} \int_{u_1,\dots,u_s \in \sph^{n-1}}  \Delta_s(u,u_1,\dots,u_s)\Ric(u_1)\dvol^{({\mathcal S}^{n-1})}(u_1)\dots \dvol^{({\mathcal S}^{n-1})}(u_s)\dvol^{({\mathcal S}^{s-1})}(u)\nonumber
\end{align}
 with $\Delta_s(u,u_1,\dots,u_s)$ being the $s$-dimensional volume of the simplex generated by $-u\in V_s$ and the projection of $u_1,\dots,u_s$ onto the subspace $V_s$.\\~\\
(ii) For every $n\ge 2$, $k>0$, we get when $M=\sph_k^n$
\begin{equation}
  \label{eq:densityspheresec}
\E[N({\mathcal C}_{x_0,\lambda}^{(\sph_k^n)}\cap M_s)] =\Delta_{n,s}\int_0^{\lambda^{\frac 1n}\frac{\pi}{k}}\bigg(e^{-\sigma_{n-1} \displaystyle\int_{0}^{r}{\mathfrak{s}}_{k\lambda^{-\frac 1n}}^{n-1}(t) \mathrm{d}t } +e^{- \sigma_{n-1} \displaystyle\int_{r}^{\lambda^{\frac 1n} \frac{\pi}{k}}
{\mathfrak{s}}_{k\lambda^{-\frac 1n}}^{n-1}(t) \mathrm{d}t }\bigg)
{\mathfrak{s}}_{k\lambda^{-\frac 1n}}^{sn-1}(r)\mathrm{d}r  
\end{equation}
and when $M={\mathcal H}_k^n$
 \begin{equation}
   \label{eq:densityhyperbsec}
\E[N({\mathcal C}_{x_0,\lambda}^{(\mathcal{H}_k^n)}\cap M_s)] =\Delta_{n,s}\int_0^{\infty}e^{-\sigma_{n-1} \displaystyle\int_{0}^{r}{\mathfrak{s}}_{-k\lambda^{-\frac 1n}}^{n-1}(t) \mathrm{d}t }
{\mathfrak{s}}_{-k\lambda^{-\frac 1n}}^{sn-1}(r)\mathrm{d}r
\end{equation}
where $$\Delta_{n,s}=\frac{ 2^{s+1} \pi^{\frac{s+ns}2} \Gamma\left( \frac 12(ns+n-s+1) \right)}{s!\Gamma\left(\frac s2\right)\Gamma\left( \frac n2 \right)^{s-2} \Gamma\left( \frac 12(ns+n-s) \right)\Gamma\left( \frac{n+1}2 \right)\Gamma\left( \frac{n-s+1}2 \right) }.$$
\end{thm}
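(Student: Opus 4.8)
The plan is to combine the Blaschke--Petkantschin-type change of variables formula of Theorem~\ref{Jacob} with the Mecke--Slivnyak formula exactly as in the proof of Theorem~\ref{THM}, but now restricting the vertices to those lying in the submanifold $M_s$. A vertex of $\Cl\cap M_s$ arises from a configuration of $s$ points $x_1,\dots,x_s$ of $\Pl$ together with $x_0$, whose circumscribed sphere in $M_s$ is empty of other points of $\Pl$ on the $M_s$-side; equivalently, it is a point $z\in M_s$ equidistant in $M$ from $x_0,x_1,\dots,x_s$ such that the geodesic ball of $M$ of radius $d^{(M)}(x_0,z)$ centred at $z$ contains no point of $\Pl$. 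First I would write $\E[N(\Cl\cap M_s)]$ via Mecke's formula as an integral over $(x_1,\dots,x_s)\in M^s$ of the product of $\lambda^s$, the geometric condition that the associated empty ball exists, and the void probability $\exp(-\lambda\vol^{(M)}(B))$ where $B$ is that ball. Then I would apply the Riemannian Blaschke--Petkantschin change of variables, parametrising instead by the centre $z$ (equivalently by $(r,u)$ with $z=\exp_{x_0}(\lambda^{-1/n}ru)$, $u\in\sph^{s-1}\subset V_s$) and by the $s$ unit directions $u_1,\dots,u_s\in\sph^{n-1}\subset T_{x_0}M$ pointing from $z$ towards $x_1,\dots,x_s$, picking up the Jacobian whose asymptotics are given by Theorem~\ref{Jacob}, together with the factor $\Delta_s(u,u_1,\dots,u_s)$ coming from projecting the $u_i$ onto $V_s$ (this is the analogue of the simplex-volume factor in the classical formula, the novelty being that the points $x_i$ live in the ambient $M$ while the centre is constrained to $M_s$).

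Once the integral is in $(r,u,u_1,\dots,u_s)$ form, the structure is the same as in Theorem~\ref{DENS}: the void probability expands as $\exp(-\kappa_n r^n)(1+O(\lambda^{-2/n}))$ where the correction involves the volume-growth expansion of geodesic balls of $M$, hence $\Ric_{x_0}^{(M)}$; the Jacobian expansion of Theorem~\ref{Jacob} contributes another $\lambda^{-2/n}$ term involving $\Scal_{x_0}^{(M)}$; and, crucially, the constraint that $z\in M_s$ and that distances are measured along $M_s$ forces the induced metric on $M_s$ to enter, producing the $\Scal_{x_0}^{(M_s)}$ term. I would collect the zeroth-order term, do the $r$-integral (a Gamma integral after the substitution $t=\kappa_n r^n$) and the angular integrals over $\sph^{s-1}$ and $(\sph^{n-1})^s$ of $\Delta_s$, and check that the resulting constant is $e_{n,s}$ as in \eqref{eq:enssimp}; this is the same computation Miles performs, cross-checked against the known Euclidean sectional formulas. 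For the second-order term I would separate the three contributions into the coefficients $f_{n,s}$ (from the ambient curvature through the Jacobian and the void probability), $g_{n,s}$ (from the intrinsic curvature of $M_s$ controlling the $M_s$-volume element along which $r$ and $u$ vary), and $h_{n,s}$ multiplying $\Delta_{\Ric,n,s}$ (the anisotropic part of the void-probability expansion that does not average out because the $u_i$ are weighted by $\Delta_s$), again doing Gamma-type $r$-integrals; matching these against \eqref{eq:fnssimp}--\eqref{eq:gnssimp} and the displayed formula for $h_{n,s}$ is routine but lengthy. Part~(ii) is then immediate: on $\sph_k^n$ and $\mathcal H_k^n$ the void probability and the Jacobian are exactly computable in terms of $\mathfrak s_{\pm k}$ by the constant-curvature Jacobi field formulas, one integrates out the $u,u_i$ variables to get the constant $\Delta_{n,s}$, and one is left with the stated one-dimensional integrals \eqref{eq:densityspheresec}--\eqref{eq:densityhyperbsec}; consistency with Theorem~\ref{DENS} when $s=n$ is a sanity check.

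The main obstacle I expect is twofold. First, there is an analytic control issue: the Mecke integral is over all of $M^s$, but the asymptotic expansions of the Jacobian and the void probability are only valid for configurations at distance $O(\lambda^{-1/n})$ from $x_0$; one must show, using assumptions $(\mathrm A_1)$--$(\mathrm A_4)$ and the comparison theorems for volume growth, that the contribution of far-away or degenerate configurations (and, for vertices near the boundary of the injectivity domain) is $o(\lambda^{-2/n})$ after renormalisation, and that the number of circumscribed spheres stays bounded so that no configuration is overcounted --- this is exactly where $(\mathrm A_3)$ and $(\mathrm A_4)$, now assumed for $M_s$ as well, are used. Second, and more subtly, the correct bookkeeping of the two independent curvature effects --- the ambient $\Scal_{x_0}^{(M)}$ entering through the Jacobi fields of $M$ and the intrinsic $\Scal_{x_0}^{(M_s)}$ entering through the volume element of $M_s$ --- requires care, because $M_s=\exp_{x_0}(V_s)$ is a totally geodesic submanifold only to second order at $x_0$, so its second fundamental form vanishes at $x_0$ but its derivatives do not; one must verify that these higher-order deviations contribute only to the $o(\lambda^{-2/n})$ remainder and that the $\lambda^{-2/n}$ coefficient genuinely splits as $f_{n,s}\Scal_{x_0}^{(M)}-g_{n,s}\Scal_{x_0}^{(M_s)}-h_{n,s}\Delta_{\Ric,n,s}$ with no cross terms. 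Everything else is a matter of carefully iterating the technique already deployed for Theorems~\ref{THM}, \ref{DENS} and \ref{volSec}.
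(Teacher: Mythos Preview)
Your approach is essentially the paper's: Mecke--Slivnyak, then a sectional Blaschke--Petkantschin change of variables $\Phi_{x_0}^s:(r,u,u_1,\dots,u_s)\mapsto(x_1,\dots,x_s)$ with $u\in V_s\cap\sph^{s-1}$ and $u_i\in\sph^{n-1}$, whose Jacobian expands as
\[
\tilde{\J}_{x_0}^s(r,u,u_1,\dots,u_s)=s!\,\Delta_s(u,u_1,\dots,u_s)\Bigl(r^{sn-1}-\tfrac{1}{6}L_{x_0}^s(u,u_1,\dots,u_s)\,r^{sn+1}+o(r^{sn+1})\Bigr),
\]
with $L_{x_0}^s=\Ric_{x_0}^{(M_s)}(u)+\sum_{i=1}^s\Ric_{x_0}^{(M)}(u_i)$; then the same splitting into $r\le r_\lambda$ and $r>r_\lambda$, the substitution $y=\lambda\kappa_n r^n$, and angular integration. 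Part~(ii) is handled exactly as you say.

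One point of bookkeeping is inverted in your outline and would cause confusion if you carried it through literally. The void probability $e^{-\lambda\vol^{(M)}(\bo^{(M)}(z,r))}$ expands via Lemma~\ref{lem:volunif}, and the correction there is the \emph{scalar} curvature $\Scal_{x_0}^{(M)}$, not a Ricci curvature; this is the sole source of the $f_{n,s}\Scal_{x_0}^{(M)}$ term. Conversely, the Jacobian $\tilde{\J}_{x_0}^s$ is what carries the Ricci curvatures: its $\Ric_{x_0}^{(M_s)}(u)$ piece, integrated over $u\in\sph^{s-1}$, yields $\Scal_{x_0}^{(M_s)}$ and hence $g_{n,s}$; its $\sum_i\Ric_{x_0}^{(M)}(u_i)$ piece, integrated against $\Delta_s$ over $(\sph^{n-1})^s$, is precisely what produces $\Delta_{\Ric,n,s}$ and hence $h_{n,s}$. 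The anisotropic term does \emph{not} come from the void probability, which is isotropic at this order. Your worry about the second fundamental form of $M_s$ is unnecessary: the Jacobian calculation is intrinsic to the map $\Phi_{x_0}^s$ and only sees $\Ric_{x_0}^{(M_s)}$ through the $u$-block (exactly as in the proof of Theorem~\ref{Jacob}), with no extrinsic correction at order $r^{sn+1}$.
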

The limit $e_{n,s}$ of $\E[N(\Cl\cap M_s)]$ is naturally equal to $\E[N({\mathcal C}_{x_0,\lambda}^{({\mathbb R^n})}\cap M_s)]$. To the best of our knowledge, the explicit value of $e_{n,s}$ is new, as well as the formulas \eqref{eq:densityspheresec} and \eqref{eq:densityhyperbsec} in the particular case of constant curvature.
Compared to Theorem \ref{thm:propvol}, Theorems \ref{THM} and \ref{DENS}, the statements contained in Theorems \ref{volSec} and \ref{NSec} involve all the local characteristics of the metric around $x_0$. In the particular case $s=2$, the curvature $\Scal_{x_0}^{(M_2)}$ is indeed twice the sectional curvature of $M$ at $x_0$ with respect to the plane $L_2$. This observation is in a way completely satisfying in regard of our initial purpose of recovering the local geometry of $M$ from the properties of the Poisson-Voronoi tessellation. Let us note, however, that their usefulness in practice is questionable since the determination of the section of the tessellation already requires the knowledge of the metric.

So far, we focused on the link between the characteristics of the Poisson-Voronoi tessellation and the local geometry around a point. Now, a natural question arises: can we get global information on the geometry of the manifold? Since the mean number of vertices of the cell associated with an extra nucleus at $x_0$ involves the scalar curvature at $x_0$, one could imagine that the mean number of vertices in the whole tessellation is connected to the integral of the curvature on the whole manifold $M$ when $M$ is a compact set. This is indeed the case,  and this fact actually implies, in the case of a compact manifold of dimension 2 without boundary, a probabilistic proof of the Gauss-Bonnet theorem: 

\begin{cit}[Gauss-Bonnet]
For a compact surface without boundary $M$, 
\[ \chi(M) = \frac{1}{2\pi} \int_{M} K(x) \dM(x) \]
where $\chi(M)$ denotes the Euler characteristic of $M$ and $K(x)=\frac{1}{2}\Scal_{x_0}^{(M)}$ is the Gaussian curvature at $x\in M$.
\end{cit}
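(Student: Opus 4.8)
The plan is to evaluate the mean total number of vertices of the Poisson-Voronoi tessellation of $M$ in two ways and to compare the two asymptotic expansions. Let $\Pl$ be a Poisson point process of intensity $\lambda\vol^{(M)}$ on the compact surface $M$, and let $N_v(\lambda),N_e(\lambda),N_f(\lambda)$ be the (random) numbers of vertices, edges and cells of the tessellation it generates. On the \emph{local side}: every cell is the Voronoi cell of exactly one nucleus, so $N_f(\lambda)=\#\Pl$ and $\E[N_f(\lambda)]=\lambda\vol^{(M)}(M)$; almost surely no point of $M$ is equidistant from four points of $\Pl$, so by $(\mathrm{A}_3)$ and $(\mathrm{A}_4)$ every vertex of the tessellation lies in exactly one circumscribed ball and is the intersection of exactly three cells, whence $\sum_{x\in\Pl}N(\C^{(M)}(x,\Pl))=3N_v(\lambda)$ almost surely. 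Mecke's formula (mass transport) then gives
\[ 3\,\E[N_v(\lambda)] = \lambda\int_M \E\big[N(\C^{(M)}(x,\Pl\cup\{x\}))\big]\,\dM(x), \]
and, since the construction of $\Cl$ is intrinsic, Theorem \ref{THM}(i) applies with $x_0$ replaced by any $x\in M$. Taking $n=2$, $e_2=6$, $d_2=\tfrac{3}{2\pi}$ and $\Scal^{(M)}_x=2K(x)$, and using that the $o(\lambda^{-1})$ remainder is uniform in $x$ (the second-order constant depends continuously on a bounded number of derivatives of the metric, uniformly controlled on the compact $M$), one gets
\[ \E[N_v(\lambda)] = 2\lambda\,\vol^{(M)}(M)-\frac1\pi\int_M K(x)\,\dM(x)+o(1),\qquad\lambda\to\infty. \]

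On the \emph{topological side}, let $A_\lambda$ be the event that every cell has diameter smaller than a fixed $\delta$ chosen below both the injectivity radius and the convexity radius of $M$. On $A_\lambda$ every cell lies in a geodesically convex ball, hence is a closed topological disk, so the tessellation is a finite regular CW-decomposition of $M$ and Euler's relation $N_v-N_e+N_f=\chi(M)$ holds; together with $2N_e=3N_v$ (all vertices have degree three, a.s.) this yields $N_v=2N_f-2\chi(M)$ on $A_\lambda$. A cell of diameter $\ge\delta$ forces an empty open geodesic ball of radius $\ge\delta/2$, whose volume is $\gtrsim\delta^2$ by volume comparison under $(\mathrm{A}_1)$–$(\mathrm{A}_2)$; covering $M$ by $O(\delta^{-2})$ balls of radius $\delta/4$ gives $\proba(A_\lambda^c)\le C\delta^{-2}e^{-c\lambda\delta^2}$, which decays faster than any power of $\lambda$. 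Since $N_f(\lambda)$ and $N_v(\lambda)$ have polynomially bounded second moments in $\lambda$, Cauchy--Schwarz yields $\E[N_f(\lambda)\mathds{1}_{A_\lambda^c}]=o(1)$ and $\E[N_v(\lambda)\mathds{1}_{A_\lambda^c}]=o(1)$, whence
\[ \E[N_v(\lambda)]=2\,\E[N_f(\lambda)]-2\chi(M)+o(1)=2\lambda\,\vol^{(M)}(M)-2\chi(M)+o(1). \]

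Equating the two expansions of $\E[N_v(\lambda)]$, the diverging term $2\lambda\vol^{(M)}(M)$ cancels, leaving $-\tfrac1\pi\int_M K\,\dM+o(1)=-2\chi(M)+o(1)$; letting $\lambda\to\infty$ and noting that the difference of the two sides is a constant independent of $\lambda$ forces $\chi(M)=\tfrac1{2\pi}\int_M K(x)\,\dM(x)$. The combinatorial inputs — the identity $\sum_x N(\C^{(M)}(x,\Pl))=3N_v(\lambda)$ and Euler's relation — are immediate once the tessellation is a well-behaved CW complex, so the real obstacles are twofold: first, the uniformity in $x\in M$ of the remainder in Theorem \ref{THM}, which requires tracking the dependence of the second-order term on the jet of the metric; and second, showing that the defect event $A_\lambda^c$ is negligible against the $O(\lambda)$-order quantities $\E[N_v]$ and $\E[N_f]$, i.e. combining the super-polynomial decay of $\proba(A_\lambda^c)$ with polynomial second-moment bounds. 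I expect the second point, controlling $A_\lambda^c$, to be the main technical hurdle, although it is quite plausible given the standing assumptions $(\mathrm{A}_1)$–$(\mathrm{A}_4)$ and compactness.
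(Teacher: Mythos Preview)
Your proposal is correct and follows essentially the same route as the paper: compute $\E[V]$ via Mecke--Slivnyak and the two-term expansion of Theorem~\ref{THM} (with uniformity in $x$), compute it again via Euler's relation combined with $2E=3V$ and $\E[F]=\lambda\vol^{(M)}(M)$, and match the constant terms. The paper simply asserts that $\chi(S)=F-E+V$ and $2E=3V$ hold deterministically for the Voronoi graph and takes expectations directly, without introducing your good event $A_\lambda$; your conditioning on $A_\lambda$ and Cauchy--Schwarz argument make this step more honest but are not needed in the paper's presentation. The uniformity of the remainder in Theorem~\ref{THM} that you flag as the first obstacle is exactly what the paper isolates and proves as Proposition~\ref{prop:thmprincipalunif}.
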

There are naturally many classical proofs of this theorem.
They are in general rather technical, i.e. they rely notably on a triangulation of the surface and a clever application of Stokes' theorem in each triangle, see e.g. \cite[Chapter 9]{Lee06}. In our opinion, the proof that we provide in this paper is to some extent shorter and more elementary though it is based on an application of Theorem \ref{THM} in dimension 2. To the best of our knowledge, this argument is new though Leibon used a heuristic reasoning which led him to the intuition of the existence of such probabilistic proof \cite{Lei02}.

The paper is structured as follows. 
In Section \ref{sec:geom}, we introduce several fundamental tools from Riemannian geometry. Section \ref{sec:locdep} is devoted to showing that we can assume without loss of generality that $M$ is a compact Riemannian manifold. The calculation of $\E[\vol^{(M)}(\Cl)]$ and the proof of Theorem \ref{thm:propvol} take place in Section \ref{sec:vol}. Section \ref{sec:BP} is devoted to the statement and proof of Theorem \ref{Jacob} which is a new integral formula of Blaschke-Petkantschin type. In Section \ref{sec:vert}, we prove Theorems \ref{THM} and \ref{DENS} dealing with both the cardinality and density of the set $\mathcal{V}_{x_0,\lambda}^{(M)}$ of vertices of $\Cl$. We concentrate on the sectional tessellation in Section \ref{sec:sec} and prove
Theorems \ref{volSec} and \ref{NSec} therein. Finally, we postpone to Section \ref{sec:GB} the details for the probabilistic proof of the Gauss-Bonnet theorem which is deduced from Theorem \ref{THM}. 

This document is a comprehensive account of some of the results contained in a thesis manuscript \cite{ChaThese}. An abridged and more to the point version is to be submitted soon for publication.
\section{Geometric framework and preliminaries}
\label{sec:geom}
In this section, we introduce some useful notation and we survey several fundamental definitions and results from the theory of Riemannian geometry. For more details, we refer the reader to the reference books such as \cite{Do92}, \cite{Lee97} and \cite{Ber03}.
\paragraph{Exponential map.} Let $x_0 \in M$ and $v\in T_{x_0} M$. There exists a unique geodesic $\gamma_v$ such that $\gamma_v(0) =x_0$ and $\gamma'_v(0) =v$. Since $M$ is a complete Riemannian manifold, Hopf-Rinow theorem \cite[Theorem 52]{Ber03} guarantees that this geodesic is well defined on $[0,\infty)$. The exponential map of $v$ at $x_0$, denoted by $\exp_{x_0}(v)$ is then defined by the identity
\begin{equation}\label{exp}
\exp_{x_0}(v) = \gamma_v(1).
\end{equation}
For sake of simplicity, we omit in the notation of the exponential map the dependency on the manifold $M$ which should be implicit anyway. Intuitively, for a tangent vector
$v$, we get $\exp_{x_0}(v)$ by travelling on $M$ along the geodesic starting from $x_0$ in the direction given by $v$
over a length $\|v\|_{x_0}$. 
The radius of the largest open ball on which the exponential map is a diffeomorphism 
is called the injectivity radius that we denote by $R_{\mbox{\tiny inj}}$. The infimum of all injectivity radii over all $x_0\in M$ is called the global injectivity radius, assumed to be positive. For sake of simplicity, we will make a slight abuse of notation by calling $R_{\mbox{\tiny inj}}$ either the injectivity radius of a fixed point $x_0$ or the global injectivity radius. This allows us to define geodesic spherical coordinates of a point $x$ in a small neighbourhood of ${x_0}$ by $x=\exp_{x_0}(r u)$ with $r \in (0,R_{\mbox{\tiny inj}})$ and $u$ being a unit vector of $T_{x_0}M$.

\paragraph{Curvatures.} Let $\sigma_{x_0}$ be a plane of $T_{x_0} M$. The sectional curvature at ${x_0}$ with respect to $\sigma_{x_0}$, denoted by $K_{x_0}^{(M)}(\sigma_{x_0})$ is the Gaussian curvature at ${x_0}$ of the surface $\exp_{x_0}(\sigma_{x_0})$. For any vectors $u,v$ in $T_{x_0} M$ linearly independent, we write
$ K_{x_0}^{(M)}(u,v)=K_{x_0}^{(M)}(\sigma_{x_0})$, where $\sigma_{x_0}$ is the plane spanned by $u$ and $v$.  Sectional curvatures are particularly interesting because the knowledge of $K_{x_0}^{(M)}(\sigma_{x_0})$, for all $\sigma_{x_0}$, determines the metric at ${x_0}$ completely. The sectional curvature of $u$ and $v$ can also be defined through the identity 
\begin{equation}
  \label{eq:defseccurv}
 K_{x_0}^{(M)}(u,v)= \langle v, \mathcal{R}^{(M)}_{x_0}(u,v)u\rangle_{x_0}
\end{equation}
where $\mathcal{R}^{(M)}_{x_0}$ is the Riemann curvature tensor of $M$ at $x_0$. 

Let $u$ be a unit vector of $T_{x_0} M$ and let us extend it to an orthonormal basis $\{ u_1, \dots, u_{n-1},u \}$ of $T_{x_0} M$. The Ricci curvature of $M$ at $x_0$ in direction $u$, denoted by $\Ric_{x_0}^{(M)}(u)$ is defined by the identity
\begin{equation}\label{eq:defric}
 \Ric_{x_0}^{(M)}(u) = \sum_{i=1}^{n-1} K_{x_0}^{(M)}(u,u_i) 
\end{equation}
and the scalar curvature of $x_0$ is defined by 
\begin{equation}
  \label{eq:defscalaire}
\Scal_{x_0}^{(M)} = \sum_{i=1}^{n} \Ric_{x_0}^{(M)}(u_i)  
\end{equation}
with the convention $u_n=u$.
Note that $\Ric_{x_0}^{(M)}(u)$ and $\Scal_{x_0}^{(M)}$ do not depend on the choice of the basis.
Geometrically, the scalar curvature measures the volume defect between geodesic balls of small radius in $M$ and Euclidean balls with same radius. Let us denote by $\bo^{(M)}(x_0,r)$ the open geodesic ball in $M$ centered at $x_0$ and of radius $r>0$. We provide below a two-term expansion for the volume of $\bo^{(M)}(x_0,r)$ when $r$ is small, known as the Bertrand-Diquet-Puiseux theorem, see also \cite{Gra73} for subsequent terms:
\begin{equation}\label{vol}
\vol^{(M)}(\bo^{(M)}(x_0,r))= \kappa_n r^n - \frac{ \kappa_n \Scal_{x_0}^{(M)}}{6(n+2)} r^{n+2} + o(r^{n+2})
\end{equation}
where $\kappa_n = \frac{2 \pi^{\frac n2}}{n \Gamma\left(\frac n2 \right)}$ is the volume of the Euclidean $n$-dimensional unit-ball.
Similarly, the Ricci curvature in a direction measures the volume defects between small cones of $M$ in that direction with corresponding ones in the Euclidean space, see e.g. \cite{Tao08}. The following integral formula relates the Ricci curvature to the scalar curvature and is a continuous analogue of \eqref{eq:defscalaire}, see e.g. \cite[Exercise 9 p. 107]{Do92}:
\begin{equation} \label{eq:scalRicci}\Scal_{x_0}^{(M)} =\frac{n}{\vol^{({\mathcal S}^{n-1})}(\sph^{n-1})} \int_{u\in \sph^{n-1}} \Ric_{x_0}^{(M)}(u) \dvol^{(\sph^{n-1})}(u).\end{equation}

\paragraph{Jacobi fields.} A Jacobi field along a geodesic $\gamma$ is a vector field $J$ verifying the Jacobi equation 
\begin{equation}\label{eqJacobi} J''(t) = \mathcal{R}^{(M)}_{\gamma(t)}(\gamma'(t),J(t)) \gamma'(t) \end{equation}
where the derivative of $J$ is understood in the sense of the covariant derivative with respect to the Levi-Civita connection, see e.g. \cite[Chapter 5, \S 2]{Do92}. In particular, along $\gamma$, there exists a unique Jacobi field with given $J(0)$ and $J'(0)$. There are several ways to obtain Jacobi fields but in this paper, we only use the fact that they are connected to the derivative of the exponential map. We recall without proof the following general result which makes this connection more precise \cite[p. 119]{Do92}. This is a key tool of the proof of Theorem \ref{Jacob}.
\begin{lem}\label{lem:exodocarmo}
Let $\gamma$ be a geodesic, $\mathfrak{c}$ a curve on $M$ such that $\mathfrak{c}(0)=\gamma(0)$ and $V$ a vector field along $\mathfrak{c}$ such that $V(0)= \gamma'(0)$. Then the function $f(t,s) = \exp_{\mathfrak{c}(s)}(t V(s))$ satisfies
\begin{equation}\label{deriv}
\frac{ \partial f}{\partial s} (t,0) = J(t)
\end{equation}
where $J$ is the unique Jacobi field along $\gamma$ with $J(0)= \mathfrak{c}'(0)$ and $J'(0)=V'(0)$.   
\end{lem}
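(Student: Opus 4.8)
The plan is to recognize $f$ as a \emph{variation of $\gamma$ through geodesics} and to prove that the variation field $J(t):=\frac{\partial f}{\partial s}(t,0)$ is a Jacobi field along $\gamma$ with the prescribed initial data; the statement then follows from the uniqueness of the Jacobi field with given $J(0)$ and $J'(0)$ recalled after \eqref{eqJacobi}. First I would observe that, for each fixed $s$, the curve $t\mapsto f(t,s)=\exp_{\mathfrak{c}(s)}(tV(s))$ is, by the very definition \eqref{exp} of the exponential map, the geodesic issued from $\mathfrak{c}(s)$ with initial velocity $V(s)$. In particular $f(t,0)=\exp_{\gamma(0)}(t\gamma'(0))=\gamma(t)$, so $J$ is a vector field along $\gamma$, and since each $t$-curve is a geodesic one has the identity $\frac{D}{\partial t}\frac{\partial f}{\partial t}=0$ for all $(t,s)$, where $\frac{D}{\partial t}$ and $\frac{D}{\partial s}$ denote covariant differentiation along the respective parameter curves.

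Next I would differentiate this geodesic identity covariantly in $s$. Combining the commutation rule for the two parameter covariant derivatives through the curvature tensor with the symmetry (torsion-freeness) $\frac{D}{\partial s}\frac{\partial f}{\partial t}=\frac{D}{\partial t}\frac{\partial f}{\partial s}$, one gets
\[ 0=\frac{D}{\partial s}\frac{D}{\partial t}\frac{\partial f}{\partial t}=\frac{D}{\partial t}\frac{D}{\partial t}\frac{\partial f}{\partial s}+\mathcal{R}^{(M)}\!\left(\frac{\partial f}{\partial s},\frac{\partial f}{\partial t}\right)\frac{\partial f}{\partial t}. \]
Evaluating at $s=0$, where $\frac{\partial f}{\partial t}(t,0)=\gamma'(t)$ and $\frac{\partial f}{\partial s}(t,0)=J(t)$, and using the antisymmetry of $\mathcal{R}^{(M)}$ in its first two arguments, this collapses to $J''(t)=\mathcal{R}^{(M)}_{\gamma(t)}(\gamma'(t),J(t))\gamma'(t)$, which is precisely the Jacobi equation \eqref{eqJacobi}, once the sign convention of $\mathcal{R}^{(M)}$ is matched to the one fixed in \eqref{eq:defseccurv}.

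It remains to identify the initial data. For $J(0)$ I would use $f(0,s)=\exp_{\mathfrak{c}(s)}(0)=\mathfrak{c}(s)$, so that $J(0)=\frac{\partial f}{\partial s}(0,0)=\mathfrak{c}'(0)$. For $J'(0)$ I would again invoke the symmetry of the mixed covariant derivatives, writing $J'(0)=\frac{D}{\partial t}\frac{\partial f}{\partial s}(t,0)\big|_{t=0}=\frac{D}{\partial s}\frac{\partial f}{\partial t}(0,s)\big|_{s=0}$, and then note that $\frac{\partial f}{\partial t}(0,s)=V(s)$ is the initial velocity of the geodesic $t\mapsto f(t,s)$, so that this expression equals the covariant derivative $V'(0)$ of $V$ along $\mathfrak{c}$. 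By uniqueness of the Jacobi field with these initial values, $J$ agrees with the one in the statement. The only genuinely delicate points are the bookkeeping in the commutator-of-covariant-derivatives step and the correct matching of the curvature-tensor sign convention to \eqref{eqJacobi}; everything else is a direct unwinding of the definition of the exponential map.
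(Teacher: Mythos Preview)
Your proof is correct and is exactly the standard argument for this fact. Note however that the paper does not give its own proof of this lemma: it is explicitly ``recall[ed] without proof'' with a reference to \cite[p.~119]{Do92}, and your argument is essentially the one found there (variation through geodesics, derivation of the Jacobi equation via the commutator of covariant derivatives, and identification of the initial data via the symmetry $\frac{D}{\partial t}\frac{\partial f}{\partial s}=\frac{D}{\partial s}\frac{\partial f}{\partial t}$). There is nothing to compare on the paper's side; your write-up could serve as the omitted proof.
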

The Rauch comparison theorem yields bounds for the norm of a Jacobi field and as a consequence, we get the following expansion for any Jacobi field $J$ along $\gamma$ which satisties $J(0)=0$, see \cite[Chapter 5]{Do92}:
\begin{equation}\label{field} \|J(t)\|_{\gamma(t)} = t - \frac{K_{\gamma(0)}^{(M)}(J'(0),\gamma'(0))}6 t^3 +o(t^3). \end{equation}
Though we will not use this actual estimate in the sequel, we will prove similar expansions for  scalar products involving Jacobi fields, see in particular Lemma \ref{lem:expCBi}.
\paragraph{Parallel transport.} Let $V$ be a vector field along a curve $\gamma$. $V$ is called a parallel vector field if $V'(t) =0$ for all $t$, in the sense of the covariant derivative. Now, for any $u \in T_{\gamma(0)}$, there exists a unique parallel vector field $V$ along $\gamma$ such that $V(0)=u$ which is called parallel transport of $u$ along $\gamma$. In this paper, $V(t)$ will be denoted by $P_{\gamma(0)\to\gamma(t)}(u)$ and sometimes by $u(t)$ or even $u$ itself with a slight abuse of notation. Note that the parallel transport is a linear isomorphism from $T_{\gamma(0)}M$ to $T_{\gamma(t)}M$ which preserves the scalar product. 
\paragraph{Jacobian of the spherical change of variables.} Let us consider the following transformation into spherical coordinates:
\begin{equation}\label{eq:changeSpherical} \varphi_{x_0} : \left\{\begin{array}{rll}(0,R_{\mbox{\tiny inj}})\times \{u\in T_{x_0}M: \|u\|_{x_0}=1\}&\longrightarrow& M\\(r,u)&\longmapsto &\exp_{x_0}(ru)\end{array}\right.\end{equation}
and let us denote by $\J_{x_0}^{(M)}(r,u)$ the associated Jacobian determinant. In particular, the volume element $\dvol^{(M)}(x)$ satisfies 
\begin{equation}\label{eq:Jru}
\dvol^{(M)}(x) = |\J_{x_0}^{(M)}(r,u)| \dd r \dvol^{(\sph^{n-1})}(u).
\end{equation}
The following lemma shows the asymptotic expansion of $\J_{x_0}^{(M)}(r,u)$ when $r\to 0$ as well as a uniform lower bound for $r$ small enough.
\begin{lem}\label{lem:jacobchtsph}
(i) When $r\to 0$,
\[ \J_{x_0}^{(M)}(r,u) = r^{n-1} - \frac{ \Ric^{(M)}_{x_0}(u)}{6}r^{n+1} + o(r^{n+1}).\]
and $$\sup_{u\in T_{x_0}M, \|u\|_{x_0}=1}r^{-(n+1)}|\J_{x_0}^{(M)}(r,u) - r^{n-1} + \frac{ \Ric^{(M)}_{x_0}(u)}{6}r^{n+1}|\underset{r\to 0}{\to}0.$$
(ii) In the particular case when $M$ is a compact Riemannian manifold, we get the additional result
$$\sup_{x_0\in M}\;\sup_{u\in T_{x_0}M, \|u\|_{x_0}=1}r^{-(n+1)}|\J_{x_0}^{(M)}(r,u) - r^{n-1} + \frac{ \Ric^{(M)}_{x_0}(u)}{6}r^{n+1}|\underset{r\to 0}{\to}0.$$
(iii) For any Riemannian manifold $M$, there exists $r_0>0$ such that for every $x_0\in M$, $u\in T_{x_0}M$  of norm $1$  and $r\in [0,r_0]$, we get
\[\frac{1}{2}r^{n-1}\le \J_{x_0}^{(M)}(r,u)\le \frac{3}{2}r^{n-1}.\] 
\end{lem}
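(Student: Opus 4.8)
The plan is to compute the Jacobian $\J_{x_0}^{(M)}(r,u)$ explicitly in terms of Jacobi fields and then Taylor-expand. First I would fix $x_0$ and a unit vector $u\in T_{x_0}M$, and let $\gamma_u(t)=\exp_{x_0}(tu)$ be the unit-speed radial geodesic. Choose an orthonormal basis $u=e_n,e_1,\dots,e_{n-1}$ of $T_{x_0}M$ and, by Lemma \ref{lem:exodocarmo} applied to the curves $s\mapsto \exp_{x_0}(s e_i)$ on the unit sphere, identify the columns of the differential of $\varphi_{x_0}$ at $(r,u)$: the radial direction gives the parallel-transported vector $\gamma_u'(r)$, while the $(n-1)$ spherical directions give, at the point $\gamma_u(r)$, the Jacobi fields $J_i(t)$ along $\gamma_u$ with $J_i(0)=0$, $J_i'(0)=e_i$, evaluated at $t=r$, rescaled by $1/r$ (because the unit-sphere coordinate vector at $ru$ has length $r$, one must be careful with the chain rule). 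Since the radial vector is a unit vector orthogonal to all the $J_i(r)$ (Gauss's lemma), the Jacobian determinant is
\[
\J_{x_0}^{(M)}(r,u)=\frac{1}{r^{n-1}}\bigl|\det\bigl(\langle J_i(r),E_j(r)\rangle_{\gamma_u(r)}\bigr)_{1\le i,j\le n-1}\bigr|,
\]
where $E_j$ is the parallel transport of $e_j$ along $\gamma_u$; equivalently one writes $\J_{x_0}^{(M)}(r,u)=\det\!\bigl(d(\exp_{x_0})_{ru}\bigr)\,r^{n-1}$ using the standard fact that $d(\exp_{x_0})_{ru}(v)=\tfrac1r J_v(r)$.

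Next I would expand each $\langle J_i(r),E_j(r)\rangle$ for small $r$. Writing $J_i$ in the parallel frame $\{E_j\}$ and using the Jacobi equation \eqref{eqJacobi}, one has $J_i(t)=t\,e_i-\tfrac{t^3}{6}\mathcal R^{(M)}_{x_0}(e_i,u)u+o(t^3)$ in this frame (the standard third-order Taylor expansion of a Jacobi field vanishing at $0$), so that
\[
\langle J_i(r),E_j(r)\rangle_{\gamma_u(r)} = r\,\delta_{ij}-\frac{r^3}{6}\langle \mathcal R^{(M)}_{x_0}(e_i,u)u,e_j\rangle_{x_0}+o(r^3).
\]
Hence the $(n-1)\times(n-1)$ matrix is $r\bigl(I-\tfrac{r^2}{6}A+o(r^2)\bigr)$ with $A_{ij}=\langle \mathcal R^{(M)}_{x_0}(e_i,u)u,e_j\rangle$, whose determinant is $r^{n-1}\bigl(1-\tfrac{r^2}{6}\operatorname{tr}A+o(r^2)\bigr)$. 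By \eqref{eq:defseccurv} and \eqref{eq:defric}, $\operatorname{tr}A=\sum_{i=1}^{n-1}K_{x_0}^{(M)}(e_i,u)=\Ric^{(M)}_{x_0}(u)$, which gives the pointwise expansion in (i) after dividing by the extra $r^{n-1}$... wait, the normalization already accounts for it, so $\J_{x_0}^{(M)}(r,u)=r^{n-1}-\tfrac{\Ric^{(M)}_{x_0}(u)}{6}r^{n+1}+o(r^{n+1})$, as claimed. This also recovers \eqref{vol} after integrating over $u\in\sph^{n-1}$ and using \eqref{eq:scalRicci}.

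For the uniformity statements (the $\sup$ over $u$ in (i), the $\sup$ over $x_0$ and $u$ in (ii), and the two-sided bound in (iii)), I would argue that the error term $o(r^{n+1})$ comes from the remainder in Taylor's theorem applied to $t\mapsto J_i(t)$, and this remainder is controlled by a bound on $J_i''$ and $J_i'''$ on $[0,r_0]$, i.e. by bounds on the curvature tensor and its first covariant derivative along $\gamma_u$. Assumption ($\mbox{A}_1$) bounds the sectional curvatures, and on a compact manifold all of $\mathcal R^{(M)}$ and $\nabla\mathcal R^{(M)}$ are bounded; combined with Rauch's comparison theorem (as in \eqref{field}) to control $\|J_i\|$ itself, one gets a remainder estimate uniform in $x_0$ and $u$. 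This immediately yields (ii), and (i) is the special case where only $u$ varies. For (iii), the uniform expansion shows $r^{-(n-1)}\J_{x_0}^{(M)}(r,u)\to 1$ uniformly as $r\to0$, so there is $r_0>0$ making this ratio lie in $[\tfrac12,\tfrac32]$ for all $x_0,u$ and all $r\le r_0$; for a general (non-compact) $M$ one invokes ($\mbox{A}_1$)–($\mbox{A}_2$) plus standard comparison geometry to get the needed uniform curvature control rather than compactness.

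The main obstacle is the uniformity: making precise that the Taylor remainder for the Jacobi fields is genuinely uniform over the (non-compact) unit tangent bundle requires a careful comparison-geometry argument. For part (iii) on a general manifold one cannot simply invoke compactness, so the bounds must be extracted from ($\mbox{A}_1$) and ($\mbox{A}_2$) via Rauch-type estimates; keeping track of this is the only genuinely delicate point, the rest being the routine Jacobi-field computation sketched above.
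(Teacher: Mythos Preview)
Your proposal is correct and follows essentially the same route as the paper: express the Jacobian as the determinant of the Gram matrix $(\langle J_i(r),E_j(r)\rangle)_{i,j}$ of Jacobi fields in a parallel orthonormal frame, Taylor-expand the entries via the Jacobi equation, read off $\Ric_{x_0}^{(M)}(u)$ as the trace of the curvature operator, and obtain uniformity from bounds on the higher $r$-derivatives using compactness (parts (i)--(ii)) or $(\mbox{A}_1)$ together with Rauch comparison (part (iii)). Your first displayed formula carries a spurious factor $r^{-(n-1)}$ (the spherical columns of $d\varphi_{x_0}$ are already $J_i(r)$, not $\tfrac1r J_i(r)$), but you correctly self-correct two lines later; beyond that, your use of $\det(I-\tfrac{r^2}{6}A)=1-\tfrac{r^2}{6}\operatorname{tr}A+o(r^2)$ is just a cleaner packaging of the paper's Leibniz-formula expansion.
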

The proof of Lemma \ref{lem:jacobchtsph} relies on the exact calculation and Taylor expansion of each entry of the Jacobian determinant in terms of Jacobi fields. It is deferred to the Appendix.
\paragraph{Uniform estimates of the volume of small balls.} We need a refinement of the Bertrand-Diquet-Puiseux estimate given at \eqref{vol} which guarantees that the expansion is the same in a neighborhood of $x_0$ and that the remaining term $o(r^{n+2})$ is uniform with respect to the center of the ball. This is described in Lemma \ref{lem:volunif} below.
\begin{lem}
\label{lem:volunif}
(i) There exists $r_0>0$  such that for every $x\in {\mathcal B}^{(M)}(x_0,r_0)$, we get
 $$ \vol^{(M)}(\bo^{(M)}(x,r))= \kappa_n r^n - \frac{ \kappa_n \Scal_{x_0}^{(M)}}{6(n+2)} r^{n+2} + o(r^{n+2})$$
and 
$$\sup_{x\in {\mathcal B}^{(M)}(x_0,r_0)} r^{-(n+2)}\left(\vol^{(M)}(\bo^{(M)}(x,r))-\kappa_n r^n + \frac{ \kappa_n \Scal_{x_0}^{(M)}}{6(n+2)} r^{n+2}\right)\underset{r\to 0}{\to}0.$$
(ii) In the particular case when $M$ is a compact Riemannian manifold, we get the additional result
$$\sup_{x_0\in M}\sup_{x\in {\mathcal B}^{(M)}(x_0,r_0)} r^{-(n+2)}\left(\vol^{(M)}(\bo^{(M)}(x,r))-\kappa_n r^n + \frac{ \kappa_n \Scal_{x_0}^{(M)}}{6(n+2)} r^{n+2}\right)\underset{r\to 0}{\to}0.$$
\end{lem}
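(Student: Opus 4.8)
The plan is to obtain both parts of Lemma \ref{lem:volunif} from the pointwise Bertrand--Diquet--Puiseux expansion \eqref{vol} together with the uniformity statements already established for the spherical Jacobian in Lemma \ref{lem:jacobchtsph}. The key observation is that, by \eqref{eq:Jru}, integrating the Jacobian over a geodesic ball yields the volume of that ball, so a uniform control on $\J^{(M)}_x(r,u)$ around $x_0$ translates directly into a uniform control on $\vol^{(M)}(\bo^{(M)}(x,r))$. More precisely, for $x\in\bo^{(M)}(x_0,r_0)$ with $r_0$ small enough (so that geodesic spherical coordinates centred at $x$ are valid on a ball of the relevant radius, using assumption $(\mathrm{A}_2)$), one has
\[
\vol^{(M)}(\bo^{(M)}(x,r)) = \int_{\sph^{n-1}}\int_0^r \J^{(M)}_x(\rho,u)\,\dd\rho\,\dvol^{(\sph^{n-1})}(u).
\]

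First I would fix $r_0$ as the minimum of the radius coming from Lemma \ref{lem:jacobchtsph}(iii), half the global injectivity radius, and any radius needed so that $\bo^{(M)}(x_0,r_0)$ is relatively compact; on a general manifold the latter is automatic since closed balls of radius below the injectivity radius are compact by Hopf--Rinow. Next, for part (i), I would use Lemma \ref{lem:jacobchtsph}(i) applied at the centre $x$: writing $\varepsilon_x(\rho,u) = \rho^{-(n+1)}\bigl(\J^{(M)}_x(\rho,u) - \rho^{n-1} + \tfrac{1}{6}\Ric^{(M)}_x(u)\rho^{n+1}\bigr)$, Lemma \ref{lem:jacobchtsph}(i) gives $\sup_u|\varepsilon_x(\rho,u)|\to 0$ as $\rho\to0$. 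Integrating the main terms, $\int_{\sph^{n-1}}\int_0^r \rho^{n-1}\,\dd\rho\,\dvol = \kappa_n r^n$, and using \eqref{eq:scalRicci} to convert $\int_{\sph^{n-1}}\Ric^{(M)}_x(u)\,\dvol^{(\sph^{n-1})}(u) = \tfrac{\sigma_{n-1}}{n}\Scal^{(M)}_x$, one gets
\[
\vol^{(M)}(\bo^{(M)}(x,r)) = \kappa_n r^n - \frac{\kappa_n \Scal^{(M)}_x}{6(n+2)} r^{n+2} + \int_{\sph^{n-1}}\int_0^r \rho^{n+1}\varepsilon_x(\rho,u)\,\dd\rho\,\dvol^{(\sph^{n-1})}(u),
\]
and the remainder integral is $o(r^{n+2})$ because $|\varepsilon_x(\rho,u)|$ is small for small $\rho$. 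To replace $\Scal^{(M)}_x$ by $\Scal^{(M)}_{x_0}$ I would invoke continuity of the scalar curvature: $\Scal^{(M)}_x = \Scal^{(M)}_{x_0} + O(d^{(M)}(x,x_0)) = \Scal^{(M)}_{x_0} + O(r_0)$, and since the term carrying it is of order $r^{n+2}$ while the discrepancy is controlled independently of $r$, choosing $r_0$ small (or absorbing it into the $o(r^{n+2})$ uniformly in $x$) gives the stated form with $\Scal^{(M)}_{x_0}$. The uniform-in-$x$ refinement then follows from the fact that the bound on $\sup_u|\varepsilon_x(\rho,u)|$ can be made uniform over $x\in\bo^{(M)}(x_0,r_0)$: this is exactly the content of extending Lemma \ref{lem:jacobchtsph}(i) to a compact family of base points, which is legitimate since $\overline{\bo^{(M)}(x_0,r_0)}$ is compact and the Jacobi-field expansion underlying Lemma \ref{lem:jacobchtsph} has coefficients and error terms depending continuously on the base point.

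For part (ii), when $M$ is itself compact, I would simply repeat the argument using Lemma \ref{lem:jacobchtsph}(ii) in place of (i): the supremum over $x_0\in M$ is then already built into the Jacobian estimate, and the whole manifold plays the role of the compact family. The uniform continuity of $x\mapsto\Scal^{(M)}_x$ on the compact manifold $M$ handles the replacement of $\Scal^{(M)}_x$ by $\Scal^{(M)}_{x_0}$ uniformly.

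The main obstacle I anticipate is bookkeeping the two distinct uniformities simultaneously: the error $\varepsilon_x(\rho,u)$ must be small uniformly in both $u\in\sph^{n-1}$ and $x$ in the relevant ball (or in $M$), and one must be careful that the $o(r^{n+2})$ produced by integrating $\rho^{n+1}\varepsilon_x(\rho,u)$ is genuinely uniform in $x$ rather than merely pointwise. Concretely, one needs a modulus of continuity $\eta(\rho)\to0$ with $\sup_{x}\sup_u|\varepsilon_x(\rho,u)|\le\eta(\rho)$, which is precisely what the compactness-based strengthening of Lemma \ref{lem:jacobchtsph} supplies; then $\bigl|\int_{\sph^{n-1}}\int_0^r\rho^{n+1}\varepsilon_x(\rho,u)\,\dd\rho\,\dvol\bigr|\le \sigma_{n-1}\int_0^r\rho^{n+1}\eta(\rho)\,\dd\rho = r^{n+2}\cdot o(1)$ uniformly. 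Apart from this, everything is a routine combination of \eqref{eq:Jru}, Lemma \ref{lem:jacobchtsph}, \eqref{eq:scalRicci}, and continuity of the curvature, so the proof should be short.
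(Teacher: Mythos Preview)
Your proposal is correct and follows essentially the same route as the paper: integrate the spherical Jacobian via \eqref{eq:Jru}, invoke the uniform expansion of Lemma \ref{lem:jacobchtsph} over a compact family of centres, convert the Ricci integral to $\Scal^{(M)}_x$ via \eqref{eq:scalRicci}, and then replace $\Scal^{(M)}_x$ by $\Scal^{(M)}_{x_0}$ using (uniform) continuity of the scalar curvature. The only cosmetic difference is that the paper packages the ``uniformity over a compact family of base points'' by explicitly applying Lemma \ref{lem:jacobchtsph}(ii) to the compact ball $\overline{\bo^{(M)}(x_0,r_1)}$, whereas you phrase it as extending part (i); these are the same argument.
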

Lemma \ref{lem:volunif} is essentially based on the Bishop-Gromov theorem and related comparison inequalities. The proof is postponed to the Appendix.

In the final lemma, we exhibit a general lower bound for the volume of a ball in the particular case when $M$ is not a compact set. Here and in the sequel, $c$ denotes a generic positive constant which may change from line to line. 
\begin{lem}
\label{lem:borneinfvolboule} 
When $M$ is non-compact, there exists a constant $c>0$ such that for every $x_0\in M$ and $r>0$,
$$\vol^{(M)}({\mathcal B}^{(M)}(x_0,r))\ge c \min(r,r^n).$$
\end{lem}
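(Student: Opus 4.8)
The plan is to split the range of $r$ into the regimes $r\le 1$ and $r\ge 1$. Since $n\ge 2$, one has $\min(r,r^n)=r^n$ on the first regime and $\min(r,r^n)=r$ on the second, so it suffices to establish, uniformly in $x_0$, a bound $\vol^{(M)}(\mathcal{B}^{(M)}(x_0,r))\ge c_1 r^n$ for $r\le 1$ and $\vol^{(M)}(\mathcal{B}^{(M)}(x_0,r))\ge c_2 r$ for $r\ge 1$, and then take $c=\min(c_1,c_2)$. Throughout I fix $r_0>0$ to be the radius furnished by Lemma~\ref{lem:jacobchtsph}(iii); shrinking it if necessary I may assume $r_0\le 1$ and $r_0\le R_{\mbox{\tiny inj}}$, so that geodesic spherical coordinates are available on every ball of radius $\le r_0$.

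For the small regime I would use Lemma~\ref{lem:jacobchtsph}(iii) directly. For any center $x\in M$ and any $r\le r_0$, combining \eqref{eq:Jru} with the uniform bound $\J_{x}^{(M)}(t,u)\ge\tfrac12 t^{n-1}$ gives
\[ \vol^{(M)}(\mathcal{B}^{(M)}(x,r))=\int_{\sph^{n-1}}\!\int_0^r \J_{x}^{(M)}(t,u)\,\dd t\,\dvol^{(\sph^{n-1})}(u)\ \ge\ \frac{\sigma_{n-1}}{2n}\,r^n. \]
For $r\le 1$ this yields $\vol^{(M)}(\mathcal{B}^{(M)}(x_0,r))\ge \vol^{(M)}(\mathcal{B}^{(M)}(x_0,\min(r,r_0)))\ge \tfrac{\sigma_{n-1}}{2n}\min(r,r_0)^n\ge \tfrac{\sigma_{n-1}r_0^n}{2n}\,r^n$, the last step using $r\le 1$ and $r_0\le 1$. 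I also record the particular case $\rho:=r_0/2$: every geodesic ball of radius $\rho$ centered anywhere in $M$ has volume at least $v_0:=\tfrac{\sigma_{n-1}}{2n}\rho^n>0$.

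For the large regime, fix $r\ge 1$, hence $r\ge r_0$. Since $M$ is connected, complete and non-compact it is unbounded (otherwise Hopf--Rinow would make it compact), so there exists $q\in M$ with $d^{(M)}(x_0,q)\ge r$; restricting a minimizing geodesic from $x_0$ to $q$ to the interval $[0,r]$ produces a unit-speed minimizing geodesic $\gamma\colon[0,r]\to M$ with $\gamma(0)=x_0$, so that $d^{(M)}(\gamma(s),\gamma(t))=|s-t|$ on $[0,r]$. I then set $N=\lfloor r/r_0\rfloor\ge 1$ and $x_j=\gamma(jr_0)$ for $0\le j\le N$. Minimality gives $d^{(M)}(x_i,x_j)=|i-j|\,r_0\ge 2\rho$ for $i\ne j$, so the open balls $\mathcal{B}^{(M)}(x_j,\rho)$, $0\le j\le N-1$, are pairwise disjoint, and since $d^{(M)}(x_0,x_j)+\rho=jr_0+r_0/2<Nr_0\le r$ for $j\le N-1$, they are all contained in $\mathcal{B}^{(M)}(x_0,r)$. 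Summing the lower bound $v_0$ over these $N$ balls gives
\[ \vol^{(M)}(\mathcal{B}^{(M)}(x_0,r))\ \ge\ N v_0\ \ge\ \frac{v_0}{2r_0}\,r, \]
using $\lfloor r/r_0\rfloor\ge r/(2r_0)$ for $r\ge r_0$. This completes the proof with $c=\min\!\big(\tfrac{\sigma_{n-1}r_0^n}{2n},\tfrac{v_0}{2r_0}\big)$.

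I do not expect a genuine obstacle here: once Lemma~\ref{lem:jacobchtsph}(iii) is granted, the whole argument is elementary metric geometry, and the only step that actually invokes the standing hypotheses on $M$ (in particular assumptions ($\mbox{A}_1$) and ($\mbox{A}_2$)) is precisely the uniform small-ball estimate, which is what keeps the packing constant $v_0$ from degenerating as the centers $x_j$ run off to infinity. Completeness and non-compactness are used only to produce an arbitrarily long minimizing geodesic issuing from $x_0$, and the main bookkeeping subtlety is simply making sure the two $r$-regimes cover all $r>0$ with compatible constants.
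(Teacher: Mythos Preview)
Your proof is correct and follows essentially the same strategy as the paper's: use Lemma~\ref{lem:jacobchtsph}(iii) for the small-radius regime and, for large $r$, pack disjoint small balls along a minimizing geodesic emanating from $x_0$ (whose existence uses completeness and non-compactness via Hopf--Rinow). The only differences are cosmetic---the paper splits at $r=r_0$ rather than $r=1$ and uses a slightly different packing radius---but the argument is the same.
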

This result is in the same spirit as the classical estimate due to Calabi and Yau in the case of positive Ricci curvature, see e.g. \cite[Section 0]{CK88} and \cite[(iii) p. 669]{Yau76}. 
Again, the proof of Lemma \ref{lem:borneinfvolboule} can be found in the Appendix.
\section{Reduction to the case when $M$ is compact}\label{sec:locdep}
This section aims at showing that it is enough to show Theorems \ref{thm:propvol}-\ref{NSec} in the particular case when $M$ is a compact Riemannian manifold. This is needed for the following reason: each of the considered expectations will be written as an integral of an integrand of type $e^{-\lambda \vol^{(M)}({\mathcal B}^{(M)}(x,r))}$ with respect to $\mathrm{d}\mbox{vol}^{(M)}$ or to the product $(\mathrm{d}\mbox{vol}^{(M)})^n$, see e.g. \eqref{eq:volfub} and \eqref{Sliv}. In order to get the required asymptotics, we need to replace both the integrand and the Jacobian of the change of variables by precise estimates in the vicinity of the point $x_0$, see Lemma \ref{lem:volunif} (i), Lemma \ref{lem:jacobchtsph} (i) and Theorem \ref{Jacob}. Being able to integrate these estimates means that they are uniform with respect to the variable(s) of integration and that the contribution of points far from $x_0$ is negligible. It turns out that it will be much more convenient to show the uniformity in the context of a compact Riemannian manifold, see Lemma \ref{lem:volunif} (ii), Lemma \ref{lem:jacobchtsph} (ii) and Proposition \ref{prop:jacobunif}. Similarly, the negligibility  of the contribution of points far from $x_0$ is easily proved as soon as the volume of $M$ is finite, see \eqref{eq:conclusionsurtildeI} and \eqref{eq:ItildeNeg}.

Let us fix $x_0\in M$ and consider $r>0$ small enough to be chosen later such that the closure of the geodesic ball ${\mathcal B}^{(M)}(x_0,r)$ is a compact neighborhood of $x_0$. We define the modification $\vol^{(M)}({\widetilde{\mathcal C}}^{(M)}_{x_0,\lambda})$ (resp. $N({\widetilde{\mathcal C}}^{(M)}_{x_0,\lambda})$) of the variable $\vol^{(M)}(\Cl)$ (resp. $\Nl$) as the volume (resp. the number of vertices) of the Voronoi cell ${\widetilde{\mathcal C}}^{(M)}_{x_0,\lambda}$ associated with $x_0$ when $M$ is replaced by the new manifold ${\mathcal B}^{(M)}(x_0,r)$, namely
$${\widetilde{\mathcal C}}^{(M)}_{x_0,\lambda}={\mathcal C}^{({\mathcal B}^{(M)}(x_0,r))}(x_0,{\mathcal P}_{\lambda}\cap {\mathcal B}^{(M)}(x_0,r)\cup\{x_0\}).$$
Both variables $\vol^{(M)}({\widetilde{\mathcal C}}^{(M)}_{x_0,\lambda})$ and $\vol^{(M)}(\Cl)$ (resp. $N({\widetilde{\mathcal C}}^{(M)}_{x_0,\lambda})$ and $\Nl$) are naturally coupled. The main result of the section is the following proposition which says that the difference in expectation between the two variables is negligible in front of $\lambda^{-(1+\frac{2}{n})}$ (resp. $\lambda^{-\frac 2n}$), i.e. in front of the second term of the desired two-term expansion in Theorem \ref{thm:propvol} (resp. Theorem \ref{THM}). 
\begin{prop}
\label{prop:localdep}
There exists $c>0$ such that for $\lambda$ large enough, we get
\begin{equation}
  \label{eq:resultatlocaldepvol}
\E\left[|\vol^{(M)}(\Cl)-\vol^{(M)}({\widetilde{\mathcal C}}^{(M)}_{x_0,\lambda})|{\bf 1}_{\{\Cl\ne {\widetilde{\mathcal C}}^{(M)}_{x_0,\lambda}\}}\right]\le c e^{-\lambda/c}
\end{equation}
and
\begin{equation}
  \label{eq:resultatlocaldep}
\E\left[|\Nl-N({\widetilde{\mathcal C}}^{(M)}_{x_0,\lambda})|{\bf 1}_{\{\Cl\ne {\widetilde{\mathcal C}}^{(M)}_{x_0,\lambda}\}}\right]\le c e^{-\lambda/c}.  
\end{equation}
\end{prop}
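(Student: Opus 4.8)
The plan is to show that the event $\{\Cl \neq {\widetilde{\mathcal C}}^{(M)}_{x_0,\lambda}\}$ is extremely unlikely, and that even on this event the discrepancy between the two variables cannot be too large. The key geometric observation is that $\Cl$ and ${\widetilde{\mathcal C}}^{(M)}_{x_0,\lambda}$ coincide as soon as $\Cl$ is contained in ${\mathcal B}^{(M)}(x_0,r/2)$, say: indeed, a point $y\in M$ lies in $\Cl$ precisely when the open geodesic ball centered at $y$ with radius $d^{(M)}(x_0,y)$ contains no point of $\Pl$, and whether this ball is ``seen'' by the full process or only by $\Pl\cap {\mathcal B}^{(M)}(x_0,r)$ makes no difference when $y$ is deep inside ${\mathcal B}^{(M)}(x_0,r)$. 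So the first step is to prove a containment estimate: there exists $c_0>0$ with
\[
\proba\!\left(\Cl \not\subset {\mathcal B}^{(M)}(x_0,r/2)\right) \le c_0\, e^{-\lambda/c_0}
\]
for $\lambda$ large. This follows from the fact that if $\Cl$ reaches distance $r/2$ from $x_0$, then some point at distance $r/2$ is closer to $x_0$ than to every point of $\Pl$, which forces $\Pl$ to avoid a geodesic ball of radius $r/4$ (using the triangle inequality); that ball has volume at least $c\min(r^n, r)$ by Lemma \ref{lem:borneinfvolboule} in the non-compact case (and a fixed positive constant in the compact case), so the probability is at most $e^{-\lambda c (r/4)^n}$ for $r$ small. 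Taking a union over a finite covering of the geodesic sphere $\partial{\mathcal B}^{(M)}(x_0,r/2)$ by balls of radius $r/8$ (the covering number is bounded by assumption $(\mbox{A}_1)$ and compactness of the sphere, which is itself compact once $r<R_{\mbox{\tiny inj}}$) turns this pointwise bound into the claimed estimate. On the complementary event, every vertex of $\Cl$ lies in ${\mathcal B}^{(M)}(x_0,r/2)$, and the local nature of the Voronoi/Delaunay construction — a vertex is the center of an empty circumscribed ball through $x_0$ and $n$ nuclei, all of which then lie within ${\mathcal B}^{(M)}(x_0,r)$ by the triangle inequality — gives $\Cl = {\widetilde{\mathcal C}}^{(M)}_{x_0,\lambda}$, so in particular $\{\Cl\ne {\widetilde{\mathcal C}}^{(M)}_{x_0,\lambda}\}\subset\{\Cl\not\subset {\mathcal B}^{(M)}(x_0,r/2)\}$.

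Next I would control the two expectations by Cauchy--Schwarz, writing
\[
\E\!\left[|\vol^{(M)}(\Cl)-\vol^{(M)}({\widetilde{\mathcal C}}^{(M)}_{x_0,\lambda})|{\bf 1}_{\{\Cl\ne {\widetilde{\mathcal C}}^{(M)}_{x_0,\lambda}\}}\right]
\le \E\!\left[\big(\vol^{(M)}(\Cl)+\vol^{(M)}({\widetilde{\mathcal C}}^{(M)}_{x_0,\lambda})\big)^2\right]^{1/2}\proba\!\left(\Cl\ne {\widetilde{\mathcal C}}^{(M)}_{x_0,\lambda}\right)^{1/2},
\]
and similarly with $\Nl$ in place of the volumes. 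It then suffices to show the second moments of $\vol^{(M)}(\Cl)$ and of $\Nl$ (and of the tilded versions) grow at most polynomially in $\lambda$ — in fact they are bounded uniformly, but a crude polynomial bound already suffices since the probability factor decays like $e^{-\lambda/(2c_0)}$. For the volume, one has $\vol^{(M)}(\Cl)\le\vol^{(M)}(M)$ when $M$ is compact, and in the non-compact case $\vol^{(M)}(\Cl)$ is dominated by $\vol^{(M)}({\mathcal B}^{(M)}(x_0,R))$ on the event $\{\Cl\subset {\mathcal B}^{(M)}(x_0,R)\}$ whose complement has probability decaying in $R$; combining with the containment estimate at scale $R=1$ gives a uniform bound on all moments. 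For $\Nl$, the number of vertices is bounded by a constant (from $(\mbox{A}_3)$) times the number of $(n+1)$-subsets of $\Pl\cap {\mathcal B}^{(M)}(x_0,R)$ whose circumscribed ball is $\Pl$-empty; a standard Palm/Mecke computation bounds $\E[\Nl^2]$ by a polynomial in $\lambda\vol^{(M)}({\mathcal B}^{(M)}(x_0,R))$, again uniformly after conditioning on the (high-probability) containment event.

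The main obstacle is the first step — making the containment estimate genuinely uniform and correctly exponential. One must be careful that the covering number of $\partial{\mathcal B}^{(M)}(x_0,r/2)$ and the volume lower bound for the small ``empty'' balls are both controlled by constants depending only on $M$ (through $(\mbox{A}_1)$, $(\mbox{A}_2)$ and Lemma \ref{lem:borneinfvolboule}), and not on $x_0$, so that the constant $c$ in the statement is legitimate; in the compact case this is automatic, while in the non-compact case one leans on Lemma \ref{lem:borneinfvolboule} for the lower bound $\vol^{(M)}({\mathcal B}^{(M)}(x_0,\rho))\ge c\min(\rho,\rho^n)$. Once the containment estimate and the polynomial moment bounds are in hand, \eqref{eq:resultatlocaldepvol} and \eqref{eq:resultatlocaldep} follow immediately by absorbing the $e^{-\lambda/(2c_0)}$ against any polynomial and renaming constants.
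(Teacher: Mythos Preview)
Your proposal is correct and follows essentially the same route as the paper: Cauchy--Schwarz splits the expectation into a second-moment factor and the probability $\proba(\Cl\ne {\widetilde{\mathcal C}}^{(M)}_{x_0,\lambda})$. The paper packages these as two lemmas --- Lemma~\ref{lem:secondmoment} bounds the second moments by a direct Mecke computation (obtaining constant, not merely polynomial, bounds), and Lemma~\ref{lem:stabilization} gives the exponential probability bound via the Voronoi flower and Gromov's packing lemma, which is your containment argument in slightly different language.
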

\begin{proof}
Let $h$ be a function equal to either $\vol^{(M)}(\cdot)$ or $N(\cdot)$.
By the Cauchy-Schwarz inequality, we get
\begin{align}
  \label{eq:diffesperances}
&\E\left[|h(\Cl)-h({\widetilde{\mathcal C}}^{(M)}_{x_0,\lambda})|{\bf 1}_{\{\Cl\ne {\widetilde{\mathcal C}}^{(M)}_{x_0,\lambda}\}}\right]\nonumber\\
&\hspace*{4cm}\le \sqrt{\max(\E[h(\Cl)^2],\E[h({\widetilde{\mathcal C}}^{(M)}_{x_0,\lambda})^2)]}\sqrt{\proba[\Cl\ne {\widetilde{\mathcal C}}^{(M)}_{x_0,\lambda}]}.
\end{align}
Consequently, it is enough to show the two following facts:
on one hand, $\E[h(\Cl)^2]$ and its modification $\E[h({\widetilde{\mathcal C}}^{(M)}_{x_0,\lambda})^2]$ are bounded from above by a constant not depending on $\lambda$ and on the other hand, the probability $\proba[\Cl\ne {\widetilde{\mathcal C}}^{(M)}_{x_0,\lambda}]$ is exponentially decreasing, like $e^{-\lambda/c}$. We do so in the next two lemmas, whose proofs are postponed to the appendix.

The next lemma provides basic estimates for the second moments of both the volume and the number of vertices of the Voronoi cell.
\begin{lem}
  \label{lem:secondmoment}
There exists $c>0$ such that for every $\lambda>0$,
\begin{equation}
  \label{eq:momentsfinisvol}
\max(\E[\vol^{(M)}(\Cl)^2],\E[\vol^{(M)}({\widetilde{\mathcal C}}^{(M)}_{x_0,\lambda})^2])\le \frac{c}{\lambda^2}
\end{equation}
and 
\begin{equation}
  \label{eq:momentsfinisN}
\max(\E[N(\Cl)^2],\E[N({\widetilde{\mathcal C}}^{(M)}_{x_0,\lambda})^2])\le c.
\end{equation}
\end{lem}
The second lemma shows the localization of the random variable $\Nl$, namely that for $r$ chosen to be small enough, $\Cl$ and ${\widetilde{\mathcal C}}^{(M)}_{x_0,\lambda}$ differ with a probability decreasing exponentially fast to zero when $\lambda\to\infty$.
\begin{lem}
 \label{lem:stabilization}
For $r>0$ small enough, there exists $c>0$ such that $$\proba(\Cl\ne {\widetilde{\mathcal C}}^{(M)}_{x_0,\lambda})\le c e^{-\lambda/c}.$$
\end{lem}
Inserting the results of Lemmas \ref{lem:secondmoment} and \ref{lem:stabilization} into \eqref{eq:diffesperances}, we obtain the required result \eqref{eq:resultatlocaldep}.
\end{proof}
Thanks to Proposition \ref{prop:localdep}, we can henceforth assume in the rest of the paper that $M$ is a compact Riemannian manifold and when necessary that $M$ is equal to the closure of a geodesic ball ${\mathcal B}^{(M)}(x_0,r)$ for $r$ small enough.
\section{Mean volume of $\Cl$: proof of Theorem \ref{thm:propvol}}\label{sec:vol}

In this section, we prove Theorem \ref{thm:propvol} which contains an asymptotic expansion of $\E(\vol^{(M)}(\Cl))$ and explicit formulas in the particular cases of ${\mathcal S}_k^{n}$ and ${\mathcal H}_k^n$.

\begin{proof}[Proof of Theorem \ref{thm:propvol} (i)]
Let us fix $x_0\in M$. Thanks to Fubini's theorem and to the definition of the Poisson point process ${\mathcal P}_{\lambda}$, we get
\begin{equation} \label{eq:volfub}
\E[\vol^{(M)}(\Cl)] = \int_{M} \proba( x\in \Cl) \dvol^{(M)}(x)= \int_{M} e^{-\lambda\vol^{(M)}({\mathcal B}^{(M)}(x,d^{(M)}(x_0,x)))} \dvol^{(M)}(x).
\end{equation}

The computation of this integral requires to calculate the volume of ${\mathcal B}^{(M)}(x,d^{(M)}(x_0,x)))$ and to rewrite the volume element $\dvol^{(M)}(x)$. The key idea is to discriminate between points $x$ close to $x_0$ and points $x$ far from $x_0$. On the one hand, when the distance between $x$ and $x_0$ tends to $0$, Lemmas \ref{lem:jacobchtsph} and \ref{lem:volunif} provide asymptotics for the volume element $\dvol^{(M)}(s)$ and the volume 
$\vol^{(M)}({\mathcal B}^{(M)}(x,d^{(M)}(x_0,x)))$ respectively. On the other hand, it is expected that the contribution of points $x$, when the distance between $x$ and $x_0$ is `large', is negligible since the integrand decreases exponentially fast with the distance.\\

\noindent \textit{Step 1: decomposition of $\E[\vol^{(M)}(\Cl)]$ into two integrals}.
We rewrite the integral in \eqref{eq:volfub} as the sum of two integrals over ${\mathcal B}^{(M)}(x_0,r_\lambda)$ and $M\setminus {\mathcal B}^{(M)}(x_0,r_\lambda)$ where $r_\lambda$ is a positive radius depending on $\lambda$. We do so by choosing $r_\lambda=\lambda^{-\frac{n+1}{n(n+2)}}$ so that $r_\lambda$ is at the same time slightly larger than the diameter of $\Cl$ which is of order $\lambda^{-\frac{1}{n}}$ and small enough to guarantee that $\lambda r^{n+2}$ is negligible when $\lambda\to\infty$. The first requisite is natural whereas the second one will become necessary in Step 2. 
Using \eqref{eq:volfub}, we rewrite the expectation $\E[\vol^{(M)}(\Cl)]$ as
\begin{align}
\E[\vol^{(M)}(\Cl)] &= I_\lambda+\tilde{I}_{\lambda}
\label{eq:decoupInt}
\end{align}
where 
\begin{equation}
  \label{eq:defIinterm}
I_\lambda=\int_{\bo^{(M)}(x_0,\lambda^{-\frac{n+1}{n(n+2)}})} e^{-\lambda\vol^{(M)}({\mathcal B}^{(M)}(x,d^{(M)}(x_0,x)))} \dvol^{(M)}(x)  
\end{equation}
and
\begin{equation}
  \label{eq:defI'interm}
\tilde{I}_\lambda=\int_{M\backslash \bo^{(M)}(x_0,\lambda^{-\frac{n+1}{n(n+2)}})} e^{-\lambda\vol^{(M)}({\mathcal B}^{(M)}(x,d^{(M)}(x_0,x)))} \dvol^{(M)}(x).  
\end{equation} 
\noindent\textit{Step 2: $\tilde{I}_\lambda$ is negligible.} Let us show that $\tilde{I}_\lambda$ given at \eqref{eq:defI'interm} is negligible in front of $\frac{1}{\lambda^{1+\frac 2n}}$. To this end, let us observe that, since $M$ is compact, by Lemma \ref{lem:volunif} (i), (ii), there exists a constant $c$ such that for $\lambda$ large enough and every $x\in M\backslash \bo^{(M)}(x_0,\lambda^{-\frac{n+1}{n(n+2)}})$, 
\begin{equation}
  \label{eq:minorantvolbouletildeI}
 \vol^{(M)}( \bo^{(M)}(x,d^{(M)}(x_0,x)) ) \ge \vol^{(M)}( \bo^{(M)}(x,\lambda^{-\frac{n+1}{n(n+2)}}) ) \ge c \lambda^{-\frac{n+1}{n+2}} .
\end{equation}
Inserting \eqref{eq:minorantvolbouletildeI} into \eqref{eq:defI'interm}, we get
\begin{align*}
\tilde{I}_\lambda
&\le \vol^{(M)}(M) e^{-c\lambda^{1-\frac{n+1}{n+2}}}. 
\end{align*}
This implies that when $\lambda\to\infty$,
\begin{equation}
  \label{eq:conclusionsurtildeI}
\tilde{I}_\lambda=o\left(\frac{1}{\lambda^{1+\frac 2n}}\right).  
\end{equation}
\noindent\textit{Step 3 : estimate of $I_\lambda$}. We prove now that when $\lambda\to\infty$
\begin{equation}
  \label{eq:resultatvouluI}
I_\lambda=\frac{1}{\lambda}+o\left(\frac{1}{\lambda^{1+\frac 2n}}\right).  
\end{equation}
Applying the spherical change of variables provided by \eqref{eq:Jru}, we obtain
\begin{equation}
  \label{eq:Ireecrit}
I_\lambda=\int_0^{\lambda^{-\frac{n+1}{n(n+2)}}}\int e^{-\lambda\vol^{(M)}({\mathcal B}^{(M)}(x,r))}|{\mathcal J}_{x_0}^{(M)}(r,u)|\mathrm{d}r\dvol^{(\sph^{n-1})}(u).   
\end{equation}
We then replace both functionals $\vol^{(M)}({\mathcal B}^{(M)}(x,r))$ and ${\mathcal J}_{x_0}^{(M)}(r,u)$ by suitable estimates. Let $\varepsilon>0$. Thanks to Lemmas \ref{lem:jacobchtsph} (ii) and \ref{lem:volunif} (i),  we get for $\lambda$ large enough, $0<r<\lambda^{-\frac{n+1}{n(n+2)}}$ and $u\in T_{x_0}M$ with $\|u\|_{x_0}=1$,
\begin{align}
r^{n-1} - \frac{ \Ric^{(M)}_{x_0}(u)}{6}r^{n+1} -\varepsilon r^{n+1}&\le \J_{x_0}^{(M)}(r,u) \le  r^{n-1} - \frac{ \Ric^{(M)}_{x_0}(u)}{6}r^{n+1} + \varepsilon r^{n+1} \label{eq:inegJacobExp2},
\end{align}
and
\begin{align*}
     e^{-\lambda (\kappa_n r^n-(\frac{\kappa_n \Scal_{x_0}^{(M)}}{6(n+2)}  -\varepsilon)r^{n+2})} &\le e^{-\lambda \vol^{(M)}(\bo^{(M)}(x,r))} 
\le e^{-\lambda (\kappa_n r^n-(\frac{\kappa_n \Scal_{x_0}^{(M)}}{6(n+2)}  +\varepsilon)r^{n+2})}
\nonumber
\end{align*}
which implies
\begin{align}
     e^{-\lambda \kappa_n r^n} \left[ 1+ \left(\frac{\kappa_n \Scal_{x_0}^{(M)}}{6(n+2)}  -\varepsilon\right)\lambda r^{n+2} \right]&\le e^{-\lambda \vol^{(M)}(\bo^{(M)}(x,r))} 
\le e^{-\lambda \kappa_n r^n} e^{\left(\frac{\kappa_n \Scal_{x_0}^{(M)}}{6(n+2)}  +\varepsilon\right)\lambda r^{n+2}}.
\label{eq:inegVolumeB} 
\end{align}
We notice that the right hand side of \eqref{eq:inegVolumeB} can be simplified. Indeed, since $\lambda r^{n+2}\le \lambda^{-\frac{1}{n}}$ goes to zero, 
we deduce from \eqref{eq:inegVolumeB} that for $\lambda$ large enough and $r<\lambda^{-\frac{n+1}{n(n+2)}}$,
\begin{align}
     e^{-\lambda \kappa_n r^n} \left[ 1+ \left(\frac{\kappa_n \Scal_{x_0}^{(M)}}{6(n+2)}  -\varepsilon\right)\lambda r^{n+2} \right]&\le e^{-\lambda \vol^{(M)}(\bo^{(M)}(x,r))} 
\le e^{-\lambda \kappa_n r^n} \left[1+\left(\frac{\kappa_n \Scal_{x_0}^{(M)}}{6(n+2)}  +2\varepsilon\right)\lambda r^{n+2}\right].
\label{eq:inegVolumeC} 
\end{align}
Inserting \eqref{eq:inegJacobExp2} and \eqref{eq:inegVolumeC} into \eqref{eq:defIinterm}, we get
\begin{equation} I_{\lambda,-} \le I_{\lambda} \le I_{\lambda,+} ,\label{eq:infIsup}\end{equation}

where 
\begin{align}
I_{\lambda,-} &=\int_{0}^{\lambda^{-\frac{n+1}{n(n+2)}}} \int
e^{-\lambda \kappa_n r^n} \left[ 1+ \left(\frac{\kappa_n \Scal_{x_0}^{(M)}}{6(n+2)}  -\varepsilon\right)\lambda r^{n+2} - \left(\frac{ \Ric^{(M)}_{x_0}(u)}{6} +\varepsilon\right) r^{2}\right.\nonumber \\
&\hspace*{4cm}\left.-\left(\frac{\kappa_n \Scal_{x_0}^{(M)}}{6(n+2)}  -\varepsilon\right)\left(\frac{ \Ric^{(M)}_{x_0}(u)}{6} +\varepsilon\right)\lambda r^{n+4}\right]  \dvol^{(\sph^{n-1})}(u)r^{n-1}\dd r \notag
\end{align}
and 
\begin{align}
I_{\lambda,+} &=\int_{0}^{\lambda^{-\frac{n+1}{n(n+2)}}} \int  
e^{-\lambda \kappa_n r^n} \left[ 1+ \left(\frac{\kappa_n \Scal_{x_0}^{(M)}}{6(n+2)}  +2\varepsilon\right)\lambda r^{n+2} - \left(\frac{ \Ric^{(M)}_{x_0}(u)}{6} -\varepsilon\right) r^{2}\right.\nonumber \\
&\hspace*{4cm}\left.-\left(\frac{\kappa_n \Scal_{x_0}^{(M)}}{6(n+2)}  +2\varepsilon\right)\left(\frac{ \Ric^{(M)}_{x_0}(u)}{6} -\varepsilon\right)\lambda r^{n+4}\right]  \dvol^{(\sph^{n-1})}(u)r^{n-1}\dd r. \notag
\end{align}
Let us determine an upper bound for $I_{\lambda,+}$. 
Using the change of variables $y=\lambda\kappa_n r^n$, we obtain that
\begin{align}
\label{eq:I_+interm}
I_{\lambda,+} &\le \int_{0}^{\infty} e^{-y}\int 
\left[ 1+ \left(\frac{\kappa_n \Scal_{x_0}^{(M)}}{6(n+2)}  +2\varepsilon\right)\left(\frac{y}{\kappa_n}\right)^{1+\frac{2}{n}}\frac{1}{\lambda^{\frac{2}{n}}} - \left(\frac{ \Ric^{(M)}_{x_0}(u)}{6} -\varepsilon\right) \left(\frac{y}{\kappa_n}\right)^{\frac{2}{n}}\frac{1}{\lambda^{\frac{2}{n}}}\right.\nonumber \\
&\hspace*{2.3cm}\left.-\left(\frac{\kappa_n \Scal_{x_0}^{(M)}}{6(n+2)}  +2\varepsilon\right)\left(\frac{ \Ric^{(M)}_{x_0}(u)}{6} -\varepsilon\right)\left(\frac{y}{\kappa_n}\right)^{1+\frac{4}{n}}\frac{1}{\lambda^{\frac{4}{n}}}\right]
\dvol^{(\sph^{n-1})}(u)\frac{\dd y}{n\kappa_n \lambda}.
\end{align}
Before going further, we notice that, thanks to ($\mbox{A}_1$), the integration over $y$ and $u$ of the term 
$$
e^{-y}\left(\frac{\kappa_n \Scal_{x_0}^{(M)}}{6(n+2)}  +2\varepsilon\right)
\left(\frac{ \Ric^{(M)}_{x_0}(u)}{6} -\varepsilon\right)
\left(\frac{y}{\kappa_n}\right)^{1+\frac{4}{n}}
\frac{1}{\lambda^{\frac{4}{n}}}\frac{1}{n\kappa_n \lambda}
$$
is bounded by $\frac{1}{\lambda^{1+\frac{4}{n}}}$ up to a multiplicative constant.
Consequently, an integration over $u \in \sph^{n-1}$ in \eqref{eq:I_+interm} combined with \eqref{eq:scalRicci} provides
\begin{align}
I_{\lambda,+} &\le \int_{0}^{\infty} e^{-y} \left\{
\frac{1}{\lambda}  +
\frac{1}{\kappa_n^{\frac{2}{n}}}\left[\left(\frac{\Scal_{x_0}^{(M)}}{6(n+2)} +2\varepsilon\right)y^{1+\frac{2}{n}}-\left(\frac{\Scal_{x_0}^{(M)}}{6n} -\varepsilon\right)y^{\frac{2}{n}}\right]
\frac{1}{\lambda^{1+\frac{2}{n}}}\right\}\mathrm{d}y +\frac{c}{\lambda^{1+\frac{4}{n}}}.
\label{eq:supI3}
\end{align}
where $c$ is a positive constant. Integrating now over $y$ in the right hand side of \eqref{eq:supI3}, we obtain for $\lambda$ large enough
\begin{align}
I_{\lambda,+} &\le  \frac{1}{\lambda} + \frac{\Scal_{x_0}^{(M)}}{6\kappa_n^{\frac{2}{n}}}\left(\frac{\Gamma\left(2+\frac 2n \right)}{n+2}- \frac{\Gamma\left( 1+\frac 2n\right) }{n} +c\varepsilon\right)\frac{1}{\lambda^{1+ \frac 2n}} \notag \\
& \le \frac{1}{\lambda} + c\varepsilon\frac{1}{\lambda^{1+ \frac 2n}}.\label{eq:supI4} 
\end{align}
Similarly, we can prove that 
\begin{align}
I_{\lambda,-}  & \ge \frac{1}{\lambda} - c\varepsilon\frac{1}{\lambda^{1+ \frac 2n}}.\label{eq:supI5} 
\end{align}
Inserting \eqref{eq:supI4} and \eqref{eq:supI5} into \eqref{eq:infIsup}, we obtain \eqref{eq:resultatvouluI} which, combined with \eqref{eq:conclusionsurtildeI} and \eqref{eq:decoupInt}, completes the proof of point (i) of Theorem \ref{thm:propvol}.
\end{proof}
\begin{proof}[Proof of Theorem \ref{thm:propvol} (ii)]
\noindent{\textbf{Case $M={\mathcal S}_k^n$}}.
We go back to \eqref{eq:volfub} when $M={\mathcal S}_k^n$ and we use the spherical change of variables $x=\exp_{x_0}(ru)$, with $r\in [0,\frac{\pi}{k}]$ and $ u\in \sph^{n-1}$. The Jacobian determinant of this change of variables satisfies
\begin{equation}
  \label{eq:sphsph}
 \dvol^{(\sph_k^n)}(x) = \left(\frac{\sin(kr)}k \right)^{n-1} \dd r\dvol^{(\sph_1^{n-1})}(u) .
\end{equation}
The ball $\bo^{(\sph^n_k)}(x,d^{(\sph^n_k)}(x_0,x))$ has radius $r$ and volume
\begin{equation}
  \label{eq:boulesph}
 \vol^{(\sph^n_k)}(\bo^{(\sph^n_k)}(x,r)) = \sigma_{n-1}\int_{t=0}^{r} \left(\frac{\sin(kt)}k \right)^{n-1} \dd t  
\end{equation}
where we recall that $\sigma_{n-1}=\vol^{(\sph^{n-1})}(\sph^{n-1})$.
Combining \eqref{eq:volfub} applied to $M={\mathcal S}_k^n$, \eqref{eq:sphsph} and \eqref{eq:boulesph},  we obtain
\begin{align*}
 \E[\vol^{(\sph^n_k)}({\mathcal C}_{x_0,\lambda}^{(\sph^n_k)})] &= \int_{r=0}^{\frac{\pi}{k}} \int_{u\in\sph_1^{n-1}}e^{-\lambda\sigma_{n-1}\int_{t=0}^{r} \left(\frac{\sin(kt)}k \right)^{n-1} dt }  \left(\frac{\sin(kr)}k \right)^{n-1} \dd r\dvol^{(\sph_1^{n-1})}(u) \\
&= \sigma_{n-1}\int_{r=0}^{\frac{\pi}{k}}e^{-\lambda\sigma_{n-1}\int_{t=0}^{r} \left(\frac{\sin(kr)}k \right)^{n-1} dt }  \left(\frac{\sin(kr)}k \right)^{n-1} \dd r. 
\end{align*}
Using the change of variables $\displaystyle y= \sigma_{n-1}\lambda\int_{t=0}^{r} \left(\frac{\sin(kr)}k \right)^{n-1} \dd t $ in the integral above, we get 
      \begin{align*}
 \E[\vol^{(\sph^n_k)}({\mathcal C}_{x_0,\lambda}^{(\sph^n_k)})] &= \frac{1}{\lambda} \int_{y=0}^{\sigma_{n-1}\frac{\lambda}{k^n}\int_{t=0}^{\pi} (\sin(t))^{n-1} \dd t}e^{-y} \dd y \\
& = \frac{1}{\lambda} \left( 1 - e^{-2\sigma_{n-1}W_{n-1}\frac{\lambda}{k^n}} \right)
\end{align*}
where $\displaystyle W_{n-1}=\int_{t=0}^{\frac{\pi}{2}} \sin^{n-1}(t) \dd t$ is the $(n-1)$-th Wallis integral which is classically know to be equal to $\frac{1}{2}B\left(\frac{n}{2},\frac{1}{2}\right)$, see e.g. \cite[Formula (5.6)]{Art64}.\\

\noindent{\textbf{Case $M={\mathcal H}^n_k$}}. 
The identity \eqref{eq:Jru} can be rewritten as 
\begin{equation}\label{eq:sphhyp}
 \dvol^{({\mathcal H}_k^n)}(x) = \left(\frac{\sinh(kr)}k \right)^{n-1} \dd r\dvol^{(\sph_1^{n-1})}(u) .
\end{equation}
Moreover, for any $x\in \mathcal{H}^n_k$ and $r>0$,  
\begin{equation}
  \label{eq:boulehyp}
 \vol(\bo^{(\mathcal{H}^n_k)}(x,r)) = \sigma_{n-1}\int_{t=0}^{r} \left(\frac{\sinh(kt)}k \right)^{n-1} \dd t .
\end{equation}
Combining \eqref{eq:volfub} applied to $M={\mathcal H}_k^n$, \eqref{eq:sphhyp} and \eqref{eq:boulehyp}, we obtain
\begin{align*}
 \E[\vol^{(\mathcal{H}^n_k)}({\mathcal C}_{x_0,\lambda}^{(\mathcal{H}^n_k)})] &= \int_{r=0}^{\frac{\pi}{k}} \int_{u\in\sph^{n-1}}e^{-\lambda\sigma_{n-1}\int_{t=0}^{r} \left(\frac{\sinh(kt)}k \right)^{n-1} \dd t }  \left(\frac{\sinh(kr)}k \right)^{n-1} \dd r\dvol^{(\sph_1^{n-1})}(u) \\
&= \sigma_{n-1}\int_{r=0}^{\frac{\pi}{k}}e^{-\lambda\sigma_{n-1}\int_{t=0}^{r} \left(\frac{\sinh(kr)}k \right)^{n-1} \dd t }  \left(\frac{\sinh(kr)}k \right)^{n-1} \dd r 
\end{align*}
Finally, the change of variables $y= \lambda\sigma_{n-1}\int_{t=0}^{r} \left(\frac{\sinh(kr)}k \right)^{n-1} \dd t $ implies that 
      \begin{align*}
 \E[\vol^{(\mathcal{H}^n_k)}({\mathcal C}_{x_0,\lambda}^{(\mathcal{H}^n_k)})] &= \frac{1}{\lambda} \int_{y=0}^{\infty}e^{-y} \dd y=\frac{1}{\lambda}. 
\end{align*}
\end{proof}

\section{A local change of variables formula of Blaschke-Petkantschin type}\label{sec:BP} 
The key tool for proving Theorem \ref{THM} is an extension of a spherical Blaschke-Petkantschin formula, known in ${\mathbb R}^n$ and in ${\mathcal S}_k^{n}$, to the general setting of a Riemannian manifold.

A Blaschke-Petkantschin formula is a rewriting of the $m$-fold product of the volume measure for $m\le (n+1)$ which is based on a suitable geometric decomposition of a $m$-tuple of points. It provides the calculation of the Jacobian of the associated change of variables in an integral. Classically, the geometric decomposition consists in fixing the linear or affine subspace which contains all the points, integrating over all $m$-tuples in that subspace and then integrating over the Grassmannian of all subspaces, see \cite[Chapter 7]{Wei08}. There exists another kind of formulas of Blaschke-Petkantschin type with a spherical decomposition: the sphere containing all the points is fixed, we integrate over the positions of the points on the sphere and then integrate over the Grassmannian of all subspaces spanned by the sphere and over the radius and center of the sphere. Such formulas have been provided by Miles in both cases of the Euclidean space ${\mathbb R}^n$ \cite[Formula (70)]{Mil70}, see also \cite[Proposition 2.2.3]{Mol94}, and of the sphere ${\mathcal S}_k^n$ \cite[Theorem 4]{Mil71bis}. 

In this paper, we concentrate on the particular case of the rewriting of the $n$-fold product of the volume measure with a slightly different spherical geometric transformation: we fix the circumscribed sphere containing both a fixed origin and the $n$ points. Let us consider the example of $\R^n$ first: for almost all given points $x_1,\dots x_n$, there exists a unique circumscribed ball associated with $0,x_1,\dots,x_n$ that we will denote by $\bo^{({\mathbb R}^n)}(0,x_1,\dots,x_n)$. This makes it possible to define the change of variables $(x_1,\dots,x_n) \mapsto (r,u,u_1,\dots,u_n)$ given by the relations \[ x_i = ru+ ru_i \]
where $(r,u)\in \R_+ \times \sph^{n-1}$ are the polar coordinates of the circumcenter of $0,x_1,\dots,x_n$, and $u_1,\dots,u_n \in (\sph^{n-1} )^{n}$ indicate the positions of the points $x_1,\dots,x_n$ on the boundary of $\bo^{({\mathbb R}^n)}(0,x_1,\dots,x_n)$. Now this approach can be extended to any compact Riemannian manifold M. We start by fixing an origin $x_0\in M$ and we recall that thanks to the Hopf-Rinow theorem \cite[Theorem 52]{Ber03}, the compacity of $M$ guarantees that $M$ is geodesically complete, which implies that the exponential map $\exp_{x_0}$ is defined on the whole tangent space $T_{x_0}M$. Let us define the following set ${\mathcal E}_n^{(M)}$ 
\begin{align*}
 {\mathcal E}_n^{(M)}
&= \{ (r, u, u_1,\dots,u_n): r>0, u\in T_{x_o}M \mbox{ with $\|u\|_{x_0}=1$},\\ &\hspace*{4.3cm} u_i\in T_{\exp_{x_0}(ru)}M \mbox{ and $\|u_i\|_{\exp_{x_0}(ru)}=1$ $\forall\;i=1,\dots,n$} \}.
\end{align*}
We then consider the following transformation $\Phi_{x_0}^{(M)}$, see Figure \ref{fig:BP}:
$$\Phi_{x_0}^{(M)}:\left\{\begin{array}{lll}{\mathcal E}_n^{(M)}&\longrightarrow&M^n\\(r,u,u_1,\cdots,u_n)&\longmapsto&(x_1,\cdots,x_n) \end{array}\right.$$
where
\begin{equation}\label{change}
x_i = \exp_{|\exp_{x_0}(ru)}(ru_i). 
\end{equation}
\begin{figure}[H]
\centering
\begin{tikzpicture}[line cap=round,line join=round,>=triangle 45,x=2.0cm,y=2.0cm]
\clip(-2.,-2.) rectangle (2.,1.7);
\draw [->] (1.317104704423232,1.0801321756759872) -- (0.8350020080213368,0.947553934165466);
\draw [->] (1.317104704423232,1.0801321756759872) -- (1.4496829459337532,0.598029479274092);
\draw [->] (0.,0.) -- (-0.47569000304322695,-0.1540098081445942);
\draw [dash pattern=on 1pt off 1pt,rounded corners=10pt] (-0.47569000304322695,-0.1540098081445942)-- (-0.8278962297811208,-0.3156579059115343)-- (-1.0228963147087893,-0.4696053413807462)-- (-1.1889468727758623,-0.8057348983381241)-- (-1.1871069125426152,-1.0546055961637515)-- (-1.1027283453431442,-1.2982451678019533);
\draw [->] (-1.1027283453431442,-1.2982451678019533) -- (-0.8854418303841418,-1.748563466804971);
\draw [->] (-1.1027283453431442,-1.2982451678019533) -- (-0.6511816995113886,-1.0835229597328042);
\draw (0.06,0.04355277684996016) node[anchor=north west] {$z= \exp_{x_0}(Ru)$};
\draw (1.3,1.4) node[anchor=north west] {$x_0$};
\draw (0.9,1.25) node[anchor=north west] {$u$};
\draw (1.5,1.069869013311373) node[anchor=north west] {$v_l^{(0)}$};
\draw (-0.3557907610088712,0.25) node[anchor=north west] {$u_i$};
\draw (-0.04,1.) node[anchor=north west] {$R$};
\draw (-1.2897385361887568,-0.2) node[anchor=north west] {$R$};
\draw (-1.6,-1.331710980008333) node[anchor=north west] {$v_0^{(i)}(R)$};
\draw (-0.9202646910626483,-1.15) node[anchor=north west] {$v_l^{(i)}(R)$};
\draw (-1.4128964845641263,-1.1) node[anchor=north west] {$x_i$};
\draw [dash pattern=on 1pt off 1pt,rounded corners=10pt] (0.8350020080213368,0.947553934165466)-- (0.47552539052487347,0.8235531165606339)-- (0.198420006680292,0.5669740574452807)-- (0.,0.);
\begin{scriptsize}
\draw [color=black] (0.,0.)-- ++(-2.5pt,-2.5pt) -- ++(5.0pt,5.0pt) ++(-5.0pt,0) -- ++(5.0pt,-5.0pt);
\draw [color=black] (1.317104704423232,1.0801321756759872)-- ++(-2.5pt,-2.5pt) -- ++(5.0pt,5.0pt) ++(-5.0pt,0) -- ++(5.0pt,-5.0pt);
\draw [color=black] (-1.1027283453431442,-1.2982451678019533)-- ++(-2.5pt,-2.5pt) -- ++(5.0pt,5.0pt) ++(-5.0pt,0) -- ++(5.0pt,-5.0pt);
\end{scriptsize}
\end{tikzpicture}
\caption{The geometric decomposition provided by $\Phi_{x_0}^{(M)}$}
\label{fig:BP}
\end{figure}
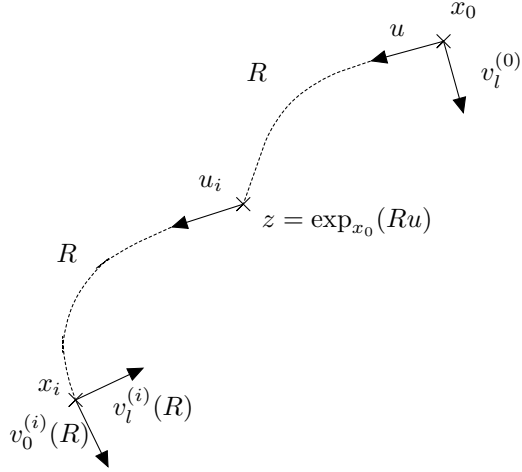
Let us denote by $\widetilde{\mathcal J}_{x_0}^{(M)}$ the Jacobian function associated with the transformation $\Phi_{x_0}^{(M)}$ so that the following equality of measures is satisfied:
\begin{equation}
  \label{eq:egalitemesuressec5}
\dvol^{(M)}(x_1) \dots \dvol^{(M)}(x_n)=|\widetilde{\mathcal J}_{x_0}^{(M)}(r,u,u_1,\cdots,u_n)|{\mathrm d}r \dvol^{({\mathcal S}_1^{n-1})}(u)\dvol^{({\mathcal S}_1^{n-1})}(u_1) \dots \dvol^{({\mathcal S}_1^{n-1})}(u_n).  
\end{equation}
We recall that as soon as $\widetilde{\mathcal J}_{x_0}^{(M)}$ is different from zero, the transformation $\Phi_{x_0}^{(M)}$ is a local diffeomorphism thanks to the inverse function theorem.

In Proposition \ref{const} we provide explicit formulas for $\widetilde{\mathcal J}_{x_0}^{(M)}$ in the three cases when $M$ has constant sectional curvature. 
\begin{prop}\label{const}
For every $r>0$, $(n+1)$-tuple $(u,u_1,\dots,u_n)$ of unit-vectors, $x_0$ in either $\R^n$ or ${\mathcal S}_k^n$ or ${\mathcal H}_k^n$ and every $k>0$, we get
\begin{align}
\widetilde{\mathcal J}_{x_0}^{(\R^n)}(r,u,u_1,\dots,u_n) &= n! \Delta(u,u_1,\dots,u_n) r^{n^2-1},\label{eq:formuleBPeuc}\\ 
\widetilde{\mathcal J}_{x_0}^{({\mathcal S}_k^n)}(r,u,u_1,\dots,u_n) &= n! \Delta(u,u_1,\dots,u_n) \left(\frac{\sin(kr)}{k}\right)^{n^2-1},\label{eq:formuleBPsph}\\ 
\widetilde{\mathcal J}_{x_0}^{({\mathcal H}_k^n)}(r,u,u_1,\dots,u_n) &= n! \Delta(u,u_1,\dots,u_n) \left(\frac{\sinh(kr)}{k}\right)^{n^2-1}.\label{eq:formuleBPhyp}
\end{align}
\end{prop}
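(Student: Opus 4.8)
The plan is to compute the Jacobian determinant of $\Phi_{x_0}^{(M)}$ pointwise, using that in the three model spaces the Jacobi fields that control the differential of the exponential map via Lemma~\ref{lem:exodocarmo} are completely explicit. Let $\mathfrak{s}(r)$ denote $r$, $\frac{\sin(kr)}{k}$ or $\frac{\sinh(kr)}{k}$ according as $M=\R^n$, $\sph_k^n$ or $\mathcal{H}_k^n$, and recall that in constant sectional curvature a Jacobi field $J$ along a unit-speed geodesic $\gamma$ with $J(0)=0$ and $J'(0)=w\perp\gamma'(0)$ equals $J(t)=\mathfrak{s}(t)\,W(t)$, with $W$ the parallel transport of $w$ along $\gamma$ (in agreement with \eqref{field}). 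It suffices to treat $r\in(0,R_{\mbox{\tiny inj}})$, with $R_{\mbox{\tiny inj}}=+\infty$ except on $\sph_k^n$ where it is $\pi/k$; this is the only range needed later, and the identity extends to all $r>0$ by continuity. As a preliminary I would trivialise the bundle underlying $\mathcal{E}_n^{(M)}$ by identifying each fiber $T_{\exp_{x_0}(ru)}M$ with $T_{x_0}M$ through parallel transport along $t\mapsto\exp_{x_0}(tu)$; being an isometry, this preserves the measures $\dvol^{(\sph^{n-1})}(u_i)$ and turns $\widetilde{\mathcal J}_{x_0}^{(M)}$ into the determinant of a genuine differential of a map between product manifolds.

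Fix $(r,u,u_1,\dots,u_n)$ with $\Delta(u,u_1,\dots,u_n)\ne0$, set $z=\exp_{x_0}(ru)$, denote by $\gamma_u$ and $\gamma_{u_i}$ the geodesics issued from $x_0$ in direction $u$ and from $z$ in direction $u_i$, and let $v^{(0)}=\gamma_u'(r)\in T_zM$. Taking an orthonormal basis $(e_1,\dots,e_{n-1})$ of $u^{\perp}\subset T_{x_0}M$ and letting $(f_1,\dots,f_{n-1})$ be its parallel transports to $z$, the family $(v^{(0)},f_1,\dots,f_{n-1})$ is an orthonormal basis of $T_zM$ since parallel transport preserves $\langle u,e_j\rangle_{x_0}=0$. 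On each $T_{x_i}M$ I would use the orthogonal splitting $T_{x_i}M=(\gamma_{u_i}'(r))^{\perp}\oplus\R\,\gamma_{u_i}'(r)$. By Lemma~\ref{lem:exodocarmo}, the partial differential of $\Phi_{x_0}^{(M)}$ with respect to $u_i$ sends a tangent direction $e$ at $u_i$ to the Jacobi field value $\mathfrak{s}(r)E(r)$ along $\gamma_{u_i}$ ($E$ the parallel transport of $e$), i.e. to $\mathfrak{s}(r)$ times an isometry onto $(\gamma_{u_i}'(r))^{\perp}$, and it affects no $x_j$ with $j\ne i$. Therefore, in orthonormal bases of the $T_{x_i}M$ compatible with the splitting, listing all the ``on-sphere'' target directions before all the radial ones and ordering the domain coordinates as $(u_1,\dots,u_n,r,e_1,\dots,e_{n-1})$, the matrix of $d\Phi_{x_0}^{(M)}$ is block upper-triangular with diagonal blocks $\mathfrak{s}(r)I_{n-1}$ (one for each $u_i$) and a last block $R\in\R^{n\times n}$; hence $\widetilde{\mathcal J}_{x_0}^{(M)}(r,u,u_1,\dots,u_n)=\pm\,\mathfrak{s}(r)^{n(n-1)}\det R$, where the $i$-th row of $R$ lists the radial components $\langle\partial_\sigma x_i,\gamma_{u_i}'(r)\rangle$ for $\sigma$ running through $r,e_1,\dots,e_{n-1}$.

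To evaluate $R$ I would invoke the Gauss lemma instead of computing the $\partial_\sigma x_i$ in full: since $x_i=\exp_z(ru_i)$ with $\|u_i\|_z=1$ we have $d^{(M)}(z,x_i)\equiv r$ along any variation, and differentiating this identity together with $\nabla_{x_i}d^{(M)}(z,\cdot)=\gamma_{u_i}'(r)$ and $\nabla_z d^{(M)}(\cdot,x_i)=-u_i$ gives $\langle\partial_\sigma x_i,\gamma_{u_i}'(r)\rangle=\partial_\sigma r+\langle u_i,\partial_\sigma z\rangle$. Now $\partial_r z=v^{(0)}$, and by Lemma~\ref{lem:exodocarmo} applied to $z=\exp_{x_0}(ru)$ one has $\partial_{e_j}z=\mathfrak{s}(r)f_j$, so the $i$-th row of $R$ is $\bigl(1+\langle u_i,v^{(0)}\rangle,\ \mathfrak{s}(r)\langle u_i,f_1\rangle,\dots,\mathfrak{s}(r)\langle u_i,f_{n-1}\rangle\bigr)$. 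Pulling $\mathfrak{s}(r)$ out of the last $n-1$ columns, $\det R=\mathfrak{s}(r)^{n-1}\det\bigl[(1+\langle u_i,v^{(0)}\rangle)_i\mid(\langle u_i,f_j\rangle)_{i,j}\bigr]$; in the basis $(v^{(0)},f_1,\dots,f_{n-1})$ the $i$-th row of this last matrix is precisely the coordinate vector of $u_i+v^{(0)}$, so its determinant has modulus $n!$ times the volume of the simplex with vertices $0,u_1+v^{(0)},\dots,u_n+v^{(0)}$, i.e. $n!\,\Delta(u,u_1,\dots,u_n)$ after translating by $-v^{(0)}$ and identifying $v^{(0)}$ with $u$ via the parallel transport. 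Since $n(n-1)+(n-1)=n^2-1$, this yields \eqref{eq:formuleBPeuc}, \eqref{eq:formuleBPsph} and \eqref{eq:formuleBPhyp} simultaneously.

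The hard part is not any single computation but keeping the bundle structure of $\mathcal{E}_n^{(M)}$ straight and checking that the off-diagonal block of $d\Phi_{x_0}^{(M)}$ — whose entries involve the derivative of parallel transport and therefore the curvature — genuinely drops out of $\det R$ thanks to the block-triangular form (one also uses that $\langle u_i'(0),u_i\rangle=0$ because parallel transport preserves norms, so this term contributes only to the on-sphere block). Once that is arranged, the remaining steps are the classical Euclidean spherical Blaschke–Petkantschin computation, here made exact by the closed form of constant-curvature Jacobi fields; on a general manifold only the expansion \eqref{field} is available, which is exactly why Theorem~\ref{Jacob} can give only an asymptotic version of this formula.
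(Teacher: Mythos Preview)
Your proof is correct and follows essentially the same route as the paper: both exploit the block-triangular structure of the differential of $\Phi_{x_0}^{(M)}$ coming from the fact that the $u_i$-derivatives land in $(\gamma_{u_i}'(r))^\perp$, and both reduce the remaining $n\times n$ block to the simplex-volume determinant $n!\,\Delta(u,u_1,\dots,u_n)$ times $\mathfrak{s}(r)^{n-1}$. The paper in fact derives the general factorisation $|\widetilde{\mathcal J}_{x_0}^{(M)}|=n!\,\Delta\,|\det C|\prod_i|\det B^{(i)}|$ in the course of proving Theorem~\ref{Jacob} (see \eqref{eq:jac3}) and then specialises to constant curvature where $C$ and the $B^{(i)}$ become $\mathfrak{s}(r)\mathrm{Id}_{n-1}$; your blocks $\mathfrak{s}(r)I_{n-1}$ are exactly the $B^{(i)}$, and your $\det R$ is the paper's $n!\,\Delta\cdot\det C$.

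The one genuine variation is how the radial components $\langle\partial_\sigma x_i,\gamma_{u_i}'(r)\rangle$ are obtained. The paper computes $\partial_r x_i$ and $\partial_u x_i$ as values at $t=r$ of Jacobi fields with nonzero initial value along $\gamma_{u_i}$ and then projects onto $\gamma_{u_i}'(r)$ using \cite[Chapter 5, Proposition 3.6]{Do92}. You instead differentiate the identity $d^{(M)}(z,x_i)\equiv r$ and invoke $\nabla_{x_i}d(z,\cdot)=\gamma_{u_i}'(r)$, $\nabla_z d(\cdot,x_i)=-u_i$, which is a cleaner and curvature-free way to reach the same rows $(1+\langle u_i,v^{(0)}\rangle,\mathfrak{s}(r)\langle u_i,f_j\rangle)$; it also lets you treat $\R^n$, $\sph_k^n$ and $\mathcal{H}_k^n$ uniformly via $\mathfrak{s}$, whereas the paper only details the hyperbolic case and refers to Miles for the others.
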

The equalities \eqref{eq:formuleBPeuc} and \eqref{eq:formuleBPsph} come from an almost-direct adaptation of the proofs of the classical spherical Blaschke-Petkantschin formulas due to Miles in  \cite[Formula (70)]{Mil70} and \cite[Theorem 4]{Mil71bis} respectively. The identity \eqref{eq:formuleBPhyp}  was included in the work of Isokawa for $n=2$ \cite{Iso2000} and $n=3$ \cite{Iso00b} though it was not precisely stated. To the best of our knowledge, the formula in dimension $n$ is new. Its proof is postponed to the end of this section.

In the general case, we are unable to derive an exact formula for $\widetilde{\mathcal J}_{x_0}^{(M)}$. Nevertheless, we provide in Theorem \ref{Jacob} its two-term asymptotic expansion when $r$ tends to 0, which will be enough for our purpose in this paper.
\begin{thm}\label{Jacob}
When $r\to 0$, we get
\begin{equation}\label{JJ} \widetilde{\mathcal J}_{x_0}^{(M)}(r,u,u_1,\dots,u_n) = n! \Delta(u,u_1,\dots,u_n)\left( r^{n^2-1} - \frac{L_{x_0}^{(M)}(u,u_1,\dots,u_n)}6 r^{n^2+1} + o(r^{n^2+1}) \right) \end{equation}
where $L_{x_0}^{(M)}(u,u_1,\dots,u_n) = \sum_{i=1}^n \Ric_{x_0}^{(M)}(u_i) + \Ric_{x_0}^{(M)}(u)$,  $\Ric_{x_0}^{(M)}(u_i)$ stands for $\Ric_{x_0}^{(M)}(P_{\exp_{x_0}(ru)\to x_0}(u_i))$ and $\Delta(u,u_1,\dots,u_n)$ is the Euclidean volume of the $n$-dimensional simplex of $T_{\exp_{x_0}(ru)}M$ spanned by $P_{x_0 \to \exp_{x_0}(ru)}(-u)$ and $u_1,\dots,u_n$.\end{thm}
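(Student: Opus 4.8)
The plan is to compute $\widetilde{\mathcal J}_{x_0}^{(M)}$ directly from its definition as a Jacobian determinant, parametrizing the $n$ columns corresponding to the variables $(u_1,\dots,u_n)$ on one hand and the $n$ columns corresponding to $(r,u)$ on the other. For each fixed index $i$, the map $(r,u_i)\mapsto x_i=\exp_{\exp_{x_0}(ru)}(ru_i)$ is exactly the geodesic spherical chart centered at the moving base point $z=z(r,u)=\exp_{x_0}(ru)$, so the $(n-1)$ tangential derivatives with respect to $u_i$ produce the block whose determinant is (up to sign) $\J_z^{(M)}(r,u_i)$ by \eqref{eq:Jru}; by Lemma \ref{lem:jacobchtsph}(i) applied at the point $z$ this equals $r^{n-1}-\frac{\Ric_z^{(M)}(u_i)}{6}r^{n+1}+o(r^{n+1})$. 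Since $z\to x_0$ as $r\to 0$ and $\Ric^{(M)}$ is continuous, one may replace $\Ric_z^{(M)}(u_i)$ by $\Ric_{x_0}^{(M)}(P_{z\to x_0}(u_i))$ at the cost of an $o(r^{n+1})$ error. The remaining $n$ columns, coming from the derivatives of $x_i$ with respect to $(r,u)$ — i.e. from moving the common base point $z$ along the sphere of radius $r$ around $x_0$ — are where Jacobi fields enter via Lemma \ref{lem:exodocarmo}: differentiating $f(t,s)=\exp_{\mathfrak c(s)}(tV(s))$ at $t=r$ along a curve $\mathfrak c$ through $z$ realizing the variation of $(r,u)$ gives a Jacobi field $J$ along the geodesic $s\mapsto \exp_{x_0}(su)$ (for the radial direction) or along transversal geodesics (for the $u$-directions), evaluated at parameter $r$.

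The next step is to organize these $n^2$ column vectors, all living in $T_{x_i}M$, and factor the determinant. I would first Taylor-expand every Jacobi field appearing, using the expansion \eqref{field} and its companions (the scalar-product versions announced as Lemma \ref{lem:expCBi}): each relevant Jacobi field with $J(0)=0$ behaves like $t\,(\text{parallel transport of }J'(0))+O(t^3)$, and the curvature correction is governed by a sectional curvature $K_{x_0}^{(M)}(\cdot,\cdot)$. After pulling everything back by parallel transport to $T_{x_0}M$ (which is an isometry, hence does not affect the determinant's absolute value), the $n\times n^2$ array of tangent vectors becomes, to leading order, the constant Euclidean configuration whose determinant is $\pm\,\Delta(u,u_1,\dots,u_n)$ times a power of $r$; specifically the leading term should reproduce the Euclidean formula \eqref{eq:formuleBPeuc}, $n!\,\Delta(u,u_1,\dots,u_n)\,r^{n^2-1}$, consistent with Proposition \ref{const}. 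The first-order correction is then extracted by the standard rule $\det(A+\epsilon B)=\det(A)(1+\epsilon\,\mathrm{tr}(A^{-1}B)+\cdots)$: each of the $n$ "spherical" blocks contributes, through its own Jacobian $\J_z^{(M)}(r,u_i)$, a relative correction $-\frac{\Ric_{x_0}^{(M)}(u_i)}{6}r^2$, and the "radial/angular" block contributes, through the Jacobi-field expansions, a relative correction $-\frac{\Ric_{x_0}^{(M)}(u)}{6}r^2$ — the Ricci curvature in the direction $u$ arising because the $n$ columns encoding the motion of $z$ span, infinitesimally, all $n$ directions of $T_{x_0}M$ with $u$ singled out, exactly as in the derivation of \eqref{eq:defscalaire} and Lemma \ref{lem:jacobchtsph}. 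Summing these $n+1$ independent corrections gives the factor $1-\frac{1}{6}\big(\sum_{i=1}^n\Ric_{x_0}^{(M)}(u_i)+\Ric_{x_0}^{(M)}(u)\big)r^2=1-\frac{L_{x_0}^{(M)}(u,u_1,\dots,u_n)}{6}r^2$, which is precisely \eqref{JJ}.

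The main obstacle I anticipate is the bookkeeping for the mixed block: unlike the $n$ spherical blocks, which decouple cleanly because each $x_i$ depends on $(r,u_i)$ through the single chart $\exp_z$, the derivatives with respect to $(r,u)$ act on \emph{all} $x_i$ simultaneously and couple the columns across the index $i$, so one must carefully track how the Jacobi fields along the "inner" geodesics $z\mapsto x_i$ transform the variation of the "outer" geodesic $x_0\mapsto z$. Concretely this means differentiating the composition $\exp_{\exp_{x_0}(ru)}(ru_i)$ in the base point, which requires the derivative of $\exp$ in its base-point argument — handled by Lemma \ref{lem:exodocarmo} with the curve $\mathfrak c$ chosen along the sphere $\{\exp_{x_0}(r\cdot)\}$ — and then a second-order expansion of the resulting vector fields. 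I would manage this by choosing, at the very start, a convenient orthonormal frame: fix an orthonormal basis of $T_{x_0}M$ adapted to $u$ (say $u$ as last vector), use geodesic polar coordinates so that the metric is $dr^2+r^2 d\omega^2+O(r^2)$-corrections controlled by curvature, and propagate this frame by parallel transport along all geodesics involved; in such a frame every entry of $\widetilde{\mathcal J}_{x_0}^{(M)}$ is an explicit scalar product of Jacobi fields of the type covered by Lemma \ref{lem:expCBi}, and the determinant expansion becomes a finite, if lengthy, computation. The uniformity of the $o(r^{n^2+1})$ remainder in $(u,u_1,\dots,u_n)$ — needed later for integration — follows from the uniform versions of the Jacobi-field and volume estimates (Lemma \ref{lem:jacobchtsph}(ii)) together with compactness of the sphere bundle, and I would note this explicitly since it is what makes the expansion usable in the proof of Theorem \ref{THM}.
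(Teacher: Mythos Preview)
Your proposal is correct and follows essentially the same route as the paper: compute the partial derivatives via Lemma \ref{lem:exodocarmo}, recognize that the $(n-1)$ derivatives in $u_i$ produce a block $B^{(i)}$ with $\det B^{(i)}=r^{n-1}-\tfrac{\Ric_z^{(M)}(u_i)}{6}r^{n+1}+o(r^{n+1})$ (your identification with $\J_z^{(M)}(r,u_i)$ is exactly right), expand via Lemma \ref{lem:expCBi}, and collect the $n+1$ Ricci corrections. The one concrete device the paper uses to tame what you call the ``mixed block'' is worth knowing: in the frame $\{v_1^{(i)}=u_i,\dots,v_n^{(i)}\}$ at each $x_i$, the $u_i$-columns have vanishing first component while the $(r,u)$-columns have first components $1+u_i^1$ and $\langle u_i,J_k^{(0)}(r)\rangle_z$; a single row permutation then makes the Jacobian block lower-triangular, and the top-left $n\times n$ block factors as the simplex matrix (determinant $n!\Delta$) times a matrix $C$ whose determinant is $r^{n-1}-\tfrac{\Ric_{x_0}^{(M)}(u)}{6}r^{n+1}+o(r^{n+1})$ --- this is how the $n!\Delta$ factor and the $\Ric_{x_0}^{(M)}(u)$ term emerge cleanly, rather than via the perturbation formula $\det(A+\epsilon B)=\det A\,(1+\epsilon\,\mathrm{tr}(A^{-1}B))$ you suggest.
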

Note that the volume $ \Delta(u,u_1,\dots,u_n)$ does not depend on the geometry of the manifold, because the vectors involved  lie in the tangent space at the circumcenter $\exp_{x_0}(ru)$, which is naturally identified with $\R^n$.

As expected, since the manifold can be approximated at the first order by the tangent space at $x_0$, the first term of the expansion \eqref{JJ} corresponds to the Euclidean case. Moreover, when $M$ has constant sectional curvature, the function of $r$ in \eqref{JJ}  is consistent with the expansions of $\widetilde{\mathcal J}_{x_0}^{({\mathcal S}_k^n)}$ and $\widetilde{\mathcal J}_{x_0}^{({\mathcal H}_k^n)}$ for small $r$.
	
In particular, for $r$ small enough, the Jacobian $\widetilde{\mathcal J}_{x_0}^{(M)}$ is non zero almost everywhere. Thus, the inverse function theorem implies that $\Phi_{x_0}^{(M)}$ is a local $C^1$-diffeomorphism. This remark combined with Assumption \mbox{($\mbox{A}_4$)} guarantees that $\Phi_{x_0}^{(M)}$, as an injective local $C^1$-diffeomorphism, defines a change of variables of Blaschke-Petkantschin type for $r$ small enough. 

Theorem \ref{Jacob} is a close companion to the main result from \cite{AurelieNote} which states an explicit calculation and asymptotic estimate for a change of variables in $M$ of Blaschke-Petkantschin flavor. In \cite{AurelieNote}, the underlying application consists in associating to a $(n+1)$-tuple of points in $M$ 
 its circumcenter, circumradius, and $(n+1)$ points on the circumsphere whereas in Theorem \ref{Jacob}, the point $x_0$ is fixed once and for all and we associate to a $n$-tuple $(x_1,\dots,x_n)$ the circumcenter, circumradius and $n$ points on the circumsphere of the $(n+1)$-tuple $(x_0,\cdots,x_n)$. This induces a new difficulty in the calculation of partial derivatives since the parameter $r$ appears in both the reference point and the entry of the exponential map. This will require an extra chain rule and the use of Lemma \ref{lem:exodocarmo} in all its power. Moreover, another refinement here is that we require the uniformity of the expansion of the Jacobian with respect to $x_0$ and the vectors $u$, $u_1,\dots,u_n$. This is done at the end of the section in Proposition \ref{prop:jacobunif}. 
\begin{proof}[Proof of Theorem \ref{Jacob}]
Let us fix $(r,u,u_1,\dots,u_n)$ and let $z=\exp_{x_0}(ru)$ be the circumcenter of $x_0,\dots,x_n$. We endow the tangent space $T_{x_0}M$ with an orthonormal basis $\mathcal{V}^{(0)} =\{ v_1^{(0)}, \dots, v_n^{(0)} \}$ where $v_1^{(0)}=u$. We consider in $T_zM$, the orthonormal basis $\mathcal{V}^{(0)}(r) =\{ v_1^{(0)}(r), \dots, v_n^{(0)}(r) \}$ obtained by parallel transport of $\mathcal{V}^{(0)}$ along $\gamma_0(t)= \exp_{x_0}(tu)$ and we write, for each $i$,  
\[ u_i= \sum_{j=1}^{n} u_i^j v_j^{(0)}(r) .\]
Now, for each $i$, consider an orthonormal basis of $T_z M$, $\mathcal{V}^{(i)} = \{ v_1^{(i)}, \dots v_n^{(i)} \}$, with $v_1^{(i)}=u_i$. We endow $T_{x_i} M$, with the orthonormal basis $\mathcal{V}^{(i)}(r) = \{ v_1^{(i)}(r), \dots v_n^{(i)}(r) \}$ obtained by parallel transport of $\mathcal{V}^{(i)}$ along $\gamma_i(t)= \exp_z(ru_i)$. 

\noindent \textit{Step 1: derivatives with respect to $r$.} Let us consider the vector of size $n$, $\left( \frac{\partial x_i}{\partial r} \right)$ whose $l$th component is the projection onto $v^{(i)}_l(r)$ of the derivative of $x_i$ with respect to $r$. This is obtained by applying Lemma \ref{lem:exodocarmo} to $\gamma= \gamma_i$, 
$\mathfrak{c}(s)= \exp_{x_0}((r+s)u)$
and $V(s)=(r+s)u_i(s)$, 
where $u_i(s)$ denotes the parallel transport of $u_i$ along $\mathfrak{c}$. Then,  
\begin{equation}\label{derivR}
\left( \frac{\partial x_i}{\partial r} \right) = J^{(i)}_{r}(1)
\end{equation}
where $J^{(i)}_{r}$ is the unique Jacobi field along $\tilde{\gamma}_i(t) = \exp_{z}(tru_i)$, with $J^{(i)}_{r}(0)= \mathfrak{c}'(0) = u(r)$, parallel transport of $u$ along $\gamma_0$ and $J^{(i)\prime}_{r}(0)= V'(0) = u_i$. 
We deduce from \eqref{derivR} and \cite[Chapter 5, Proposition 3.6]{Do92} that the first component of the vector $\left( \frac{\partial x_i}{\partial r} \right)$ is 
\begin{align*}
\left( \frac{\partial x_i}{\partial r} \right)_l &=\langle v_1^{(i)}(r), J^{(i)}_{r}(1) \rangle_{x_i}\\
																			& = \langle \frac{\tilde{\gamma}_i(1)}{r},  J^{(i)}_{r}(1) \rangle_{x_i} \\
																			&= \langle u_i, J^{(i)}_{r}(0) \rangle_{z} + \langle u_i, J^{(i)\prime}_{r}(0) \rangle_{z}\\
																			&= \langle u_i, u(r) \rangle_{z} + \langle u_i, u_i \rangle_{z} \\
																			&= u_i^1 +1.
\end{align*}
Consequently,
\begin{equation}\label{eq:matDerivR}
\left( \frac{\partial x_i}{\partial r} \right)=  \left(\begin{array}{lll}& 1+ u_i^1& \\\hline\\ & \mbox{ n/a } & \\ & & \end{array}\right)
\end{equation} 
where `n/a' only means that the vector under its first component does not need to be explicit in the rest of the proof. 

\noindent \textit{Step 2: derivatives with respect to $u$.} Let us consider $\left( \frac{\partial x_i}{\partial u} \right)$, whose entry $\left( \frac{\partial x_i}{\partial u} \right)_{l,m}$ is the projection onto $v^{(i)}_l(r)$ of the derivative of $x_i$ with respect to $u$ in the direction $v_{m+1}^{(0)}$. As before, we compute these derivatives, using Lemma \ref{lem:exodocarmo}. To this end, let us consider 
$\mathfrak{c}(s)= \exp_{x_0}(r(u+sv^{(0)}_{m+1}))$ and $V(s)= u_i(s)$,
where $u_i(s)$ is the parallel transport of $u_i$ along $\mathfrak{c}$. In particular, note that $V'(s)=0$. The derivative of $\mathfrak{c}$ requires to apply again Lemma \ref{lem:exodocarmo} to $\gamma=\gamma_0$, $\tilde{\mathfrak{c}}(s) = x_0$ and $\tilde{V}(s) = u+sv^{(0)}_{m+1}$. Thus, we have 
\begin{equation}\label{eq:cprime}
\mathfrak{c}'(0) = J^{(0)}_{m+1}(r)
\end{equation}
where $J^{(0)}_{m+1}$ is the unique Jacobi field along $\gamma_0$ with $J^{(0)}_{m+1}(0)=0$ and $J^{(0)\prime}_{m+1}(0)=v^{(0)}_{m+1}$. Now, Lemma \ref{lem:exodocarmo}, applied to $\mathfrak{c}$ and $V$ given above, implies
\begin{equation}\label{eq:derivU}
\left( \frac{\partial x_i}{\partial u} \right)_{l,m} = \langle v^{(i)}_l(r), J^{(i)}_{r,(m+1)}(r) \rangle_{x_i} 
\end{equation}
where $J^{(i)}_{r,(m+1)}$ is the unique Jacobi field along $\gamma_i$ with $J^{(i)}_{r,(m+1)}(0)= J^{(0)}_{m+1}(r)$, given by \eqref{eq:cprime} and $J^{(i)\prime}_{r,(m+1)}(0) =0$. Thanks to \eqref{eq:derivU} and \cite[Chapter 5, Proposition 3.6]{Do92}, we are able to compute the first line of $\left( \frac{\partial x_i}{\partial u} \right)$, 
\begin{align*}
\left( \frac{\partial x_i}{\partial u} \right)_{1,m}& = \langle v^{(i)}_1(r), J^{(i)}_{r,(m+1)}(r) \rangle_{x_i} \\
& = \langle u_i , J^{(0)}_{m+1}(r) \rangle_z + r\langle u_i , J^{(0)\prime}_{m+1}(r) \rangle_z \\
&= \langle u_i , J^{(0)}_{m+1}(r) \rangle_z 
\end{align*}
that is 

\begin{equation}\label{eq:matDerivU}
\left( \frac{\partial x_i}{\partial u} \right)=  \left(\begin{array}{ccc}
																									\langle u_i , J^{(0)}_{2}(r) \rangle_z	& \cdots &\langle u_i , J^{(0)}_{n}(r) \rangle_z \\ \hline \\
																									 ~~&~~&~~\\
																									~~&\mbox{n/a}&~~\\
																									~~&~~&~~
																										\end{array}\right).																			
\end{equation}

\noindent \textit{Step 3: derivatives with respect to $u_i$.} We consider the submatrix $\left( \frac{\partial x_i}{\partial u_i} \right)$ of size $n\times (n-1)$ whose entry $\left( \frac{\partial x_i}{\partial u_i} \right)_{l,m}$ is the projection onto $v^{(i)}_l(r)$ of the derivative of $x_i$ with respect to $u_i$ in the direction $v^{(i)}_{m+1}$. We apply again Lemma \ref{lem:exodocarmo} to $\gamma_i$,
$\mathfrak{c}(s) = x_0$  and 
$V(s) = u_i +s v^{(i)}_{m+1}$,
then
\begin{equation}\label{eq:derivUi}
\left( \frac{\partial x_i}{\partial u_i} \right)_{l,m} = \langle v^{(i)}_l(r), \tilde{J}^{(i)}_{m+1}(r) \rangle_{x_i},
\end{equation}
where $\tilde{J}^{(i)}_{m+1}$ is the unique Jacobi field along $\gamma_i$ with $\tilde{J}^{(i)}_{m+1}(0)=0$ and $\tilde{J}^{(0)\prime}_{m+1}(0)=v^{(i)}_{m+1}$.
An application of \cite[Chapter 5, Proposition 3.6]{Do92} shows that $\langle v_1^{(i)}(r), \tilde{J}_m^{(i)}(r)\rangle_{x_i} =0$ which means that $\tilde{J}_m^{(i)}$ is a normal Jacobi field. This implies, with \eqref{eq:derivUi}, 
\begin{equation}\label{eq:matDerivUi}
  \left( \frac{\partial x_i}{\partial u_i} \right) = \left(\begin{array}{lll} & {\mathbf 0}_{n-1} & \\\hline\\ & B^{(i)} & \\ & & \end{array}\right).
\end{equation} 
where $B^{(i)}$ denotes the $(n-1)\times (n-1)$ matrix with entries 

 \begin{equation}\label{eq:coefB} B^{(i)}_{k,m} = \langle v^{(i)}_{k+1}(r), \tilde{J}^{(i)}_{m+1}(r) \rangle_{x_i}. \end{equation}

\noindent \textit{Step 4: rewriting of the Jacobian determinant.} The Jacobian determinant $\widetilde{\J}_{x_0}^{(M)}$ can be written as

$$\widetilde{\J}_{x_0}^{(M)}(r,u,u_1,\dots,u_n) =\det \left(\begin{array}{cccccc}
\cline{3-3} 
\frac{\partial x_1}{\partial r}&\frac{\partial x_1}{\partial u}&\multicolumn{1}{|c|}{\frac{\partial x_1}{\partial u_1}}&~&\multirow{2}{0cm}{\text{\bf 0}}&~\\
\cline{3-3}
 & & & & &\\
\cline{4-4}
\frac{\partial x_2}{\partial r}&\frac{\partial x_2}{\partial u}&~&\multicolumn{1}{|c|}{\frac{\partial x_2}{\partial u_2}}&~~&~\\
\cline{4-4}
 & & & & &\\
\vdots&\vdots&~&\multirow{2}{0cm}{\text{\bf 0}}~~&\ddots&~\\
 & & & & &\\
\cline{6-6}
\frac{\partial x_n}{\partial r}&\frac{\partial x_n}{\partial u}&~&~&~&\multicolumn{1}{|c|}{\frac{\partial x_n}{\partial u_n}}\\
\cline{6-6}
\end{array}\right)$$
Now, thanks to \eqref{eq:matDerivR}, \eqref{eq:matDerivU} and \eqref{eq:matDerivUi}, we obtain 
\begin{equation}\label{eq:jac1}
\widetilde{\J}_{x_0}^{(M)}(r,u,u_1,\dots,u_n)=\det \left(\begin{array}{cccc|ccc|ccc|ccc} 
1+u_1^1&\langle u_1 , J^{(0)}_{2}(r) \rangle_z &\cdots & \langle u_1 , J^{(0)}_{n}(r) \rangle_z &&{\mathbf 0}&& &{\mathbf 0}& & &{\mathbf 0}&\\\hline&&&&&&&&&&&&\\
&&\mbox{n/a}&&& B^{(1)}&& &{\mathbf 0}& &&{\mathbf 0}&\\&&&&&&&&&&&&\\\hline &&&&&&&&&&&&\\
\vdots&&\vdots&& &{\mathbf 0}&& 
&\ddots& &&{\mathbf 0}& \\ &&&&&&&&&&&&\\\hline
1+u_n^1&\langle u_1 , J^{(0)}_{2}(r) \rangle_z&\cdots&\langle u_1 , J^{(0)}_{n}(r) \rangle_z&&{\mathbf 0}&& & {\mathbf 0}& &&{\mathbf 0}&\\\hline&&&&&&&&&&&&\\
&&\mbox{n/a}&&&{\mathbf 0}& & &{\mathbf 0}& &&B^{(n)}&\\&&&&&&&&&&&&
\end{array}\right)
\end{equation}

where for sake of simplicity, we have written ${\mathbf 0}$ for any zero matrix independently of its size. Then, we apply a permutation on lines so that the lines $(1+u_i^1|\langle u_i , J^{(0)}_{2}(r) \rangle_z,\cdots,\langle u_i , J^{(0)}_{n}(r) \rangle_z)$ appear in the first
$n$ lines of a new matrix which is a block lower triangular matrix and which has the same determinant as the Jacobian determinant up to a possible minus sign: 
\begin{equation}\label{eq:jac2}
|\widetilde{\J}_{x_0}^{(M)}(r,u,u_1,\dots,u_n)|= |\det \left(\begin{array}{ccccccc}
\cline{1-4}
\multicolumn{1}{|c}{1+u_1^1}&\langle u_1 , J^{(0)}_{2}(r) \rangle_z&\cdots&\multicolumn{1}{c|}{\langle u_1 , J^{(0)}_{n}(r) \rangle_z}&~~&~~&~~\\
\multicolumn{1}{|c}{\vdots}&\vdots&~&\multicolumn{1}{c|}{\vdots}&~~&{\bf 0}&~~\\
\multicolumn{1}{|c}{1+u_n^1}& \langle u_n , J^{(0)}_{2}(r) \rangle_z&\cdots&\multicolumn{1}{c|}{\langle u_n , J^{(0)}_{n}(r) \rangle_z}& ~~&~~&~~\\
\cline{1-5}
&~~&~~&~~&\multicolumn{1}{|c|}{B^{(1)}}&~~&~~\\
\cline{5-5}\\
&~~&\mbox{n/a}&~~&~~&\ddots&~~\\
\cline{7-7}
&~~&~~&~~&~~&~~&\multicolumn{1}{|c|}{B^{(n)}}\\
\cline{7-7}
\end{array}\right)|. 
\end{equation}
Let us notice now that 
\begin{equation}
\langle u_i , J^{(0)}_{k}(r) \rangle_z = \sum_{j=1}^{n} u_i^j \langle v_j^{(0)}(r) , J^{(0)}_{k}(r) \rangle_z =  \sum_{j=2}^{n} u_i^j \langle v_j^{(0)}(r) , J^{(0)}_{k}(r)\rangle_z 
\end{equation}
since, from \cite[Chapter 5, Proposition 3.6]{Do92}, $\langle v_1^{(0)}(r) , J^{(0)}_{2}(r)\rangle_z=0$. It follows that 
\begin{equation}\label{eq:JacProd}
 \left( \begin{array}{cccc}
         1+u_1^1 & \langle u_1 , J^{(0)}_{2}(r) \rangle_z&  \cdots &\langle u_1 , J^{(0)}_{n}(r) \rangle_z \\
           \vdots & \vdots &~~&\vdots \\
         1+u_n^1 & \langle u_n , J^{(0)}_{2}(r) \rangle_z & \cdots &\langle u_n , J^{(0)}_{n}(r) \rangle_z
				\end{array} \right) =  \left( \begin{array}{cccc}
         1+u_1^1 & u_1^2&  \cdots & u_1^n \\
           \vdots & \vdots &~~&\vdots \\
         1+u_n^1 & u_n^2 & \cdots &u_n^n
				\end{array} \right) \times C 
\end{equation}
where 
\begin{equation}\label{eq:matC} C= \left(    \begin{array}{cccc}
                 1& 0& \cdots &0 \\
								 0&  \langle v_2^{(0)}(r) , J^{(0)}_{2}(r) \rangle_z& \cdots & \langle v_2^{(0)}(r) , J^{(0)}_{n}(r) \rangle_z \\
						     \vdots& \vdots& ~~&\vdots \\
								0&  \langle v_n^{(0)}(r) , J^{(0)}_{2}(r) \rangle_z& \cdots & \langle v_n^{(0)}(r) , J^{(0)}_{n}(r) \rangle_z
						 \end{array} \right) .\end{equation}

Inserting \eqref{eq:JacProd} into \eqref{eq:jac2}, we can write

\begin{align}
|\widetilde{\J}_{x_0}^{(M)}(r,u,u_1,\dots,u_n)|& = | \det \left( \begin{array}{cccc}
         1+u_1^1 & u_1^2&  \cdots & u_1^n \\
           \vdots & \vdots &~~&\vdots \\
         1+u_n^1 & u_n^2 & \cdots &u_n^n
				\end{array} \right) | \times |\det C| \prod_{i=1}^{n} |\det B^{(i)}| \notag \\
				&=n! \Delta(u,u_1,\dots,u_n)  |\det C| \prod_{i=1}^{n} |\det B^{(i)}| .\label{eq:jac3}
\end{align}

\noindent \textit{Step 5: expansion of the Jacobian determinant.} We derive from \eqref{eq:jac3} the expansion of the Jacobian determinant $\widetilde{\J}_{x_0}^{(M)}$ for small values of $r$. This requires to expand the determinants of $C$ and $B^{(i)}$. To this end, we first expand in Lemma \ref{lem:expCBi} below, the coefficients of $C$, given by \eqref{eq:matC} and $B^{(i)}$, given by \eqref{eq:coefB}. Here and in the sequel, $\reste_{*}(x_0,r,u,u_1,\dots,u_n)$ denotes a generic function that tends to $0$ when $r$ tends to $0$. 

\begin{lem}\label{lem:expCBi} 
The coefficients of $C$ satisfy for $l,m=2,\dots,n$ 
\begin{align}
 C_{m,m} &= \langle v_m^{(0)}(r) , J^{(0)}_{m}(r) \rangle_z = r-\frac{K_{x_0}^{(M)}(u,v_m^{(0)})}6r^3 +r^3\reste_{m,m}^{(0)}(x_0,r,u,u_1,\dots,u_n),\label{eq:Cdiag}\\
 C_{l,m} &= \langle v_l^{(0)}(r) , J^{(0)}_{m}(r) \rangle_z = r^2 \reste_{l,m}^{(0)}(x_0,r,u,u_1,\dots,u_n), l\neq m\label{eq:Cnondiag}.
\end{align}
For $1\le i \le n$, the coefficients of $B^{(i)}$ satisfy for $l,m=1,\dots,n$, 
\begin{align}
B^{(i)}_{m,m} &= \langle v^{(i)}_{m+1}(r), \tilde{J}^{(i)}_{m+1}(r) \rangle_{x_i}= r-\frac{K_z^{(M)}(u_i,v_{m+1}^{(i)})}6r^3 +r^3\reste_{m,m}^{(i)}(x_0,r,u,u_1,\dots,u_n)\label{eq:Bidiag}, \\
B^{(i)}_{l,m} &= \langle v^{(i)}_{l+1}(r), \tilde{J}^{(i)}_{m+1}(r) \rangle_{x_i}= r^2\reste_{l,m}^{(i)}(x_0,r,u,u_1,\dots,u_n), l\neq m \label{eq:Binondiag} ,
\end{align}
where $K_{x_0}^{(M)}$ and $K_z^{(M)}$ denotes the sectional curvatures of $M$ at $x_0$ and $z$ respectively.  
\end{lem}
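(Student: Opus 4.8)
The plan is to expand each of the four scalar products in Lemma~\ref{lem:expCBi} by Taylor-expanding the relevant Jacobi field at $t=r$ around $t=0$, using the Jacobi equation~\eqref{eqJacobi} to compute the first few derivatives. Consider the diagonal term~\eqref{eq:Cdiag} first. The Jacobi field $J^{(0)}_m$ along $\gamma_0$ satisfies $J^{(0)}_m(0)=0$, $J^{(0)\prime}_m(0)=v_m^{(0)}$, and by~\eqref{eqJacobi}, $J^{(0)\prime\prime}(t)=\mathcal{R}^{(M)}_{\gamma_0(t)}(\gamma_0'(t),J^{(0)}_m(t))\gamma_0'(t)$; since $J^{(0)}_m(0)=0$, we get $J^{(0)\prime\prime}_m(0)=0$, and differentiating once more gives $J^{(0)\prime\prime\prime}_m(0)=\mathcal{R}^{(M)}_{x_0}(u,v_m^{(0)})u$ (the derivatives of $\gamma_0'$ and the curvature tensor do not contribute at this order because they are multiplied by $J^{(0)}_m(0)=0$ or $J^{(0)\prime}_m(0)$ appears only once). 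Hence by Taylor's theorem
\[ J^{(0)}_m(r) = r\,v_m^{(0)}(r) + \frac{r^3}{6}\,P_{x_0\to z}\!\left(\mathcal{R}^{(M)}_{x_0}(u,v_m^{(0)})u\right) + r^3\,\reste(x_0,r,u,u_1,\dots,u_n), \]
where I have transported everything to $T_zM$ via the parallel transport along $\gamma_0$, which is legitimate because parallel transport is a linear isometry and the covariant derivatives are taken along $\gamma_0$. Taking the scalar product with $v_m^{(0)}(r)$ and using~\eqref{eq:defseccurv}, i.e. $\langle v_m^{(0)},\mathcal{R}^{(M)}_{x_0}(u,v_m^{(0)})u\rangle_{x_0} = -K_{x_0}^{(M)}(u,v_m^{(0)})$, yields~\eqref{eq:Cdiag}. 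For the off-diagonal term~\eqref{eq:Cnondiag}, the same expansion shows $\langle v_l^{(0)}(r),J^{(0)}_m(r)\rangle_z = r\langle v_l^{(0)},v_m^{(0)}\rangle_{x_0} + O(r^3) = O(r^3)$ when $l\ne m$; in fact the statement only claims $r^2\reste$, which is weaker, so there is slack here --- the leading term vanishes by orthonormality and the remainder is $r^2$ times something tending to $0$.

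The treatment of $B^{(i)}$ in~\eqref{eq:Bidiag}--\eqref{eq:Binondiag} is entirely parallel: here $\tilde J^{(i)}_{m+1}$ is a Jacobi field along $\gamma_i(t)=\exp_z(tu_i)$ with $\tilde J^{(i)}_{m+1}(0)=0$ and $\tilde J^{(i)\prime}_{m+1}(0)=v_{m+1}^{(i)}$, so the same three-term Taylor expansion at $t=r$ applies, now with base point $z$ instead of $x_0$ and with the curvature tensor $\mathcal{R}^{(M)}_z$; we again transport to $T_{x_i}M$ along $\gamma_i$ and use~\eqref{eq:defseccurv} to get the sectional curvature $K_z^{(M)}(u_i,v_{m+1}^{(i)})$. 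One subtlety worth noting is that these Jacobi fields are \emph{normal} (orthogonal to $\gamma_i'$) because they vanish at $0$ and their initial derivative $v_{m+1}^{(i)}$ is orthogonal to $v_1^{(i)}=u_i$; this was already observed in Step~3 and it guarantees that~\eqref{eq:Bidiag}--\eqref{eq:Binondiag} only involve the indices $m=1,\dots,n$ coming from the $(n-1)$-dimensional normal complement, consistent with the size of the matrix $B^{(i)}$.

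The main obstacle --- and the reason the statement is phrased with the generic ``$\reste$'' functions rather than explicit bounds --- is controlling the remainders \emph{uniformly}, i.e. establishing that $\reste_{*}(x_0,r,u,u_1,\dots,u_n)\to 0$ as $r\to 0$ uniformly over all choices of $x_0\in M$ and all unit vectors $u,u_1,\dots,u_n$. Pointwise in these parameters the Taylor expansion is immediate, but uniformity requires that the fourth derivative of the Jacobi field (the error term in Taylor's formula with integral remainder) be bounded uniformly. This follows from the compactness reduction of Section~\ref{sec:locdep}: once $M$ is compact, the curvature tensor and all its covariant derivatives up to the needed order are bounded on $M$, and standard continuity of solutions of the linear ODE~\eqref{eqJacobi} with respect to initial data and coefficients --- together with the fact that $r$ ranges over a bounded interval --- gives the uniform control. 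I would therefore carry out the argument in the following order: (1) reduce to $M$ compact (already done); (2) write the integral form of Taylor's theorem for $J^{(0)}_m$ and $\tilde J^{(i)}_{m+1}$ up to third order; (3) compute the derivatives at $0$ via the Jacobi equation; (4) invoke~\eqref{eq:defseccurv} to identify the cubic coefficient with the sectional curvature; (5) bound the fourth-order remainder uniformly using compactness, extracting the $r^3$ (resp.\ $r^2$ for off-diagonal) factor with a coefficient tending to $0$.
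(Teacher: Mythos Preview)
Your approach is essentially the same as the paper's: both Taylor-expand at $r=0$ using the Jacobi equation to compute the first three derivatives, the only cosmetic difference being that the paper expands the scalar function $r\mapsto\langle v_l^{(0)}(r),J_m^{(0)}(r)\rangle$ directly (exploiting that $v_l^{(0)}(r)$ is parallel, so all derivatives fall on $J$), whereas you expand the vector $J_m^{(0)}(r)$ in the parallel frame and then take inner products. Your final paragraph on uniformity goes beyond what this lemma requires and anticipates the separate Lemma~\ref{lem:resteCoef}; the present statement only needs the pointwise $\reste\to0$, which follows immediately from Taylor's theorem.
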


\begin{proof}[Proof of Lemma \ref{lem:expCBi}]
let us consider the function $$ f_{l,m}(r,x_0,u,u_1,\dots,u_n) = C^{(0)}_{l,m} =  \langle v_l^{(0)}(r) , J^{(0)}_{m}(r) \rangle_z.$$
In anticipation of the proof of Lemma \ref{lem:resteCoef}, we make visible the dependency of the functional $f_{l,m}$ on $x_0$ and $u,u_1,\dots,u_n$ though only the derivatives of $f_{l,m}$ with respect to $r$ will be used in the lines below. Indeed, we wish to apply Taylor's theorem  to the function $r\mapsto f_{l,m}(r,x_0,u,u_1,\dots,u_n)$ for fixed $x_0,u,u_1,\dots,u_n$. We start by calculating the consecutive derivatives of $f_{l,m}$ with respect to $r$ at the point $r=0$.

Since $J^{(0)}_{m}(0)=0$ and $v_l^{(0)}(r)=v_l^{(0)}$, 
\begin{equation} \label{eq:deriv0} f_{l,m}(0,x_0,u,u_1,\dots,u_n) = \langle v_l^{(0)} , J^{(0)}_{m}(0) \rangle_{x_0}=0. \end{equation}
Let us notice that $v_l^{(0)}(r)$ is a parallel transport thus $v_l^{(0)\prime}(r)=0$. This implies that 
\begin{equation}\label{eq:deriv1} \frac{\partial f_{l,m}}{\partial r}(0,x_0,u,u_1,\dots,u_n) = \langle v_l^{(0)} , J^{(0)\prime}_{m}(0) \rangle_{x_0}=  \langle v_l^{(0)}, v^{(0)}_{m} \rangle_{x_0} = \delta_{l,m} .\end{equation}
Since $J^{(0)}_m$ is a Jacobi field, it satisfies the Jacobi equation \eqref{eqJacobi} and consequently,
\begin{equation}\label{eq:deriv2} \frac{\partial^2 f_{l,m}}{\partial r^2}(0,x_0,u,u_1,\dots,u_n) = \langle v_l^{(0)} , J^{(0)\prime\prime}_{m}(0) \rangle_{x_0} = - \langle v_l^{(0)}, \mathcal{R}_{x_0}^{(M)}( v_1^{(0)}, J^{(0)}_{m}(0))v_1^{(0)} \rangle_{x_0} = 0. \end{equation}
The expansion of $C_{l,m}$, $l\neq m$, now follows from Taylor's theorem combined with \eqref{eq:deriv0}-\eqref{eq:deriv2}. The expansion of $C_{m,m}$ requires the third derivative of $f_{m,m}$. Thanks to \cite[p.115]{Do92} and since $J^{(0)}_{m}(0)=0$, 
\begin{align}\label{eq:deriv3}
\frac{\partial^3 f_{l,m}}{\partial r^3}(0,x_0,u,u_1,\dots,u_n) = \langle v_m^{(0)} , J^{(0)\prime\prime\prime}_{m}(0) \rangle_{x_0} &=  - \langle v_m^{(0)}, \mathcal{R}_{x_0}^{(M)}( v_1^{(0)}, J^{(0)\prime}_{m}(0))v_1^{(0)} \rangle_{x_0} \notag\\&= - \langle v_m^{(0)}, \mathcal{R}_{x_0}^{(M)}( v_1^{(0)}, v_m^{(0)})v_1^{(0)} \rangle_{x_0} \notag\\&= K_{x_0}^{(M)}(u,v_m^{(0)}).
\end{align}
Now, applying Taylor's theorem at the fourth order to $f_{m,m}(\cdot,x_0,u,u_1,\dots,u_n)$ combined with \eqref{eq:deriv0}-\eqref{eq:deriv3}, we obtain the expansion \eqref{eq:Cdiag}. We omit the proof of \eqref{eq:Bidiag} and \eqref{eq:Binondiag} as it is very similar to the proof of \eqref{eq:Cdiag} and \eqref{eq:Cnondiag}. 
\end{proof}
We are now able to expand $\det C$ and $\det B^{(i)}$ and prove that 
\begin{align}
\det C & = r^{n-1} - \frac{ \Ric_{x_0}^{(M)}(u)}6r^{n+1} + r^{n+1} \reste_1(x_0,r,u,u_1,\dots,u_n), \label{eq:devDetC}\\
\det B^{(i)} &= r^{n-1} - \frac{ \Ric_{z}^{(M)}(u_i)}6r^{n+1} + r^{n+1} \reste_2(x_0,r,u,u_1,\dots,u_n), \label{eq:devDetB}
\end{align}
where $\Ric_{x_0}^{(M)}$ and $\Ric_{z}^{(M)}$ denote the Ricci curvatures at $x_0$ and $z$ respectively. 
. To this end, let us write 
\begin{align}
 \det C &= \sum_{\sigma \in \mathfrak{S}_{n-1}} \mathrm{sgn}(\sigma) \prod_{j=2}^{n} C_{\sigma(j),j} \notag \\
& = \prod_{j=2}^{n} C_{j,j} + \sum_{\sigma \in \mathfrak{S}_{n-1}\backslash\{ \mathrm{Id}\}} \mathrm{sgn}(\sigma) \prod_{j=2}^{n} C_{\sigma(j),j} \label{eq:detCsum}\\
\end{align} 
where $\mathfrak{S}_{n-1}$ denotes the set of permutations of $\{2,\dots,n\}$, $\mathrm{sgn}(\sigma)$ is the signature of the permutation $\sigma$ and $\mathrm{Id}$ is the identity of $\mathfrak{S}_{n-1}$. We expect the contribution of the first term in \eqref{eq:detCsum} to be dominant in the expansion \eqref{eq:devDetC}. Using expansion \eqref{eq:Cdiag}, we have
\begin{align}
\prod_{j=2}^{n} C_{j,j} &= \prod_{j=2}^{n} (r-\frac{K_z^{(M)}(u,v_j^{(0)})}6r^3 +r^3\reste^{(0)}_{j,j}(x_0,r,u,u_1,\dots,u_n)) \notag \\
&= r^{n-1} - \frac{r^{n+1}}6 \sum_{j=2}^n K_{x_0}^{(M)}(u,v_j^{(0)}) + r^{n+1}\reste(x_0,r,u,u_1,\dots,u_n) \notag\\ 
&= r^{n-1} - \frac{ \Ric_{x_0}^{(M)}(u)}6r^{n+1} + r^{n+1} \reste(x_0,r,u,u_1,\dots,u_n)\label{eq:mainC}
\end{align}
since $\mathcal{V}^{(0)}=\{ u=v_1^{(0)}, \dots,v_n^{(0)} \}$ is an orthonormal basis of $T_{x_0}M$. It remains to prove that the contribution of the second term in \eqref{eq:detCsum} is negligible with respect to $r^{n+1}$. Let $\sigma\neq \mathrm{Id}$. Then there are at least two indices $j$ such that $\sigma(j)\neq j$. Without loss of generality, we can assume $\sigma(2)\neq 2$ and $\sigma(3)\neq 3$ . From \eqref{eq:Cnondiag}, we know that, $C_{\sigma(2),2}= r^2 \reste_{\sigma(2),2}^{(0)}(x_0,r,u,u_1,\dots,u_n)$ and $C_{\sigma(3),3}=r^2 \reste_{\sigma(3),3}^{(0)}(x_0,r,u,u_1,\dots,u_n)$. Moreover, for $j\neq 2,3$, $C_{\sigma(j),j}$ is of order at least $r$, which means that there is a constant $c$ such that $ \prod_{j=4}^{n} C_{\sigma(j),j} \le cr^{n-3}$. 
Thus, 
\begin{align}
 \lim_{r\to 0}|\frac{1}{r^{n+1}}\prod_{j=2}^{n} C_{\sigma(j),j}| &= \lim_{r\to 0 } |\frac{C_{\sigma(2),2}}{r^2} \frac{C_{\sigma(3),3}}{r^2} \frac{\prod_{j=4}^{n} C_{\sigma(j),j}}{r^{n-3} } |\notag\\ 
& \le \lim_{r\to 0} c|\reste_{\sigma(2),2}^{(0)}(x_0,r,u,u_1,\dots,u_n)\reste_{\sigma(3),3}^{(0)}(x_0,r,u,u_1,\dots,u_n)|\notag \\
& = 0 \label{eq:Cneg}.
\end{align}
Combining \eqref{eq:mainC}, \eqref{eq:Cneg} with \eqref{eq:detCsum}, we obtain \eqref{eq:devDetC}. Similarly, if we now denote by $\mathfrak{S}_{n-1}$ the set of the permutation of $\{1,\dots,n-1\}$, we can write 
\begin{align}
 \det B^{(i)}= \prod_{j=1}^{n-1} B^{(i)}_{j,j} + \sum_{\sigma \in \mathfrak{S}_{n-1}\backslash\{ \mathrm{Id}\}} \mathrm{sgn}(\sigma) \prod_{j=1}^{n-1} B^{(i)}_{\sigma(j),j} \label{eq:detBsum}
\end{align}
As for \eqref{eq:mainC}, the expansion \eqref{eq:Bidiag} implies that 
\begin{equation}\label{eq:mainB}
\prod_{j=1}^{n-1} B^{(i)}_{j,j} = r^{n-1} - \frac{ \Ric_{z}^{(M)}(u_i)}6r^{n+1} + r^{n+1} \reste(x_0,r,u,u_1,\dots,u_n).
\end{equation}
Moreover, using same arguments as for \eqref{eq:Cneg}, we can prove that
\begin{equation}\label{eq:Bneg}
\lim_{r\to 0}|\frac{1}{r^{n+1}}\prod_{j=1}^{n-1} B^{(i)}_{\sigma(j),j}| =0.
\end{equation}
Inserting \eqref{eq:mainB} and \eqref{eq:Bneg} into \eqref{eq:detBsum}, we get the expansion \eqref{eq:devDetB}. 
Note that, when $r$ tends to $0$, $z=\exp_{x_0}(ru)$ tends to $x_0$, thus, by continuity of the Ricci curvature, 
\begin{equation}\label{eq:limRicci}
\Ric_{z}^{(M)}(u_i) =  \Ric_{x_0}^{(M)}(u_i) + \reste(x_0,r,u_i),
\end{equation}
where we recall that the vector $u_i$ on the right hand side must be understood as its parallel transport in $T_{x_0}M$.
Then, combining \eqref{eq:devDetB} with \eqref{eq:limRicci}, we are able to write the following expansion of $\det B^{(i)}$
\begin{equation}\label{eq:devDetBfinal}
\det B^{(i)}= r^{n-1} - \frac{\Ric_{x_0}^{(M)}(u_i) }6r^{n+1} + r^{n+1} \reste(x_0,r,u,u_1,\dots,u_n)
\end{equation}
The expansion of the Jacobian determinant $\widetilde{\J}_{x_0}^{(M)}$ now follows from \eqref{eq:jac3}, \eqref{eq:devDetC} and \eqref{eq:devDetBfinal}. 
\end{proof} 
We conclude this section by adding an additional information to the expansion of the Jacobian: the uniformity of the approximation with respect to $x_0\in M$ and to the vectors $u,u_1,\dots,u_n$. This is done in Proposition \ref{prop:jacobunif} below. 

For $x_0\in M$, we define the function $\reste$ through the identity 
\begin{equation}\label{eq:Jacobien}\widetilde{\mathcal J}_{x_0}^{(M)}(r,u,u_1,\dots,u_n) = r^{n^2-1} - \frac{\Ric_{x_0}^{(M)}(u)+\sum_{i=1}^{n}\Ric_{x_0}^{(M)}(u_i) }6r^{n^2+1} + r^{n^2+1} \reste(x_0,r,u,u_1,\dots,u_n).\end{equation}
\begin{prop}
\label{prop:jacobunif}
There exists $c>0$ such that for $r$ small enough, every $x_0\in M$ and vectors $u,u_1,\dots,u_n$, $$|\reste(x_0,r,u,u_1,\dots,u_n)|\le cr.$$  
\end{prop}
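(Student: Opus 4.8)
The plan is to revisit the proof of Theorem~\ref{Jacob} and check that every error term introduced along the way can be controlled uniformly in $x_0$ and in the unit vectors $u,u_1,\dots,u_n$, replacing the pointwise Taylor expansions by their quantitative counterparts. The only places where a ``$o(r^{n^2+1})$'' is produced are: the expansions \eqref{eq:Cdiag}--\eqref{eq:Binondiag} of the entries of $C$ and of the $B^{(i)}$; the passage from these to $\det C$ and $\det B^{(i)}$ in \eqref{eq:devDetC}--\eqref{eq:devDetB}; the continuity estimate \eqref{eq:limRicci} for the Ricci curvature along $\gamma_0$; and finally the multiplication of all these in \eqref{eq:jac3}. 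So the task reduces to a finite list of uniform estimates, each of Taylor-with-remainder type, with the key input being that $M$ is compact (Section~\ref{sec:locdep} allows us to assume this).

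First I would state and prove a uniform version of Lemma~\ref{lem:expCBi}: there is $c>0$ such that for $r$ small, every $x_0\in M$ and all admissible unit vectors,
\[
\bigl|C_{m,m} - r + \tfrac{K_{x_0}^{(M)}(u,v_m^{(0)})}{6}r^3\bigr|\le c\,r^4,\qquad |C_{l,m}|\le c\,r^2\ (l\neq m),
\]
and similarly for $B^{(i)}$. This follows from the integral form of Taylor's theorem applied to $r\mapsto f_{l,m}(r,x_0,u,u_1,\dots,u_n)$: the derivatives up to order three at $r=0$ are computed in \eqref{eq:deriv0}--\eqref{eq:deriv3} and are bounded (the third derivative is a sectional curvature, bounded by $(\mathrm{A}_1)$); the fourth derivative, which controls the remainder, is a smooth expression in the curvature tensor, its covariant derivatives, and the Jacobi field $J_m^{(0)}$ and its derivatives along $\gamma_0$. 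On a compact manifold all of these are uniformly bounded on the unit sphere bundle over $[0,r_0]$, because the relevant ODEs (the Jacobi equation and the equation for $J'$) have coefficients bounded uniformly, so Gronwall gives uniform bounds on $\|J_m^{(0)}(t)\|$, $\|J_m^{(0)\prime}(t)\|$ for $t\le r_0$, whence a uniform bound on $\partial_r^4 f_{l,m}$. This is essentially the same reasoning already used to prove Lemma~\ref{lem:jacobchtsph}(ii) in the Appendix, so I would invoke that machinery.

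Next, these uniform entrywise bounds propagate to the determinants: expanding $\det C=\sum_{\sigma}\mathrm{sgn}(\sigma)\prod_j C_{\sigma(j),j}$ as in \eqref{eq:detCsum}, the diagonal term gives $r^{n-1}-\tfrac{\Ric_{x_0}^{(M)}(u)}{6}r^{n+1}+O(r^{n+2})$ uniformly since each $C_{j,j}$ has a uniform expansion, and each off-diagonal permutation contributes a product containing at least two factors that are $O(r^2)$ and $n-3$ factors that are $O(r)$, hence is $O(r^{n+1})$ with a uniform constant --- actually $O(r^{n+1})$ is what we need to absorb into the $r^{n^2+1}\reste$ term only after multiplying the $n+1$ determinant factors, so I should be slightly careful: the off-diagonal terms are in fact $O(r^{n+1})$ but \emph{not} $o(r^{n+1})$ in general; however they are $O(r^{n+1})$ uniformly, and when all factors $\det C,\det B^{(1)},\dots,\det B^{(n)}$ are multiplied in \eqref{eq:jac3}, any such contribution is multiplied by $n$ other factors each $O(r^{n-1})$, producing an $O(r^{n^2+1})$ term with uniform constant, which is exactly the claimed $r^{n^2+1}\cdot O(r)$-type control once one notices the diagonal-of-diagonals contributes the main $r^{n^2-1}-(\dots)r^{n^2+1}$ and all cross terms are $O(r^{n^2+2})$ --- wait, the off-diagonal permutation terms are $O(r^{n+1})$, times $(r^{n-1})^{n-1}=r^{n^2-n}$ gives $r^{n^2+1}$, not $r^{n^2+2}$; so these are \emph{not} negligible at order $r^{n^2+1}$. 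I must therefore track them: the claim of Proposition~\ref{prop:jacobunif} is only $|\reste|\le cr$, i.e.\ $\widetilde{\mathcal J}=r^{n^2-1}-(\dots)r^{n^2+1}+O(r^{n^2+2})$, so in fact I need the off-diagonal permutation terms to be $o(r^{n+1})$, equivalently $O(r^{n+2})$. This holds because each off-diagonal factor is not merely $O(r^2)$ but $O(r^2)\cdot\reste_{*}\to 0$, i.e.\ $o(r^2)$; more precisely, from \eqref{eq:deriv0}--\eqref{eq:deriv2} the functions $f_{l,m}$ with $l\neq m$ have vanishing derivatives up to order $2$ at $r=0$, so $C_{l,m}=\tfrac{1}{6}\partial_r^3 f_{l,m}(0)\,r^3+O(r^4)$ with, crucially, $\partial_r^3 f_{l,m}(0)=-\langle v_l^{(0)},\mathcal R_{x_0}(v_1^{(0)},v_m^{(0)})v_1^{(0)}\rangle$, a bounded quantity; hence $C_{l,m}=O(r^3)$ uniformly (not just $O(r^2)$). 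Then a permutation with $k\ge 2$ displacements contributes $O(r^{3k+(n-1-k)})=O(r^{n-1+2k})=O(r^{n+3})$, which after multiplication by the other $n$ determinant factors is $O(r^{n^2+3})$, comfortably inside $r^{n^2+1}\cdot O(r)$. So the careful statement is: redo the entrywise expansion keeping track that off-diagonal entries are $O(r^3)$, not $O(r^2)$.

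The remaining ingredient is the uniform version of \eqref{eq:limRicci}: $|\Ric_z^{(M)}(u_i)-\Ric_{x_0}^{(M)}(u_i)|\le c\,r$ for $z=\exp_{x_0}(ru)$, where $u_i$ on the right is its parallel transport back to $x_0$ along $\gamma_0$. This is immediate on a compact manifold: $\Ric^{(M)}$ is a smooth $(0,2)$-tensor, so $t\mapsto \Ric_{\gamma_0(t)}^{(M)}(P_{x_0\to\gamma_0(t)}u_i,P_{x_0\to\gamma_0(t)}u_i)$ is smooth with derivative bounded uniformly over the unit sphere bundle, giving the Lipschitz bound with a constant depending only on $\|\nabla\Ric^{(M)}\|_\infty<\infty$. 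Assembling: from \eqref{eq:jac3}, $|\widetilde{\mathcal J}_{x_0}^{(M)}| = n!\,\Delta(u,u_1,\dots,u_n)\,|\det C|\prod_{i}|\det B^{(i)}|$; substituting the uniform expansions of $\det C$ and $\det B^{(i)}$ (the latter with the $\Ric_z\to\Ric_{x_0}$ replacement costing only $O(r^{n+2})$), expanding the product and collecting the $r^{n^2-1}$ and $r^{n^2+1}$ coefficients gives exactly \eqref{eq:Jacobien} with all remainder terms $O(r^{n^2+2})$ and a constant depending only on $n$, $r_0$, and the uniform $C^3$-bounds on the curvature tensor of $M$ --- all finite by compactness of $M$ and $(\mathrm{A}_1)$. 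Since $\Delta(u,u_1,\dots,u_n)\le 1/n!$ is bounded and $n!\,\Delta$ drops out of the normalization in the definition of $\reste$ once one writes $\widetilde{\mathcal J}=n!\Delta(r^{n^2-1}-\cdots)$, the bound $|\reste(x_0,r,u,u_1,\dots,u_n)|\le cr$ follows.

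The main obstacle, as the discussion above shows, is purely bookkeeping: one must be careful that the off-diagonal entries of $C$ and $B^{(i)}$ are actually $O(r^3)$ (not merely $O(r^2)$), which is what makes the non-identity permutations genuinely negligible at the required order $r^{n^2+1}$; and one must make sure every constant produced --- in the fourth-order Taylor remainders for the matrix entries, in the Gronwall bounds on the Jacobi fields, and in the Lipschitz estimate for $\Ric$ --- depends only on $n$, $r_0$ and the global curvature bounds, not on $x_0$ or the chosen vectors. Compactness of $M$ (legitimate after Section~\ref{sec:locdep}) together with assumption $(\mathrm{A}_1)$ is precisely what delivers this, and the template for turning a pointwise Jacobi-field Taylor expansion into a uniform one is already available in the proof of Lemma~\ref{lem:jacobchtsph}(ii).
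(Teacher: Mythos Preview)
Your proposal is correct and follows the same approach as the paper: the key step is a uniform-in-$(x_0,u,u_1,\dots,u_n)$ upgrade of the entrywise expansions of Lemma~\ref{lem:expCBi}, which the paper isolates as Lemma~\ref{lem:resteCoef} (proved via Taylor's inequality, with the third and fourth $r$-derivatives of the entries bounded by continuity on the compact set $[0,1]\times M\times(\mathcal S^{n-1})^{n+1}$), after which the determinant expansion and Assumption~$(\mathrm A_1)$ yield the claim. Your explicit observation that the off-diagonal entries are $O(r^3)$ uniformly, and your Lipschitz bound $|\Ric_z^{(M)}(u_i)-\Ric_{x_0}^{(M)}(u_i)|\le cr$, make precise two points the paper's short proof leaves implicit.
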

\begin{proof}
We start by recalling that because of \eqref{eq:jac3}, the determinant $\widetilde{\mathcal J}_{x_0}^{(M)}(r,u,u_1,\dots,u_n)$ is a homogeneous polynomial of degree $n^2$ in the coefficients of the matrices $C$ and $B^{(i)}$, $i=1,\dots,n$, and containing exclusively monomials of type $\prod_{l=1}^nC_{l,\sigma(l)}\prod_{i=1}^n\prod_{m=1}^nB^{(i)}_{m,\sigma_i(m)}$ where $\sigma,\sigma_1,\dots,\sigma_n$ are permutations of $\{1,\dots,n\}$. This implies that the remainder $r^{n^2+1}\reste(x_0,r,u,u_1,\dots,u_n)$ in \eqref{eq:Jacobien}  is a linear combination of products containing exclusively factors of type $r$, $K_{x_0}^{(M)}(\cdot,\cdot)r^3$ and at least one factor of type $r^3\reste_{m,m}^{(i)}$ or $r^2\reste_{l,m}^{(i)}$, $l\ne m$. A use of ($\mbox{A}_1$) and Lemma \ref{lem:resteCoef} below completes the proof of Proposition \ref{prop:jacobunif}. 
\end{proof}
The next lemma shows that the functions $\reste_{l,m}^{(i)}$ appearing in the Taylor expansions of Lemma \ref{lem:expCBi} are bounded by a linear function of $r$. 
\begin{lem}\label{lem:resteCoef}
There exist positive constants $c_{l,m}^{(k)}$, $k=0,\dots,n$, $l,m=1,\dots,n$ such that for every $r\in (0,1)$, $x_0$ and vectors $u,u_1,\dots,u_n$, $k=0,\dots,n$,
\begin{align}\label{unifReste}
|\reste_{l,m}^{(k)}(x_0,r,u,u_1,\dots,u_n)|  \le c_{l,m}^{(k)}r.
\end{align}
\end{lem}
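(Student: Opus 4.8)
The plan is to revisit the Taylor expansions established in Lemma \ref{lem:expCBi} and extract, for each remainder function $\reste_{l,m}^{(k)}$, an explicit integral (Lagrange- or integral-form) remainder whose size can be controlled uniformly. Recall that $\reste_{l,m}^{(0)}$ came from applying Taylor's theorem to $f_{l,m}(r) = \langle v_l^{(0)}(r), J^{(0)}_m(r)\rangle_z$ at order $2$ (off-diagonal) or order $3$ (diagonal), and analogously $\reste_{l,m}^{(i)}$ came from the functions $r\mapsto \langle v_{l+1}^{(i)}(r), \tilde J^{(i)}_{m+1}(r)\rangle_{x_i}$. By the integral form of the remainder, $r^2\reste_{l,m}^{(0)}(x_0,r,u,\dots)$ equals $\tfrac12\int_0^r (r-t)^2 f_{l,m}'''(t)\,\dd t$ in the off-diagonal case (and a fourth-derivative integral in the diagonal case). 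So $|\reste_{l,m}^{(0)}|\le c\,r\sup_{t\in[0,r]}|f_{l,m}'''(t)|$ and it suffices to bound the third (resp. fourth) derivative of $f_{l,m}$ on $[0,r]$ uniformly in $x_0$ and in the unit vectors $u,u_1,\dots,u_n$.

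The key step is therefore a uniform bound on these derivatives. Each derivative $\tfrac{\dd^p}{\dd r^p}\langle v_l^{(0)}(r), J^{(0)}_m(r)\rangle$ is, upon repeatedly using that $v_l^{(0)}(r)$ is parallel ($v_l^{(0)\prime}\equiv 0$) and that $J^{(0)}_m$ is a Jacobi field, expressible as a scalar product of $v_l^{(0)}(r)$ against a covariant-polynomial expression in $J^{(0)}_m(t)$, $J^{(0)\prime}_m(t)$, the curvature tensor $\mathcal{R}^{(M)}$, and its covariant derivatives — all evaluated along the geodesic $\gamma_0$. Since $\|v_l^{(0)}(r)\|=1$ (parallel transport is an isometry), Cauchy--Schwarz reduces the task to bounding $\|J^{(0)}_m(t)\|$, $\|J^{(0)\prime}_m(t)\|$, $\|\mathcal{R}^{(M)}\|$ and finitely many of its covariant derivatives along $\gamma_0$, for $t\le r\le 1$. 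The Rauch comparison theorem (cf.\ \eqref{field}) together with assumption $(\mathrm{A}_1)$ gives $\|J^{(0)}_m(t)\|$ and $\|J^{(0)\prime}_m(t)\|$ bounded on $[0,1]$ by constants depending only on the curvature bounds, uniformly over $x_0$ and the initial data $J^{(0)}_m(0)=0$, $J^{(0)\prime}_m(0)=v_m^{(0)}$ a unit vector. For the curvature tensor and its derivatives we invoke the reduction of Section \ref{sec:locdep}: we may assume $M$ is compact, so $\mathcal{R}^{(M)}$ and its covariant derivatives up to the needed order are continuous on a compact set, hence bounded. The same reasoning applied to the functions defining $B^{(i)}$ along $\gamma_i$ handles $\reste_{l,m}^{(i)}$ for $1\le i\le n$; here the base point $z=\exp_{x_0}(ru)$ ranges over a compact set and the initial vectors $u_i, v^{(i)}_{m+1}$ are again unit, so the same uniform curvature bounds apply.

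The main obstacle is purely bookkeeping: one must verify that all the geometric quantities entering the higher derivatives — in particular the covariant derivatives of $\mathcal{R}^{(M)}$ that appear when differentiating the Jacobi equation repeatedly — admit bounds that are genuinely uniform in the base point and in all the unit direction vectors, and that the number of derivatives required ($3$ for the off-diagonal, $4$ for the diagonal coefficients) is finite and fixed. Once the compactness reduction of Section \ref{sec:locdep} is in force this is immediate from continuity on a compact manifold, so no delicate estimate is needed; the only care required is to keep track of which initial conditions are unit vectors (so that the Rauch bounds are uniform) and to note that the constants $c_{l,m}^{(k)}$ may be taken as the supremum of the relevant continuous functions over the compact manifold, which is finite. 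This yields \eqref{unifReste} and completes the proof.
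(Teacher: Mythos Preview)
Your proposal is correct and follows essentially the same route as the paper: bound the third (off-diagonal) or fourth (diagonal) $r$-derivative of $f_{l,m}$ uniformly and then apply Taylor's inequality (or, equivalently, the integral remainder) to extract the factor $r$. The only cosmetic difference is that the paper packages the uniform bound in one stroke---observing that $\frac{\partial^3 f_{l,m}}{\partial r^3}$ and $\frac{\partial^4 f_{l,m}}{\partial r^4}$ are continuous functions on the compact set $[0,1]\times M\times(\sph^{n-1})^{n+1}$ and hence bounded---whereas you decompose this into a Rauch-comparison bound on $\|J_m^{(0)}\|$, $\|J_m^{(0)\prime}\|$ plus a compactness bound on the curvature tensor and its covariant derivatives; your decomposition is a bit more explicit but the Rauch step is not strictly necessary once the compactness reduction of Section~\ref{sec:locdep} is in force.
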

\begin{proof}
We go back to the proof of Lemma  \ref{lem:expCBi} and. 
instead of using Taylor's theorem, we wish to apply Taylor's inequality  to the function $r\mapsto f_{l,m}(r,x_0,u,u_1,\dots,u_n)$ for fixed $x_0,u,u_1,\dots,u_n$ combined with uniform upper bounds for the third (resp. fourth)
derivative of $f_{l,m}$ with respect to $r$ when $l\neq m$ (resp. $l=m$). We start by calculating the derivatives of $f_{l,m}$ with respect to $r$: since $ v_l^{(0)}(r)$ is a parallel transport, its successive derivatives are zero. Moreover, applying the Jacobi equation \eqref{eqJacobi} to $J^{(0)}_{m}$, we get 
\begin{equation}
\frac{\partial^2 f_{l,m}}{\partial r^2}(r,x_0,u,u_1,\dots,u_n) =  \langle v_l^{(0)}(r) , J^{(0)\prime\prime}_{m}(r) \rangle_z = -  \langle v_l^{(0)}(r) , \mathcal{R}^{(M)}_z(v_1^{(0)}(r),J^{(0)}_{m}(r) ) v_1^{(0)}(r) \rangle_z.
\end{equation}
It follows that the third and fourth derivatives of $f_{l,m}$ with respect to $r$ can be written as 
\begin{align}
\frac{\partial^3 f_{l,m}}{\partial r^3}(r,x_0,u,u_1,\dots,u_n)& =-  \langle v_l^{(0)}(r) , \frac{\partial }{\partial r} \mathcal{R}^{(M)}_z(v_1^{(0)}(r),J^{(0)}_{m}(r) ) v_1^{(0)}(r) \rangle_z \label{eq:df3} 
\end{align}
and
\begin{align}
\frac{\partial^4 f_{l,m}}{\partial r^4}(r,x_0,u,u_1,\dots,u_n)&= -  \langle v_l^{(0)}(r) , \frac{\partial^2 }{\partial r^2}\mathcal{R}^{(M)}_z(v_1^{(0)}(r),J^{(0)}_{m}(r) ) v_1^{(0)}(r) \rangle_z. \label{eq:df4}
\end{align}
By the regularity of the curvature tensor, the functions $\frac{\partial^3 f_{l,m}}{\partial r^3}$ and $\frac{\partial^4 f_{l,m}}{\partial r^4}$ are continuous on the compact set $[0,1]\times M\times ({\mathcal S}^{n-1})^{n+1}$ where we  use with a slight abuse the same notation ${\mathcal S}^{n-1}$ for the unit sphere of both $T_{x_0}M$ and $T_zM$. Consequently, there exists a positive constant $c$ such that for every $r\le 1$, $x_0\in M$ and every $u,u_1,\dots,u_n$,
\begin{align}
\bigg|\max\left(\frac{\partial^3 f_{l,m}}{\partial r^3},\frac{\partial^4 f_{l,m}}{\partial r^4}\right)(r,x_0,u,u_1,\dots,u_n)\bigg|&\le c. \label{eq:bornef3}
\end{align}
Taylor's inequality combined with \eqref{eq:bornef3} 
implies that for every $r\le 1$, $x_0\in M$ and $u,u_1,\dots,u_n$,
\begin{align*}
r^3|\reste_{m,m}^{(0)}(x_0,r,u,u_1,\dots,u_n)|&\le \frac{r^4}{24}\sup_{[0,1]\times M\times ({\mathcal S}^{n-1})^{n+1}}\bigg|\frac{\partial^4 f_{l,m}}{\partial r^4}\bigg|\le \frac{c}{24} r^4
\end{align*}
and for $l\ne m$,
\begin{align*}
r^2|\reste_{l,m}^{(0)}(x_0,r,u,u_1,\dots,u_n)| &\le \frac{r^3}{6}\sup_{[0,1]\times M\times ({\mathcal S}^{n-1})^{n+1}}\bigg|\frac{\partial^3 f_{l,m}}{\partial r^3}\bigg|\le \frac{c}{6} r^3. 
\end{align*}
This proves \eqref{unifReste} for $k=0$. For $k=1,\dots,n$,  \eqref{unifReste} follows by the same method. 
\end{proof}
We end this section with the proof of the explicit formula of $\widetilde{\mathcal J}_{x_0}^{(M)}(r,u,u_1,\dots,u_n)$ in the case $M=\mathcal{H}_k^n$.
\begin{proof}[Proof of \eqref{eq:formuleBPhyp}] Let us go back to \eqref{eq:jac3}. From \cite[p 113]{Do92}, we know that for $M=\mathcal{H}_k^n$, the Jacobi fields satisfy 
\begin{align}
J_j^{(0)}(r) &= \frac{\sinh(kr)}{k} v_j^{(0)}(r),\\
\tilde{J}_j^{(i)}(r) &= \frac{\sinh(kr)}{k} v_j^{(i)}(r).
\end{align}
Hence, we can rewrite the matrices $C$ and $B^{(i)}$ as 
\begin{equation}
C=\left(  \begin{array}{c|c}
           1&0 \\
					\hline\\
					0& \frac{\sinh(kr)}k \mathrm{Id}_{(n-1)}\\
					&~
					\end{array}
   \right), ~~ B^{(i)}= \frac{\sinh(kr)}k \mathrm{Id}_{(n-1)},
\end{equation} 
where $\mathrm{Id}_{(n-1)}$ denotes the identity matrix of size $(n-1)$. 
Inserting into \eqref{eq:jac3}, we obtain that 
\begin{equation}
\widetilde{\mathcal J}_{x_0}^{({\mathcal H}_k^n)}(r,u,u_1,\dots,u_n) = n! \Delta(u,u_1,\dots,u_n) \left(\frac{\sinh(kr)}{k}\right)^{n^2-1}, 
\end{equation}
which completes the proof of \eqref{eq:formuleBPhyp}. 
\end{proof}

\section{Mean cardinality and density of the set of vertices of $\Cl$: proofs of Theorem \ref{THM} and Theorem \ref{DENS}}\label{sec:vert}
In this section, we prove Theorems \ref{THM} and \ref{DENS} which contain an asymptotic expansion of $\E[\Nl]$ and of the density of the intensity measure of the point process ${\mathcal V}_{x_0,\lambda}^{(M)}$ as well as explicit formulas in the particular cases of ${\mathcal S}_k^{n}$ and ${\mathcal H}_k^n$. 

\subsection{ Proof of Theorem \ref{THM} (i) } 
Each vertex of ${\mathcal C}_{x_0,\lambda}^{(M)}$ is a circumcenter of $x_0$ and $n$ distinct points $x_1,\dots,x_n$ of $\Pl$ such that the associated circumscribed ball contains no  point of the point process $\Pl$ in its interior. Thanks to Section \ref{sec:locdep} and Assumption \mbox{($\mbox{A}_4$)}, we can assume that $M$ is included in ${\mathcal B}^{(M)}(x_0,r_{\mbox{\tiny{max}}})$ where $r_{\mbox{\tiny{max}}}$ is given by \mbox{($\mbox{A}_4$)}. This implies that for any $x_1,\dots,x_n\in M$, there is at most one unique circumscribed ball $\bo^{(M)}(x_0,x_1,\dots,x_n)$ with center in ${\mathcal B}^{(M)}(x_0,r_{\mbox{\tiny{max}}})$ and radius less than $r_{\mbox{\tiny{max}}}$.
Consequently, using the Mecke-Slivnyak formula, see e.g. \cite[Proposition 4.1.1]{Mol94}, we get 
\begin{align}
 \E[\Nl] &= \E\left[ \sum_{\{x_1,\dots,x_n\} \subset \Pl} \mathds{1}_{\{ \bo^{(M)}(x_0,x_1,\dots,x_n) \mbox{\tiny{ exists and }} \bo^{(M)}(x_0,x_1,\dots,x_n) \cap \Pl= \emptyset \}} \right], \notag \\
&=  \frac{\lambda^n}{n!} \int_{M^n} \proba(\bo^{(M)}(x_0,x_1,\dots,x_n) \mbox{ exists and does not meet $\Pl$}) 
\dM(x_1) \dots \dM(x_n), \notag \\
&=  \frac{\lambda^n}{n!} \int_{\tilde{M}^n} e^{-\lambda \vol^{(M)}(\bo^{(M)}(x_0,x_1,\dots,x_n))} \dM(x_1) \dots \dM(x_n).\label{Sliv}
\end{align}
where 
 \[ \tilde{M}^n = \{ (x_1,\dots,x_n) \in M^n, \text{ the circumscribed ball }\bo^{(M)}(x_0,x_1,\dots,x_n) \text{ exists} \} .\]
The integral above is of the type which is classically treated via Laplace's method \cite[Chap 5 and 9]{Ble86}. Indeed, we expect that only the small circumscribed balls which naturally correspond to points $x_1,\dots,x_n$ in a small neighbourhood of $x_0$, will contribute significantly and the rest will decay exponentially fast. The expansion of the integral requires an expansion of the volume of the ball $\bo^{(M)}(x_0,x_1,\dots,x_n)$ given in Lemma \ref{lem:volunif} and of the volume element $\dM(x_1) \dots \dM(x_n)$ given by \eqref{eq:egalitemesuressec5} combined with Theorem \ref{Jacob}. 

\noindent \textit{Step 1: decomposition of $\E[\Nl]$ into two integrals.} As in Section \ref{sec:vol}, let $r_\lambda = \lambda^{-\frac{n+1}{n(n+2)}}$ and let $M^n_\lambda$ be the subset of $M^n$ defined by 

\[ M^{(n)}_\lambda = \{ (x_1,\dots,x_n)\in M^n \mbox{ s.t. $\bo^{(M)}(x_0,x_1,\dots,x_n)$ exists and has radius smaller than }r_\lambda \} .\]
Then the integral in \eqref{Sliv} can be written as 
\begin{equation}
\E[\Nl] = I_\lambda + \tilde{I}_\lambda, \label{eq:decompEN}
\end{equation}
where 
\begin{align}
I_\lambda&= \frac{\lambda^n}{n!} \int_{M_\lambda^{(n)}} e^{-\lambda \vol^{(M)}(\bo^{(M)}(x_0,x_1,\dots,x_n))} \dM(x_1) \dots \dM(x_n) \label{eq:defIvert}\\
\intertext{and}
\tilde{I}_\lambda&=  \frac{\lambda^n}{n!} \int_{\tilde{M}^n\backslash M_\lambda^{(n)}} e^{-\lambda \vol^{(M)}(\bo^{(M)}(x_0,x_1,\dots,x_n))} \dM(x_1) \dots \dM(x_n). \label{eq:defItildevert}
\end{align} 

\noindent \textit{Step 2: $\tilde{I}_\lambda$ is negligible.} We show that $\tilde{I}_\lambda$ given by \eqref{eq:defItildevert} is negligible in front of $\lambda^{-\frac 2n}$. By definition of the set $M^{(n)}_\lambda$, for every $(x_1,\dots,x_n) \in \tilde{M}^n\setminus M_\lambda^{(n)}$, the circumscribed ball $\bo^{(M)}(x_0,\dots,x_n)$ has a radius greater than $\lambda^{-\frac{n+1}{n(n+2)}}$. Then, since $M$ is compact, Lemma \ref{lem:volunif} (ii) implies that there exists a positive constant $c$ such that for $\lambda$ large enough and for every $(x_1,\dots,x_n) \in M^n\setminus M_\lambda^{(n)}$, 
\begin{equation}\label{eq:minorVolIneg}
\vol^{(M)}( \bo^{(M)}(x_0,\dots,x_n)) \ge c\lambda^{-\frac{n+1}{n+2}}.
\end{equation}
Inserting \eqref{eq:minorVolIneg} into \eqref{eq:defItildevert}, we obtain that
\[ \tilde{I}_\lambda \le \vol^{(M)}(M)^n e^{-c\lambda^{1-\frac{n+1}{n+2}}} ,\]
which implies 
\begin{equation}\label{eq:ItildeNeg}
\tilde{I}_\lambda = o\left( \frac{1}{\lambda^{\frac 2n}}\right). 
\end{equation}

\noindent \textit{Step 3: estimate of $I_\lambda$.} Let us now prove that when $\lambda\to \infty$, 

\begin{equation}\label{eq:devFinI}
I_\lambda = e_n - \frac{d_n \Scal_{x_0}^{(M)}}{\lambda^{\frac 2n}} + o\left( \frac{1}{\lambda^{\frac 2n}}\right), 
\end{equation}
where the constants $e_n$ and $d_n$ are given in Theorem \ref{THM} (i). To this end, we apply the change of variables defined by the transformation $\Phi_{x_0}^{(M)}$ given by \eqref{change}. We can do so because $\Phi_{x_0}^{(M)}$ is a global diffeomorphism as an injective function with non-vanishing Jacobian determinant. We get
\begin{multline}\label{eq:INouvCoor}
I_\lambda = \frac{\lambda^n}{n!} \int_{0}^{\lambda^{-\frac{n+1}{n(n+2)}}} \int e^{-\lambda\vol^{(M)}(\bo^{(M)}(\exp_{x_0}(ru),r))} | \widetilde{\J}_{x_0}^{(M)}(r,u,u_1,\dots,u_n)| \\  \dvol^{(\sph^{n-1})}(u)\dvol^{(\sph^{n-1})}(u_1)\dots\dvol^{(\sph^{n-1})}(u_n)\dd r .
\end{multline}
The estimation of $I_\lambda$ relies on the expansion of both $\vol^{(M)}(\bo^{(M)}(\exp_{x_0}(ru),r))$ and $\widetilde{\J}_{x_0}^{(M)}(r,u,u_1,\dots,u_n)$. Let $\varepsilon>0$. Theorem \ref{Jacob} implies that for $\lambda$ large enough, $0<r<\lambda^{-\frac{n+1}{n(n+2)}}$, $u\in T_{x_0}M$, $\|u\|_{x_0} =1$ and $u_i\in T_{\exp_{x_0}(ru)}M$ with $\|u_i\|_{\exp_{x_0}(ru)}=1$, $i=1,\dots,n$, 
 \begin{align}
\bigg|\frac{\widetilde{\J}_{x_0}^{(M)}(r,u,u_1,\dots,u_n)}{n! \Delta(u,u_1,\dots,u_n)}-r^{n^2-1} + \frac{L_{x_0}^{(M)}(u,u_1,\dots,u_n)}6 r^{n^2+1}\bigg|\le \varepsilon r^{n^2+1}.
\label{eq:inegJacobExp}
\end{align}
Inserting \eqref{eq:inegJacobExp} and \eqref{eq:inegVolumeC} into \eqref{eq:INouvCoor}, we obtain
\begin{equation}\label{eq:inegI-+} I_{\lambda,-} \le I_\lambda \le I_{\lambda,+} \end{equation} 
where
\begin{multline}
I_{\lambda,\pm} = \int_{0}^{\lambda^{-\frac{n+1}{n(n+2)}}} \int \varphi_{\lambda,\pm}(r,u,u_1,\dots,u_n)
\dvol^{(\sph^{n-1})}(u)\dvol^{(\sph^{n-1})}(u_1)\dots \dvol^{(\sph^{n-1})}(u_n) r^{n-1} \dd r, 
\end{multline}
and the functions $\varphi_{\lambda,+}$ and $\varphi_{\lambda,-}$ are defined by the equalities
\begin{align*}
\varphi_{\lambda,-}(r,u,u_1,\dots,u_n)&=e^{-\lambda\kappa_n r^n} \left[ r^{n(n-1)} +\left( \frac{\kappa_n \Scal_{x_0}^{(M)}}{6(n+2)} -\varepsilon \right) \lambda{r^{n^2+2}}- \left(\frac{L_{x_0}^{(M)}(u,u_1,\dots,u_n)}6 +\varepsilon  \right)r^{n^2-n+2}\notag\right.\\&\hspace*{2cm}\left. -\left( \frac{\kappa_n \Scal_{x_0}^{(M)}}{6(n+2)} -\varepsilon \right)\left(\frac{L_{x_0}^{(M)}(u,u_1,\dots,u_n)}6 +\varepsilon  \right)\lambda r^{n^2+4} \right] 
\Delta(u,u_1,\dots,u_n)
\end{align*}
and
\begin{align*}
\varphi_{\lambda,+}(r,u,u_1,\dots,u_n)&=e^{-\lambda\kappa_n r^n} \left[ r^{n(n-1)} +\left( \frac{\kappa_n \Scal_{x_0}^{(M)}}{6(n+2)} +2\varepsilon \right) \lambda{r^{n^2+2}}- \left(\frac{L_{x_0}^{(M)}(u,u_1,\dots,u_n)}6 -\varepsilon  \right)r^{n^2-n+2} \right. \\&\hspace*{2cm}
 \left. -\left( \frac{\kappa_n \Scal_{x_0}^{(M)}}{6(n+2)} +2\varepsilon \right)\left(\frac{L_{x_0}^{(M)}(u,u_1,\dots,u_n)}6 -\varepsilon  \right)\lambda r^{n^2+4} \right] \Delta(u,u_1,\dots,u_n). \\  
\end{align*}
In order to derive an upper bound for $I_{\lambda,+}$, we proceed in the exact same way  as in the proof of Theorem \ref{thm:propvol} (i), Step 3, i.e.
\begin{itemize}
\item[-]  we start by applying the change of variables $y=\lambda\kappa_n r^n$, taking $y$ from $0$ to $\infty$,
\item[-]  we use Assumption \mbox{($\mbox{A}_1$)} to show that the integral of the last term of $\varphi_{\lambda,+}$ is bounded by $\lambda^{-\frac4n}$ up to a multiplicative constant,
\item[-]  we calculate the integral of the remaining terms over $u,u_1,\dots,u_n$ first and then over $y$.
\end{itemize}
We do not add details for the first two points since they come from a straightforward adaptation of Step 3 from the proof of Theorem \ref{thm:propvol} (i). The calculation of the integral over $u,u_1,\dots,u_n$ goes along the following lines. We start by recalling the formula below, see \cite[Theorem 2]{Mil71bis}:
 \begin{equation*}
 \int_{u,u_1,\dots,u_n} \Delta(u,u_1,\dots,u_n) \dvol^{(\sph^{n-1})}(u)\dvol^{(\sph^{n-1})}(u_1)\dots \dvol^{(\sph^{n-1})}(u_n) = \frac{2^{n+1}}{n!} \frac{ \Gamma\left( \frac{n^2+1}2 \right)}{ \Gamma\left( \frac{n^2}2 \right)} \frac{\pi^{\frac{n^2+n-1}2}}{ \Gamma\left( \frac{n+1}2 \right)^n}.
 \end{equation*}
 Denoting by $k_n$ the constant above and using Fubini's theorem combined with \eqref{eq:scalRicci}, we obtain in particular
  \begin{align*}
  \int_{u,u_1,\dots,u_n}&\Delta(u,u_1,\dots,u_n)\Ric_{x_0}^{(M)}(u)\dvol^{(\sph^{n-1})}(u)\dvol^{(\sph^{n-1})}(u_1)\dots \dvol^{(\sph^{n-1})}(u_n)\notag\\ & = \int_{u} \Ric_{x_0}^{(M)}(u) \int_{u_1,\dots,u_n}\Delta(u,u_1,\dots,u_n))\dvol^{(\sph^{n-1})}(u_1)\dots \dvol^{(\sph^{n-1})}(u_n)\dvol^{(\sph^{n-1})}(u) \notag \\
  & =  \int \Ric_{x_0}^{(M)}(u) \frac{k_n}{\vol^{(\sph^{n-1})}(\sph^{n-1})} \dvol^{(\sph^{n-1})}(u) \notag \\
  & =  \frac{k_n \Scal_{x_0}^{(M)}}{n}. 
  \end{align*}
 Consequently, we get
 \begin{align*}
 &\int_{u,u_1,\dots,u_n}\Delta(u,u_1,\dots,u_n)L_{x_0}^{(M)}(u,u_1,\dots,u_n)\dvol^{(\sph^{n-1})}(u)\dvol^{(\sph^{n-1})}(u_1)\dots \dvol^{(\sph^{n-1})}(u_n)\notag \\ & = \sum_{i=0}^{n} \int_{u_0,u_1,\dots,u_n}\Delta(u_0,u_1,\dots,u_n)\Ric_{x_0}^{(M)}(u_i)\dvol^{(\sph^{n-1})}(u_0)\dvol^{(\sph^{n-1})}(u_1)\dots \dvol^{(\sph^{n-1})}(u_n) \notag \\
 &= \frac{k_n (n+1) \Scal_{x_0}^{(M)}}{n}. 
 \end{align*}
The integration of $\varphi_{\lambda,+}$ over $u,u_1,\dots,u_n$ then leads us to 
\begin{align}\label{eq:majorIfinal}
I_{\lambda,+} &\le k_n \int_{0}^{\infty} e^{-y} \left[ \y^{n-1} + \left( \frac{\kappa_n \Scal_{x_0}^{(M)}}{6(n+2)} +2\varepsilon \right)\y^{n+\frac 2n}\frac{1}{\lambda^{\frac 2n}}\right.\notag\\
&\hspace*{5cm} \left.-\left( \frac{ (n+1)\Scal_{x_0}^{(M)}}{n} - \varepsilon \right) y^{n-1+\frac 2n}\frac{1}{\lambda^{\frac 2n}} \right] \frac{\dd y}{\kappa_n}+\frac{c}{\lambda^{\frac4n}}. 
\end{align}
Finally, integrating over $y$ in \eqref{eq:majorIfinal}, for $\lambda$ large enough, we obtain 
\begin{align}
I_{\lambda,+} \le  e_n - \frac{\Scal_{x_0}^{(M)}}{\lambda^{\frac 2n}} d_n + \frac{c\varepsilon}{\lambda^{\frac 2n}}, \label{eq:I+final}
\end{align}
where $e_n$ and $d_n$ are given by \eqref{eq:Nmoyeuc} and \eqref{eq:defdn} respectively.
The lower bound $I_{\lambda,-}$ can be handled in the same way, so that 
\begin{equation}
I_{\lambda,-} \ge  e_n - \frac{\Scal_{x_0}^{(M)}}{\lambda^{\frac 2n}} d_n - \frac{c\varepsilon}{\lambda^{\frac 2n}} .\label{eq:minorI-}
\end{equation}
Using \eqref{eq:inegI-+}, \eqref{eq:I+final} and \eqref{eq:minorI-}, we get \eqref{eq:devFinI} which, combined with \eqref{eq:ItildeNeg}, concludes the proof of Theorem \ref{THM} (i). 
\subsection{ Proof of Theorem  \ref{DENS}, (i)}
Let $h$ be a measurable, bounded and non-negative test function on $\R_+\times {\mathcal S}^{n-1}$. We aim at finding a function $g_\lambda^{(M)}$ which satisfies 
$$\E\left(\sum_{(r,u)\in {\mathcal V}_{x_0,\lambda}^{(M)}}h(r,u)\right)=\int h(r,u)g_\lambda^{(M)}(r,u)\mathrm{d}r\mathrm{d}\mbox{vol}^{({\mathcal S}^{n-1})}(u).$$
To do so, we use again the fact that each vertex of $\Cl$ is the center of a ball which is circumscribed to $x_0$ and to $n$ distinct points $x_1,\dots,x_n$ of ${\mathcal P}_\lambda$. For any $x_1,\dots,x_n$, let us define $R:=R(x_0,\dots,x_n)>0$ and $U:=U(x_0,\dots,x_n)\in {\mathcal S}^{n-1}$ such that $\exp_{x_0}(\lambda^{-\frac1n}R(x_0,\dots,x_n)U(x_0,\dots,x_n))$ is the associated circumscribed center when it exists. Using the Mecke-Slivnyak formula, we get
\begin{align*}
& \E\left(\sum_{(r,u)\in {\mathcal V}_{x_0,\lambda}^{(M)}}h(r,u)\right)\\&=\E\left(\sum_{\{x_1,\dots,x_n\} \subset \Pl} 
h(R,U)\mathds{1}_{\{ \bo^{(M)}(x_0,x_1,\dots,x_n) \mbox{\tiny{ exists and }} \bo^{(M)}(x_0,x_1,\dots,x_n) \cap \Pl= \emptyset \}} \right)\\ 
&=\frac{\lambda^{n}}{n!}\int_{M^n} h(R,U)\proba(\bo^{(M)}(x_0,x_1,\dots,x_n) \mbox{{\small{ exists, }}$\bo^{(M)}(x_0,x_1,\dots,x_n) \cap \Pl= \emptyset$}) 
\dM(x_1) \dots \dM(x_n).
\end{align*}
We then follow line by line each step of the proof of Theorem \ref{THM} (i). For sake of brevity, we skip the details and only sketch the strategy, i.e.
\begin{itemize}
\item[-] we decompose the integral above and isolate the contribution of $(x_1,\dots,x_n)\in M_\lambda^{(n)}$,
\item[-] we show that the remainder is negligible,
\item[-] we apply the change of variables provided by \eqref{change} in the integral over $M_\lambda^{(n)}$, use the approximation of the Jacobian and of the volume of a ball given in Theorem \ref{Jacob}  and Lemma \ref{lem:volunif} respectively,
\item[-] we finally integrate over $u_1,\dots,u_n$ and the result follows.
\end{itemize}

\subsection{Proofs of Theorem \ref{THM} and Theorem \ref{DENS}, (ii)}
Let us derive the expectation of both the number of vertices of $\Cl$ and the density of the intensity measure of the point process ${\mathcal V}_{x_0,\lambda}^{(M)}$ in the particular cases  $M=\sph_k^n$ and $M=\mathcal{H}_k^n$. These both rely on the fact that the Jacobian determinant of the change of variables given by \eqref{change} and the volume of a geodesic ball have an explicit expression. Since  Theorem \ref{THM} (ii)  is immediately obtained by integrating the densities of Theorem \ref{DENS} (ii), we only prove the latter. 
\paragraph{Case $M=\sph^{n}_k$.} For any $(n+1)$-tuple, $x_0,\dots,x_n$  of $\sph^{n}_k$, there are exactly two circumscribed balls with antipodal centers: the smallest one with a radius $r<\frac{\pi}{2k}$, denoted by $\bo^{(\sph_k^n)}_1(y_0,\dots,y_n)$ and the largest one, denoted by $\bo^{(\sph_k^n)}_2(y_0,\dots,y_n)$, with radius $\frac{\pi}k -r$. In particular, $\bo^{(\sph_k^n)}_2(y_0,\dots,y_n) = \sph^{n-1}_k \backslash \bo^{(\sph_k^n)}_1(y_0,\dots,y_n)$. 
Then 
applying the Mecke-Slivnyak formula, we obtain
\begin{align}\label{eq:MeckeSn}
&\E\left(\sum_{(r,u)\in {\mathcal V}_{x_0,\lambda}^{({\mathcal{S}_k^n})}}h(r,u)\right)\nonumber\\
&= \frac{\lambda^n}{n!} \int_{x_1,\dots,x_n \in {\mathcal{S}_k^n}} h(R,U) \left( e^{-\lambda\vol^{({\mathcal{S}_k^n})}(\bo^{(\sph_k^n)}_1(x_0,\dots,x_n))} +   e^{-\lambda\vol^{({\mathcal{S}_k^n})}(\bo^{(\sph_k^n)}_2(x_0,\dots,x_n))} \right)\dvol^{({\mathcal{S}_k^n})}(x_1) \dots \dvol^{({\mathcal{S}_k^n})}(x_n).
\end{align}
where $R=R(x_1,\dots,x_n)$ and $U=U(x_1,\dots,x_n)$ are the functionnals introduced in the previous section.
In \eqref{eq:MeckeSn}, we proceed with the following change of variables:
\begin{equation}
\label{eq:changeRescale}
  x_i = \exp_{|\exp_{x_0}(\lambda^{-\frac 1n}ru)} (\lambda^{-\frac 1n}r u_i).
 \end{equation}
From Proposition \ref{const}, the Jacobian determinant of this change of variables in $\sph_k^n$ is 
\begin{equation}\label{eq:BPSn}
n! \Delta(u,u_1,\dots,u_n)\lambda^{-\frac 1n} \left(\frac{\sin(k\lambda^{-\frac 1n}r)}{k}\right)^{n^2-1}.  
\end{equation}
Now, in these new coordinates, 
\begin{align}
\vol^{({\mathcal{S}_k^n})}(\bo^{({\mathcal{S}_k^n})}_1(x_0,\dots,x_n)) &= \sigma_{n-1} \int_{0}^{\lambda^{-\frac 1n}r} \left(\frac{\sin(kt)}{k}\right)^{n-1} \mathrm{d}t \notag \\
& = \sigma_{n-1} \int_{0}^{r} \left(\frac{\sin(k\lambda^{-\frac 1n}t)}{k}\right)^{n-1} \lambda^{\frac 1n}\mathrm{d}t
\label{eq:volSn}
\end{align}
and similarly, 
\begin{equation}\label{eq:volSnbis}
\vol^{({\mathcal{S}_k^n})}(\bo^{({\mathcal{S}_k^n})}_2(x_0,\dots,x_n)) = \sigma_{n-1} \int_{r}^{\lambda^{\frac 1n}\frac{\pi}k} \left(\frac{\sin(k\lambda^{-\frac 1n}t)}{k}\right)^{n-1} \lambda^{\frac 1n}\mathrm{d}t.
\end{equation}
Inserting \eqref{eq:BPSn}, \eqref{eq:volSn} and \eqref{eq:volSnbis} into \eqref{eq:MeckeSn}, we obtain 
\begin{align*}
&\E\left(\sum_{(r,u)\in {\mathcal V}_{x_0,\lambda}^{({\mathcal{S}_k^n})}}h(r,u)\right)\nonumber\\&= \lambda^{n}\int_{r=0}^{\lambda^{\frac 1n}\frac{\pi}k}\int_{u,u_1,\dots,u_n \in \sph^{n-1}} h(R,U) \left( e^{-\lambda\sigma_{n-1} \int_{0}^{r} \left(\frac{\sin(k\lambda^{-\frac 1n}t)}{k}\right)^{n-1} \lambda^{\frac 1n}\mathrm{d}t} + e^{-\lambda  \sigma_{n-1} \int_{r}^{\lambda^{\frac 1n}\frac{\pi}k} \left(\frac{\sin(k\lambda^{-\frac 1n}t)}{k}\right)^{n-1} \lambda^{\frac 1n}\mathrm{d}t}     \right) \\&\times \Delta(u,u_1,\dots,u_n)\lambda^{-\frac 1n}\left(\frac{\sin(k\lambda^{-\frac 1n}r)}{k}\right)^{n^2-1} \mathrm{d}r \dvol^{(\sph^{n-1})}(u)\dvol^{(\sph^{n-1})}(u_1)\dots\dvol^{(\sph^{n-1})}(u_n) \\
&= k_n  (k\lambda^{-\frac 1n})^{1-n^2 }\int_{r=0}^{\lambda^{\frac 1n}\frac{\pi}k}\int_{u\in \sph^{n-1}} h(R,U) \left( e^{- (k\lambda^{-\frac 1n})^{1-n}\sigma_{n-1} \int_{0}^{r}\sin(k\lambda^{-\frac 1n}t) \mathrm{d}t } +e^{- (k\lambda^{-\frac 1n})^{1-n}\sigma_{n-1} \int_{r}^{\lambda^{\frac 1n}\frac{\pi}k}\sin(k\lambda^{-\frac 1n}t) \mathrm{d}t } \right)\\& \times\sin(k\lambda^{-\frac 1n}r) \mathrm{d}r \dvol^{\sph^{n-1}}(u)
\end{align*}
where $k_n$ is the same constant as in the case $\R^n$.

It follows, since $h$ is a test function, that the intensity measure has the density 

\begin{equation} \label{eq:densSn}
g_\lambda^{({\mathcal{S}_k^n})}(r,u) =a_n\bigg(e^{-\sigma_{n-1} \displaystyle\int_{0}^{r}{\mathfrak{s}}_{k\lambda^{-\frac 1n}}^{n-1}(t) \mathrm{d}t } +e^{- \sigma_{n-1} \displaystyle\int_{r}^{\lambda^{\frac 1n} \frac{\pi}{k}}
{\mathfrak{s}}_{k\lambda^{-\frac 1n}}^{n-1}(t) \mathrm{d}t }\bigg)
{\mathfrak{s}}_{k\lambda^{-\frac 1n}}^{n^2-1}(r) 
\end{equation}
where $a_n$ is given in Theorem \ref{DENS} and ${\mathfrak{s}}_{\alpha}(t)=
                        \frac{\sin(\alpha t)}{\sqrt{\alpha}} $.

\paragraph{Case $M=\mathcal{H}_k^{n}$.} 
We proceed in the exact same way, save for the notable difference that the circumscribed ball is unique.
We deduce that
\begin{align}\label{eq:MeckeHn}
&\E\left(\sum_{(r,u)\in {\mathcal V}_{x_0,\lambda}^{({\mathcal{H}_k^n})}}h(r,u)\right)\nonumber\\
&= \frac{\lambda^n}{n!} \int_{x_1,\dots,x_n \in {\mathcal{H}_k^n}} h(R,U) e^{-\lambda\vol^{({\mathcal{H}_k^n})}(\bo^{({\mathcal{H}_k^n})}(x_0,\dots,x_n))} \dvol^{({\mathcal{H}_k^n})}(x_1) \dots \dvol^{({\mathcal{H}_k^n})}(x_n).
\end{align} 
where $R=R(x_1,\dots,x_n)$ and $U=U(x_1,\dots,x_n)$ are the functionnals introduced in the previous section.
From Proposition \ref{const}, the Jacobian determinant of the change of variables given by \eqref{eq:changeRescale} is 
\begin{equation}\label{eq:BPHn}
 \widetilde{\J}_{x_0,\lambda}^{({\mathcal{H}_k^n})}(r,u,u_1,\dots,u_n) = n! \Delta(u,u_1,\dots,u_n)\lambda^{-\frac 1n} \left(\frac{\sinh(k\lambda^{-\frac 1n}r)}{k}\right)^{n^2-1}.  
\end{equation}
Now, in this new coordinates, 
\begin{align}
\vol^{({\mathcal{H}_k^n})}(\bo^{({\mathcal{H}_k^n})}(x_0,\dots,x_n)) &= \sigma_{n-1} \int_{0}^{\lambda^{-\frac 1n}r} \left(\frac{\sinh(kt)}{k}\right)^{n-1} \mathrm{d}t \notag \\
& = \sigma_{n-1} \int_{0}^{r} \left(\frac{\sinh(k\lambda^{-\frac 1n}t)}{k}\right)^{n-1} \lambda^{\frac 1n}\mathrm{d}t
\label{eq:volHn}
\end{align}
Inserting \eqref{eq:BPHn} and \eqref{eq:volHn} into \eqref{eq:MeckeHn}, we obtain that
the intensity measure has the density 
\begin{equation} \label{eq:densHn}
g_\lambda^{({\mathcal{H}_k^n})}(r,u) =a_ne^{-\sigma_{n-1} \displaystyle\int_{0}^{r}{\mathfrak{s}}_{-k\lambda^{-\frac 1n}}^{n-1}(t) \mathrm{d}t }
{\mathfrak{s}}_{-k\lambda^{-\frac 1n}}^{n^2-1}(r).
\end{equation}
where 
${\mathfrak{s}}_{\alpha}(t)=
												\frac{\sinh((-\alpha)t)}{-\alpha}$.  
			


\section{Sectional Voronoi tessellation: proof of Theorems \ref{volSec} and \ref{NSec}}\label{sec:sec}
In this section, we prove Theorems \ref{volSec} and \ref{NSec} which contain asymptotic expansions of both the volume and the number of vertices of a cell from a section of the original Voronoi tessellation. The methods being very close to what has already been done in the proofs of Theorem \ref{thm:propvol} and Theorem \ref{THM}, we will omit several technical details. 

Let $V_s \subset T_{x_0}M$ be a vector space of dimension $s$ and recall that $M_s = \exp_{x_0}(V_s)$. 

\subsection{Proof of Theorem  \ref{volSec}, (i)}
By Fubini's theorem, the mean $s$-content of $\Cl\cap M_s$ is 
\begin{equation}\label{eq:fubSection}
\E[ \vol^{(M_s)}(\Cl\cap M_s)] = \int_{M_s} e^{-\lambda \vol^{(M)}(\bo^{(M)}(x, d^{(M)}(x_0,x)))} \dvol^{(M_s)}(x).
\end{equation}
As in the proof of Theorem \ref{thm:propvol}, we calculate this integral  by discriminating between points $x$ close to $x_0$ and points $x$ far from $x_0$. Indeed, the volume $\vol^{(M)}(\bo^{(M)}(x, d^{(M)}(x_0,x)))$ and the volume element $\dvol^{(M_s)}$ can be expanded when the distance between $x$ and $x_0$ tends to $0$, using Lemmas \ref{lem:volunif} and \ref{lem:jacobchtsph} while the contribution of the points $x$ far from $x_0$ is expected to decrease exponentially fast.

\noindent \textit{Step 1: decomposition of $\E[\vol^{(M_s)}(\Cl\cap M_s)]$ into two integrals.} Let $r_\lambda= \lambda^{-\frac{n+1}{n(n+2)}}$ and let us write the integral in \eqref{eq:fubSection} as  
\begin{equation}\label{eq:decoupIntVolSection}
\E[ \vol^{(M_s)}(\Cl\cap M_s)] = I_\lambda + \tilde{I}_\lambda 
\end{equation}
where 
\begin{equation}\label{eq:defIlVolSection} I_\lambda = \int_{\bo^{(M)}(x_0,r_\lambda) \cap M_s} e^{-\lambda \vol^{(M)}(\bo^{(M)}(x, d^{(M)}(x_0,x)))} \dvol^{(M_s)}(x) \end{equation}
and 
\begin{equation}\label{eq:defItilVolSection} \tilde{I}_\lambda = \int_{M_s \setminus \bo^{(M)}(x_0,r_\lambda)} e^{-\lambda \vol^{(M)}(\bo^{(M)}(x, d^{(M)}(x_0,x)))} \dvol^{(M_s)}(x) .\end{equation}
\noindent\textit{Step 2: $\tilde{I}_\lambda$ is negligible.} We prove that $\tilde{I}_\lambda$ given at \eqref{eq:defItilVolSection} is negligible in front of $\frac{1}{\lambda^{\frac{s+2}{n}}}$. Observe that $M_s$ is compact since it is included in $M$, thus the same arguments as in Step 2 of Section \ref{sec:vol} imply that 
\[ \tilde{I}_\lambda \le \vol^{(M_s)}(M_s)  e^{-c\lambda^{1-\frac{n+1}{n+2}}} ,\]
hence 
\begin{equation}\label{eq:ItilNegligVolSection}
 \tilde{I}_\lambda = o\left( \frac{1}{\lambda^{\frac{s+2}{n}}}\right). 
\end{equation}

\noindent\textit{Step 3 : estimate of $I_\lambda$}. We prove now that when $\lambda\to\infty$
\begin{equation}\label{eq:devIlfinalVolSec}
I_\lambda =  v_{n,s}\frac{1}{\lambda^{\frac sn}}
+\left(u_{n,s}{\Scal_{x_0}^{(M)}}- w_{n,s}{\Scal_{x_0}^{(M_s)}} \right)\frac{1}{\lambda^{\frac{s+2}n}} + o\left( \frac{1}{\lambda^{\frac{s+2}{n}}}\right),
\end{equation} 
where the constants $u_{n,s}, v_{n,s}$ and $w_{n,s}$ are given in the statement of Theorem \ref{volSec}. To this end, we apply in $I_\lambda$, the spherical change of variables given by 
\begin{equation}\label{eq:changeSphericalVolSec}
 \varphi_{x_0,V_s} : \left\{\begin{array}{rll}(0,R_{\mbox{\tiny inj}})\times \{u\in V_s: \|u\|_{x_0}=1\}&\longrightarrow& M_s\\(r,u)&\longmapsto &\exp_{x_0}(ru)\end{array}\right.
\end{equation}
which is the restriction to $V_s$ of $\varphi_{x_0}$ given by \eqref{eq:changeSpherical}. Let us denote by $\mathcal{J}_{x_0,s}$ the Jacobian determinant of this change of variables. Thus, we get

\begin{equation}\label{eq:IntegralApChangeVolSection}
I_\lambda = \int_{0}^{r_\lambda} \int e^{-\lambda \vol^{(M)}(\bo^{(M)}(x,r))} |\mathcal{J}_{x_0,s}(r,u)| \dd r \dvol^{(\sph^{s-1})}(u) .
\end{equation} 

We now need to replace $\vol^{(M)}(\bo^{(M)}(x,r))$ and $\mathcal{J}_{x_0,s}(r,u)$ by suitable approximations. Let $\varepsilon>0$. 
The same arguments as in the proof of Lemma \ref{lem:jacobchtsph} imply that $\mathcal{J}_{x_0,s}$ satisfies, when $r\to 0$, 
\begin{equation}\label{eq:devJacobienSphericalSection}
 \mathcal{J}_{x_0,s}(r,u) = r^{s-1} - \frac{ \Ric^{(M_s)}_{x_0}(u)}{6}r^{s+1}  +o(r^{s+1}), 
\end{equation}
where $\Ric^{(M_s)}_{x_0}(u)$ is the Ricci curvature of the submanifold $M_s$ at $u$, that is 
\[  \Ric^{(M_s)}_{x_0}(u) = \sum_{i=2}^{s} K_{x_0}^{(M)}(u,u_i), \]
and $\{ u_1=u,u_2,\dots,u_s \}$ is an orthonormal basis of $V_s$.
It follows that, for $\lambda$ large enough, $0<r <r_\lambda$, $u\in V_s$ with norm $1$, 
 \begin{equation} \label{eq:inegJacobienVolSection}
      r^{s-1} - \frac{ \Ric^{(M_s)}_{x_0}(u)}{6}r^{s+1} -\varepsilon r^{s+1}   \le  \mathcal{J}_{x_0,s}(r,u) \le   r^{s-1} - \frac{ \Ric^{(M_s)}_{x_0}(u)}{6}r^{s+1} +\varepsilon r^{s+1}. 
\end{equation} 
Moreover, the inequalities \eqref{eq:inegVolumeC} still hold, that is 
\begin{align}
     e^{-\lambda \kappa_n r^n} \left[ 1+ \left(\frac{\kappa_n \Scal_{x_0}^{(M)}}{6(n+2)}  -\varepsilon\right)\lambda r^{n+2} \right]&\le e^{-\lambda \vol^{(M)}(\bo^{(M)}(x,r))} 
\le e^{-\lambda \kappa_n r^n} \left[1+\left(\frac{\kappa_n \Scal_{x_0}^{(M)}}{6(n+2)}  +2\varepsilon\right)\lambda r^{n+2}\right].
\label{eq:inegVolumeBouleSection} 
\end{align}
Inserting \eqref{eq:inegJacobienVolSection} and \eqref{eq:inegVolumeBouleSection} into \eqref{eq:IntegralApChangeVolSection}, we obtain 
\begin{equation}\label{eq:encadreIlVolSection}
 I_{\lambda,-} \le I_\lambda \le I_{\lambda,+} , 
\end{equation}
where 
\begin{align}
I_{\lambda,-} &=\int_{0}^{\lambda^{-\frac{n+1}{n(n+2)}}} \int
e^{-\lambda \kappa_n r^n} \left[ 1+ \left(\frac{\kappa_n \Scal_{x_0}^{(M)}}{6(n+2)}  -\varepsilon\right)\lambda r^{n+2} - \left(\frac{ \Ric^{(M_s)}_{x_0}(u)}{6} +\varepsilon\right) r^{2}\right.\nonumber \\
&\hspace*{4cm}\left.-\left(\frac{\kappa_n \Scal_{x_0}^{(M)}}{6(n+2)}  -\varepsilon\right)\left(\frac{ \Ric^{(M_s)}_{x_0}(u)}{6} +\varepsilon\right)\lambda r^{n+4}\right]  \dvol^{(\sph^{s-1})}(u)r^{s-1}\dd r \notag
\end{align}
and 
\begin{align}
I_{\lambda,+} &=\int_{0}^{\lambda^{-\frac{n+1}{n(n+2)}}} \int  
e^{-\lambda \kappa_n r^n} \left[ 1+ \left(\frac{\kappa_n \Scal_{x_0}^{(M)}}{6(n+2)}  +2\varepsilon\right)\lambda r^{n+2} - \left(\frac{ \Ric^{(M_s)}_{x_0}(u)}{6} -\varepsilon\right) r^{2}\right.\nonumber \\
&\hspace*{4cm}\left.-\left(\frac{\kappa_n \Scal_{x_0}^{(M)}}{6(n+2)}  +2\varepsilon\right)\left(\frac{ \Ric^{(M_s)}_{x_0}(u)}{6} -\varepsilon\right)\lambda r^{n+4}\right]  \dvol^{(\sph^{n-1})}(u)r^{s-1}\dd r. \notag
\end{align}

In order to derive upper bounds for $I_{\lambda,+}$, we proceed analogously to the proof of Theorem \ref{thm:propvol}, that is 
\begin{itemize}
\item[-]  we start by applying the change of variables $y=\lambda\kappa_n r^n$, taking $y$ from $0$ to $\infty$,
\item[-]  we use Assumption \mbox{($\mbox{A}_1$)} to show that the integral of the last term of $I_{\lambda,+}$ is bounded by $\lambda^{-\frac4n -\frac sn+1}$ up to a multiplicative constant,
\item[-]  we calculate the integral of the remaining terms over $u$ first, using \eqref{eq:scalRicci} applied to the manifold $M_s$ and then over $y$.
\end{itemize}
Thus, for any $\varepsilon>0$, we obtain that, for $\lambda$ large enough 
\begin{align}
I_{\lambda,+}& \le v_{n,s}\frac{1}{\lambda^{\frac sn}}
+\left(u_{n,s}{\Scal_{x_0}^{(M)}}- w_{n,s}{\Scal_{x_0}^{(M_s)}} \right)\frac{1}{\lambda^{\frac{s+2}n}} +\frac{\varepsilon}{\lambda^{\frac{s+2}n}} \label{eq:I+Volsec}
\intertext{and similarly}
I_{\lambda,-} &\ge v_{n,s}\frac{1}{\lambda^{\frac sn}}
+\left(u_{n,s}{\Scal_{x_0}^{(M)}}- w_{n,s}{\Scal_{x_0}^{(M_s)}} \right)\frac{1}{\lambda^{\frac{s+2}n}} -\frac{\varepsilon}{\lambda^{\frac{s+2}n}} \label{eq:I-Volsec},
\end{align}
Now, \eqref{eq:I+Volsec} and \eqref{eq:I-Volsec} implies \eqref{eq:devIlfinalVolSec} and combining with \eqref{eq:ItilNegligVolSection} and \eqref{eq:decoupIntVolSection}, we obtain Theorem  \ref{volSec} (i). 
\subsection{Proof of Theorem \ref{volSec}, (ii)}
We now derive the explicit formulas in the particular cases $M=\sph_k^{n}$ and $M={\mathcal H}_k^n$. Again, the proof follows from the fact that the Jacobian determinant of the change of variables given by \eqref{eq:changeSphericalVolSec} and the volume of geodesic balls have exact expressions.

\paragraph{Case $M=\sph_k^{n}$.} In \eqref{eq:fubSection},
let us apply the spherical change of variables given by \eqref{eq:changeSphericalVolSec} with $0\le r \le \frac{\pi}k$. 
For $M=\sph_k^n$, the Jacobian determinant of this change of variables satisfies
\begin{equation}
  \label{eq:sphSectionJac}
 \J_{x_0,s}(r,u) = \left(\frac{\sin(kr)}k \right)^{s-1}.
\end{equation}
Moreover recall that, 
\begin{equation}
  \label{eq:boulesphSec}
 \vol^{(\sph^n_k)}(\bo^{(\sph^n_k)}(x,r)) = \sigma_{n-1}\int_{t=0}^{r} \left(\frac{\sin(kt)}k \right)^{n-1} \dd t  
\end{equation}
Combining \eqref{eq:fubSection} for $M=\sph_k^n$, \eqref{eq:sphSectionJac} and \eqref{eq:boulesphSec},  we obtain
\begin{align*}
 \E[\vol^{(M_s)}({\mathcal C}_{x_0,\lambda}^{(\sph^n_k)}\cap   M_s)] &= \int_{r=0}^{\frac{\pi}{k}} \int_{u\in\sph^{s-1}}e^{-\lambda\sigma_{n-1}\int_{t=0}^{r} \left(\frac{\sin(kt)}k \right)^{n-1} \dd t }  \left(\frac{\sin(kr)}k \right)^{s-1} \dd r\dvol^{(\sph^{s-1})}(u) \\
&= \vol^{(\sph^{s-1})}(\sph^{s-1})\int_{r=0}^{\frac{\pi}{k}}e^{-\lambda\sigma_{n-1}\int_{t=0}^{r} \left(\frac{\sin(kr)}k \right)^{n-1} \dd t }  \left(\frac{\sin(kr)}k \right)^{s-1} \dd r, 
\end{align*}
as claimed. 

\paragraph{Case $M=\mathcal{H}_k^{n}$.} We proceed in exactly the same way as in the case of the sphere.
In \eqref{eq:fubSection},
we  apply the change of variables given by  \eqref{eq:changeSphericalVolSec}, that the Jacobian satisfies, when $M= \mathcal{H}_k^n$, $r\ge 0$
\begin{equation}
  \label{eq:HypSectionJac}
 \J_{x_0,s}(r,u) = \left(\frac{\sinh(kr)}k \right)^{s-1}  .
\end{equation}
Moreover recall that, 
\begin{equation}
  \label{eq:bouleHypSec}
 \vol^{(\mathcal{H}^n_k)}(\bo^{(\mathcal{H}^n_k)}(x,r)) = \sigma_{n-1}\int_{t=0}^{r} \left(\frac{\sinh(kt)}k \right)^{n-1} \dd t  
\end{equation}
Combining \eqref{eq:fubSection} for $M=\mathcal{H}_k^n$, \eqref{eq:HypSectionJac} and \eqref{eq:bouleHypSec},  we obtain
\begin{align*}
 \E[\vol^{(M_s)}({\mathcal C}_{x_0,\lambda}^{(\mathcal{H}^n_k)}\cap   M_s)] &= \int_{r\ge 0} \int_{u\in\sph^{s-1}}e^{-\lambda\sigma_{n-1}\int_{t=0}^{r} \left(\frac{\sinh(kt)}k \right)^{n-1} dt }  \left(\frac{\sinh(kr)}k \right)^{s-1} \dd r\dvol^{(\sph^{s-1})}(u) \\
&= \vol^{(\sph^{s-1})}(\sph^{s-1})\int_{r\ge 0}e^{-\lambda\sigma_{n-1}\int_{t=0}^{r} \left(\frac{\sinh(kr)}k \right)^{n-1} dt }  \left(\frac{\sinh(kr)}k \right)^{s-1} \dd r.
\end{align*}
This concludes the proof of Theorem \ref{volSec} (ii). 

\subsection{Proof of Theorem \ref{NSec}, (i)}

Observe that a vertex of ${\mathcal C}_{x_0,\lambda}^{(M)}\cap   M_s$ is the intersection of $M_s$ and a $(n-s)$-face of the cell of $\Cl$. Such a $(n-s)$-face is itself the intersection of $\Cl$ and the cells of $s$ points of the process $\Pl$. As in the proof of Theorem \ref{THM}, the expansion of the mean number of vertices of ${\mathcal C}_{x_0,\lambda}^{(M)}\cap   M_s$ relies on the rewriting of the volume element $\dvol^{(M)}(x_1)\dots \dvol^{(M)}(x_s)$ in the spirit of Theorem \ref{Jacob}. More precisely, let us define the application $\Phi_{x_0}^s :(r,u,u_1,\dots u_s) \mapsto (x_1,\dots,x_s)$ by the identity
\begin{equation*}
 x_i= \exp_{|\exp_{x_0}(ru)}(ru_i), 
\end{equation*}
where $r>0$, $u$ is a unit vector of  $V_s$ and $u_1,\dots, u_s$ are unit vectors of $T_{\exp_{x_0}(ru)}M$. When $r\to 0$, we can show that the Jacobian determinant $\tilde{\J}_{x_0}^s$ of $\Phi_{x_0}^s$, satisfies the expansion 
\begin{equation}\label{eq:JacobienSection}
\tilde{\J}_{x_0}^s(r,u,u_1,\dots,u_s) = s! \Delta_s(u,u_1,\dots,u_s)( r^{sn-1} -\frac{L_{x_0}^s(u,u_1,\dots,u_s)}{6}  r^{sn+1} +o(r^{sn+1}))
\end{equation}
where $L_{x_0}^s(u,u_1,\dots,u_s)=\Ric_{x_0}^{(M_s)}(u) + \sum_{i=1}^{s} \Ric_{x_0}^{(M)}(u_i)$ and $\Delta_s(u,u_1,\dots,u_s)$ is the volume of the simplex spanned by $-u$ and the projection of $u_1,\dots,u_s$ on $V_s$. We omit the proof of \eqref{eq:JacobienSection} since it is very similar to the proof of Theorem \ref{Jacob}. Note that for $r$ small enough, the Jacobian $\J_{x_0}^s$ is different from zero almost everywhere. Then, from the inverse function theorem, the function $\Phi_{x_0}^s$ defines a change of variables. 
Consequently, a reasoning along the lines of the beginning of Section \ref{sec:vert} implies that without loss of generality, for almost all
 $(x_1,\dots,x_s) \in M$, there is at most one circumscribed ball of $x_0,\dots x_s$ centered in $M_s$, denoted by $\bo^{s}(x_0,\dots,x_s)$. 

The expectation of the number of vertices can be written as 
\begin{equation*}
\E[N(\Cl\cap M_s)]= \E[ \sum_{\{x_1,\dots,x_s\} \subset \Pl} \mathds{1}_{\{\bo^{s}(x_0,x_1,\dots,x_s)\text{ exists and }\bo^{s}(x_0,x_1,\dots,x_s)\cap \Pl =\emptyset \}}].
\end{equation*}
Then applying the Mecke-Slivnyak formula, we get that
 \begin{align}\label{eq:sliNsec}
\E[N(\Cl\cap M_s)]&= \frac{\lambda^s}{s!} \int_{\tilde{M}^s} e^{-\lambda \vol^{(M)}(\bo^{s}(x_0,x_1,\dots,x_s))} \dvol^{(M)}(x_1) \dots \dvol^{(M)}(x_s). \end{align}
where 
\[ \widetilde{M}^s= \{ (x_1,\dots,x_s)\in M^s, \bo^{s}(x_0,x_1,\dots,x_s) \text{ exists }\} .\]

%
%

Let $r_\lambda = \lambda^{-\frac{n+1}{n(n+2)}}$ and let us define $M_\lambda^{(s)}$ by 
\[ M_\lambda^{(s)} = \{ (x_1,\dots,x_s)\in \widetilde{M}^s, \text{ the radius of }\bo^{s}(x_0,\dots,x_s) \text{ is smaller than } r_\lambda\} .\]
The expansion of $\E[N(\Cl\cap M_s)]$ is derived by following the same strategy as in the proof of Theorem \ref{THM}, that is 

\begin{itemize}
\item[-]  we decompose the integral in \eqref{eq:sliNsec} and isolate the contribution of points $(x_1,\dots,x_s) \in M_\lambda^{(s)}$,
\item[-]  we show that the integral over $M^s\setminus M_\lambda^{(s)}$ is negligible in front of $\lambda^{-\frac{2}n}$, 
\item[-]  we apply the change of variables given by $\Phi_{x_0}^s$ in the integral over $M_\lambda^{(s)}$ and use the approximations of the Jacobian and of the volume of a ball provided by \eqref{eq:JacobienSection} and Lemma \ref{lem:volunif} respectively,
\item[-]  we apply the change of variables $y=\lambda\kappa_n r^n$, taking $y$ from $0$ to $\infty$, 
\item[-]  we deduce the expansion of $\E[N(\Cl\cap M_s)]$ from the integration over $u,u_1,\dots,u_s$ and then over $y$.  
\end{itemize}

\subsection{Proof of Theorem \ref{NSec}, (ii)}
We now derive exact formulas for $\E[N(\Cl \cap M_s)]$, in the particular cases $M=\sph_k^{n}$ and $M=\mathcal{H}_{k}^n$. Again, these results come from the exact expressions of the Jacobian determinant of the change of variables given by $\Phi_{x_0}^s$ and of the volume of geodesic balls in these two cases. 

\paragraph{Case $M=\sph_k^{n}$.} 
 Observe that for each  $(x_1,\dots,x_s) \in (\sph_k^{n})^s$, there are exactly two balls in $\sph^{n}_k$, with center in $M_s$ and containing $x_0,\dots,x_s$ on their boundaries: the smallest one denoted by $\bo^{(s)}_1(x_0,\dots,x_s)$, with radius $r \le \frac{\pi}{2k}$ and $\bo^{(s)}_2(x_0,\dots,x_s)= \sph_k^{n}\setminus \bo^{(s)}_1(x_0,\dots,x_s)$. Then, applying the Mecke-Slivnyak formula, we obtain 
\begin{align}
\E[N({\mathcal C}_{x_0,\lambda}^{(\sph_k^n)} \cap M_s) &= \E[ \sum_{\{x_1,\dots,x_s\} \subset \Pl} \mathds{1}_{\{ \bo^{(s)}_1(x_0,\dots,x_s) \cap \Pl =\emptyset \}} + \mathds{1}_{\{ \bo^{(s)}_2(x_0,\dots,x_s) \cap \Pl =\emptyset \}} ] \notag \\
&= \int_{(\sph_k^{n})^s} \left( e^{-\lambda \vol^{(\sph_k^n)}(\bo^{(s)}_1(x_0,\dots,x_s))} + e^{-\lambda \vol^{(\sph_k^n)}(\bo^{(s)}_2(x_0,\dots,x_s))} \right) \dvol^{(\sph_k^n)}(x_1) \dots \dvol^{(\sph_k^n)}(x_s) \label{eq:MSsphNsection}
\end{align} 
Let us make the change of variables given by $\Phi_{x_0}^s$ in \eqref{eq:MSsphNsection}, with $r\le \frac{\pi}{2k}$, in the case $M=\sph_k^{n}$. Then, the Jacobian determinant $\tilde{\J}_{x_0}^s$ is given by 
\begin{equation}\label{eq:JacobNsecSph}
\tilde{\J}_{x_0}^s(r,u,u_1,\dots,u_s) = s! \Delta_s(u,u_1,\dots,u_s) \left( \frac{\sin(kr)}k \right)^{ns-1}.
\end{equation}
In these new coordinates, \eqref{eq:volSn} and \eqref{eq:volSnbis} are still valid for the balls $\bo^{(s)}_1(x_0,\dots,x_s)$ and $\bo^{(s)}_2(x_0,\dots,x_s)$. 
The expression of $\E[N({\mathcal C}_{x_0,\lambda}^{(\sph_k^n)} \cap M_s)$ then follows by inserting \eqref{eq:volSn}, \eqref{eq:volSnbis} and \eqref{eq:JacobNsecSph} into \eqref{eq:MSsphNsection} and integrating over $u,u_1,\dots,u_s$ as in the proof of Theorem \ref{THM} (ii). 

\paragraph{Case $M=\mathcal{H}_k^{n}$.} Contrary to the case of the sphere, for each  $(x_1,\dots,x_s) \in (\mathcal{H}_k^{n})^s$, there exists exactly one ball, denoted by $\bo^{(s)}(x_0,\dots,x_s)$ in $\mathcal{H}_k^n$, centered in $M_s$ and containing $x_0,\dots,x_s$  on its boundary. Then applying the Mecke-Slivnyak 
\begin{align}
\E[N({\mathcal C}_{x_0,\lambda}^{(\mathcal{H}_k^n)} \cap M_s) ]&= \E[ \sum_{\{x_1,\dots,x_s\} \subset \Pl} \mathds{1}_{\{ \bo^{(s)}(x_0,\dots,x_s) \cap \Pl =\emptyset \}}  ] \notag \\
&= \int_{(\mathcal{H}_k^{n})^s}  e^{-\lambda \vol^{(\mathcal{H}_k^{n})}(\bo^{(s)}(x_0,\dots,x_s))} \dvol^{(\mathcal{H}_k^{n})}(x_1) \dots \dvol^{(\mathcal{H}_k^{n})}(x_s) \label{eq:MShypNsection}.
\end{align} 
We make the change of variables given by $\Phi_{x_0}^s$ in \eqref{eq:MShypNsection}. 
The Jacobian of the change of variables given by $\Phi_{x_0}^s$ when $M=\mathcal{H}_{k}^n$ satisfies 
\begin{equation}\label{eq:JacobNsecHyp}
\tilde{\J}_{x_0}^s(r,u,u_1,\dots,u_s) = s! \Delta_s(u,u_1,\dots,u_s) \left( \frac{\sinh(kr)}k \right)^{ns-1}.
\end{equation}
Moreover in the new coordinates, the expression for the volume of the ball provided by \eqref{eq:volHn} is still valid for $\bo^{(s)}(x_0,\dots,x_s)$. Thus inserting  \eqref{eq:JacobNsecHyp} and  \eqref{eq:volHn} into \eqref{eq:MShypNsection}, and then integrating over $u,u_1,\dots,u_s$, we obtain the expected formula for $\E[N({\mathcal C}_{x_0,\lambda}^{(\mathcal{H}_k^n)} \cap M_s) ]$ . 
This concludes the proof of Theorem \ref{NSec}, (ii).

\section{Probabilistic proof of the Gauss-Bonnet theorem}\label{sec:GB}
In this section, we consider a compact surface $S$ and we denote by $K:S\to \R$ the Gaussian curvature of $S$. 
The classical Gauss-Bonnet theorem states that the Euler characteristic of $S$, denoted by $\chi(S)$, satisfies the relation
\begin{equation}\label{eq:GB}
\chi(S) = \frac{1}{2\pi} \int_S K(x) \dvol^{(S)}(x).
\end{equation}
We start by showing a slight reinforcement of Theorem \ref{THM} which is the uniformity of the two-term expansion of $\E[N({\mathcal C}_{x_0,\lambda}^{(\R^n)})]$ when $M$ is a compact Riemannian manifold.
\begin{prop}\label{prop:thmprincipalunif}
When $M$ is a compact Riemannian manifold, we get
$$\sup_{x_0\in M}\lambda^{\frac 2n}\left(\E[\Nl] - e_n + d_n\frac{\Scal_{x_0}^{(M)}}{\lambda^{\frac 2n}}\right)\underset{\lambda\to\infty}{\to} 0.$$
\end{prop}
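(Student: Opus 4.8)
The plan is to revisit the proof of Theorem \ref{THM} (i) and check that every estimate used along the way can be taken uniform in $x_0 \in M$, the compactness of $M$ being what makes this possible. Recall that the proof splits $\E[\Nl] = I_\lambda + \tilde I_\lambda$, where $\tilde I_\lambda$ is the contribution of circumscribed balls of radius at least $r_\lambda = \lambda^{-\frac{n+1}{n(n+2)}}$ and $I_\lambda$ is the contribution of the smaller ones. First I would treat $\tilde I_\lambda$: the bound $\tilde I_\lambda \le \vol^{(M)}(M)^n e^{-c\lambda^{1-\frac{n+1}{n+2}}}$ relied only on Lemma \ref{lem:volunif} (ii), which already provides a lower bound for the volume of small balls that is uniform in the center over all of $M$, and on the global finiteness of $\vol^{(M)}(M)$. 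Hence $\sup_{x_0\in M}\lambda^{\frac 2n}\tilde I_\lambda \to 0$ with essentially no extra work, since the constant $c$ there does not depend on $x_0$.

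The bulk of the argument is the uniformity of the expansion \eqref{eq:devFinI} for $I_\lambda$. The key point is that the two ingredients feeding the sandwich $I_{\lambda,-}\le I_\lambda\le I_{\lambda,+}$ are both already available in uniform form: the volume estimate \eqref{eq:inegVolumeC} comes from Lemma \ref{lem:volunif} (ii), valid uniformly over $x_0\in M$, and the Jacobian estimate \eqref{eq:inegJacobExp} must be replaced by its uniform counterpart, namely Proposition \ref{prop:jacobunif}, which gives $|\reste(x_0,r,u,u_1,\dots,u_n)|\le cr$ with $c$ independent of $x_0$ and of the unit vectors. Concretely, I would rerun Step 3 of the proof of Theorem \ref{THM} (i): fix $\varepsilon > 0$; by Lemma \ref{lem:volunif} (ii) and Proposition \ref{prop:jacobunif} there is $\lambda_0$, \emph{not depending on $x_0$}, such that for $\lambda\ge\lambda_0$ the inequalities \eqref{eq:inegVolumeC} and \eqref{eq:inegJacobExp} hold for every $x_0\in M$ and every $0<r<r_\lambda$; the subsequent change of variables $y=\lambda\kappa_n r^n$, the use of assumption ($\mathrm{A}_1$) to discard the $\lambda^{-4/n}$ remainder (the sectional curvatures being uniformly bounded, the constant in that bound is uniform in $x_0$), the integration against $\Delta(u,u_1,\dots,u_n)$ via \cite[Theorem 2]{Mil71bis}, and the identity \eqref{eq:scalRicci} are all purely algebraic and carry the uniformity through unchanged. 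One then obtains, for $\lambda\ge\lambda_0$ and every $x_0\in M$,
\[ \left| \lambda^{\frac 2n}\left(I_\lambda - e_n + d_n\frac{\Scal_{x_0}^{(M)}}{\lambda^{\frac 2n}}\right)\right| \le c\varepsilon + \frac{c}{\lambda^{\frac 2n}}, \]
with $c$ not depending on $x_0$. Combining this with the uniform bound on $\tilde I_\lambda$ and letting $\lambda\to\infty$ then $\varepsilon\to 0$ yields the claim.

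The main obstacle — really the only place where something genuinely needs to be said rather than merely inspected — is to make sure that \emph{all} the auxiliary remainders that were swept into ``$o(\cdot)$'' in the original proof are in fact controlled uniformly in $x_0$. There are two such: the remainder $\reste(x_0,r,u,u_1,\dots,u_n)$ in the Jacobian expansion, handled by Proposition \ref{prop:jacobunif} (itself ultimately resting on Lemma \ref{lem:resteCoef}, where the crucial input is that the third and fourth derivatives of $f_{l,m}$ are continuous on the compact set $[0,1]\times M\times(\sph^{n-1})^{n+1}$, hence bounded); and the $o(r^{n+2})$ in the Bertrand--Diquet--Puiseux volume expansion, handled by Lemma \ref{lem:volunif} (ii). Both are already stated in the required uniform form in the excerpt, so the proof of Proposition \ref{prop:thmprincipalunif} amounts to assembling them. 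I would write the proof as: ``The proof is identical to that of Theorem \ref{THM} (i), upon observing that every estimate invoked there admits a version uniform in $x_0\in M$, namely Lemma \ref{lem:volunif} (ii), Proposition \ref{prop:jacobunif}, and assumption ($\mathrm{A}_1$); tracking the constants through Steps 1--3 then gives the stated uniform convergence.''
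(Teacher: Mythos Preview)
Your proposal is correct and follows exactly the paper's own approach: the paper's proof also consists in observing that the two estimates feeding the proof of Theorem \ref{THM} (i) --- the Jacobian expansion (via Proposition \ref{prop:jacobunif}) and the volume-of-balls expansion (via Lemma \ref{lem:volunif} (ii)) --- are available in a form uniform in $x_0$, and that this uniformity propagates through Steps 1--3 unchanged. If anything, your write-up is more detailed than the paper's, which dispatches the argument in three sentences.
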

\begin{proof}
Looking closely at the proof of Theorem \ref{THM}, we observe that the approximation of $\E[\Nl]$ comes from two consecutive estimates inside the integral at \eqref{eq:INouvCoor}: the expansion of the Jacobian provided by Theorem \ref{Jacob} and the expansion of the volume of small balls given at \eqref{vol}. As soon as these two estimates are showed to be uniform with respect to $x_0$, the uniformity of the expansion in \eqref{th1} follows. 

The uniformity of the Jacobian and volume approximations are guaranteed by Proposition \ref{prop:jacobunif} and Lemma \ref{lem:volunif} (ii) respectively. 
This completes to proof of Proposion \ref{prop:thmprincipalunif}.
\end{proof}
We can now proceed with the proof of \eqref{eq:GB}. 
Let us denote by $F$ (resp. $E$ and $V$), the total number of Voronoi cells in $S$ (resp. the total number of edges and vertices). Euler's relation applied to the Voronoi graph states that 
\begin{equation}\label{eq:euler}
\chi(S) = F-E +V.
\end{equation} 
Since the Voronoi tessellation is a normal tessellation, see \cite[p. 43]{Mol94}, each vertex is contained in three cells and each edge is contained in two cells, thus 
\begin{equation}\label{eq:normal}
2E=3V. 
\end{equation}
Inserting \eqref{eq:normal} into \eqref{eq:euler}, we obtain
\begin{equation}\label{eq:neweuler}
\chi(S) = F- \frac 12 V. 
\end{equation}
Taking now the expectation of \eqref{eq:neweuler}, we deduce that
\begin{equation}\label{eq:espeuler}
\chi(S) = \E[F] - \frac 12 \E[V]. 
\end{equation}
It remains to calculate both expectations in the right-hand side of \eqref{eq:espeuler}. In particular, the number $F$ of Voronoi cells is the number of points of $\Pl$ in $S$, hence 
\begin{equation}\label{eq:espF}
 \E[ F]= \E[ \#( \Pl \cap S)] = \lambda \vol^{(S)}(S).  
\end{equation}
Now, since each vertex is in three cells, we have
 \begin{equation}\label{eq:espV}
\E[V] = \frac 13 \E[ \sum_{x\in \Pl} N( \mathcal{C}^{(S)}(x,\Pl))], 
 \end{equation}
An application of the Mecke-Slivnyak formula in \eqref{eq:espV} provides the identity 
\begin{equation}\label{eq:MSV}
\E[V] = \frac{\lambda}3 \int_S \E[ N( \mathcal{C}^{(S)}(x, \Pl \cup \{ x\} ))] \dvol^{(S)}(x).
\end{equation}
Applying now Proposition \ref{prop:thmprincipalunif} to the compact surface $S$ and using the fact that $\Scal_{x}^{(S)}=2 K(x)$ , we get that
\begin{equation}\label{eq:thm2}
\sup_{x\in S}\lambda\left(\E[N(\mathcal{C}^{(S)}(x, \Pl \cup \{ x\} ))] - 6 +\frac{3K(x)}{\pi \lambda}\right)\underset{\lambda\to\infty}{\to} 0.
\end{equation}
Let $\varepsilon>0$. The convergence \eqref{eq:thm2} implies that there exists $\lambda_0$ such that for all $\lambda \ge \lambda_0$ and every $x\in S$,
\begin{equation}\label{eq:espNeps}
\bigg|\E[N(\mathcal{C}^{(S)}(x, \Pl \cup \{ x\} ))] - 6 +\frac{3K(x)}{\pi \lambda}\bigg|\le \frac{\varepsilon}{\lambda}.
\end{equation}
Combining \eqref{eq:espNeps} with \eqref{eq:MSV}, we get that
\begin{align}
\bigg| \E[V]-\frac{\lambda}3 \int_{S} \left(6 - \frac{3K(x)}{\lambda \pi}\right)\dvol^{(S)}(x)\bigg|& \le  \frac{\varepsilon}{3}\int_{S} \dvol^{(S)}(x)
\end{align}
which means that
\begin{align*} 
\bigg|\frac12 \E[V] - \lambda \vol^{(S)}(S) +\frac{1}{2\pi} \int_{S}K(x)\dvol^{(S)}(x)\bigg|&\le \frac{\mbox{vol}^{(S)}(S)}{6}\varepsilon.
\end{align*}
This together with \eqref{eq:espeuler} and \eqref{eq:espF}, implies that for every $\varepsilon>0$,
\begin{equation*} 
\bigg|\chi(S)- \frac{1}{2\pi} \int_{S} K(x) \dvol^{(S)}(x)\bigg|\le \frac{\mbox{vol}^{(S)}(S)}{6}\varepsilon.
\end{equation*}
which concludes the proof of \eqref{eq:GB}. 
\section{Appendix}
This section contains the proofs of Lemmas \ref{lem:jacobchtsph}, \ref{lem:volunif}, \ref{lem:borneinfvolboule}, \ref{lem:secondmoment}, \ref{lem:stabilization}.\\

\noindent{\it Proof of Lemma \ref{lem:jacobchtsph}.}
\noindent (i)  Let $\mathcal{V} = \{ u=u_1,u_2,\dots, u_{n} \}$ be an orthonormal basis of $T_{x_0}M$. By parallel transport of $\mathcal{V}$ along  $\gamma(s)= \exp_{x_0}(su)$, we get a basis $\mathcal{V}(s) = \{ u(s)=u_1(s), \dots, u_{n}(s) \}$ of $T_{\gamma(s)}M$. 
	By definition of $\gamma$, the derivative of $\exp_{x_0}(ru)$ with respect to $r$ is $u(r)$, parallel transport of $u$ along $\gamma$ so that the first column of the Jacobian matrix of $\varphi_{x_0}$ is the vector $(1,0,\dots,0)^{T}$. 
	 For $i=2,\dots,n$, applying Lemma \ref{lem:exodocarmo} with $v(s)= u+su_i$, it follows that the derivative of $\exp_{x_0}(tu)$ with respect to $u$ in the direction $u_i$ is 
    \begin{equation} \label{eq:deriveUi}\frac{\partial \exp_{x_0}(ru)}{\partial u_i} = J_i(r) \end{equation}
		where $J_i$ is the Jacobi field along $\gamma$ such that $J_i(0)=0$ et $J_i'(0)=u_i$.
		Such a Jacobi field is a normal vector field, that is $\langle u_1(s),J_i(s) \rangle_{\gamma(s)} =0$ for all $s$. It follows from \eqref{eq:deriveUi} that 
	the Jacobian determinant of $h$ is
		\begin{equation} \label{matriceJac}  \J_{x_0}^{(M)}(r,u) = \det \begin{pmatrix} 
		1& 0&\cdots & 0 \\
		0& ~&~&~ \\
		\vdots & ~ &A(r,u) &~\\
		0 & ~&~&~
		     \end{pmatrix} \end{equation}
				where $A(r,u)$ is a matrix of size $(n-1)\times (n-1)$ with entries $A_{i,j}(r,u) = \langle u_{i+1}(r), J_{j+1}(r)\rangle_{\gamma(r)}$. Even though the dependency is not visible, we emphasize here the fact that $A_{i,j}$ does depend on $u$ through the construction of the orthonormal basis ${\mathcal V}$, its parallel transported ${\mathcal V}(r)$  and the Jacobi field $J_{j+1}$. In order to get an asymptotic expansion of the Jacobian determinant, we first develop the coefficients of the matrix $A$ using Taylor's formula for fixed $u$. Let us determine the successive derivatives of $A_{i.j}$ with respect to $r$: the vector $u_i(r)$ being a parallel transport, its derivative with respect to $r$ is $0$ so that for every $m\ge 1$ and every couple $(u,r)$,
\begin{equation}\label{eq:partderivA}
  \frac{\partial^m A_{i,j}}{\partial r^m}(r,u)=\langle u_{i+1}(r), J_{j+1}^{(m)}(r) \rangle_{x_0}.
\end{equation}
where $J_{j+1}^{(m)}$ is the $m$-th covariant derivative of $J_{j+1}$ with respect to $\gamma'$.
In particular, \eqref{eq:partderivA} applied to $m=0$ and $m=1$ in the particular case $r=0$ provides
		\begin{align}
		A_{i,j}(0,u)=\langle u_{i+1}, J_{j+1}(0) \rangle_{x_0}&=0 ,\label{eq:derive0}
\end{align}
and
\begin{align}
		\frac{\partial A_{i,j}}{\partial r}(0,u)=\langle u_{i+1}, J_{j+1}'(0) \rangle_{x_0} &= \delta_{i,j}.\label{eq:derive1}
		\end{align}
	Moreover, since $J$ satisfies the Jacobi equation \eqref{eqJacobi}, we get
		\begin{equation} \label{eq:derive2} \frac{\partial^2 A_{i,j}}{\partial r^2}(0,u)=\langle u_{i+1}, J_{j+1}''(0) \rangle_{x_0} = - \langle u_{i+1}, \mathcal{R}_{x_0}^{(M)}(J_{j+1}(0),u)u \rangle_{x_0}= 0. \end{equation}
The calculation of the third derivative of $J_{j+1}$ is more delicate but thanks to the identity $J_{j+1}(0)=0$ and to the trick explained in \cite[p.115]{Do92}, we can show that $J_{j+1}'''(0) =-\mathcal{R}_{x_0}^{(M)}(J'_{j+1}(0),u)u$. Inserting that result into \eqref{eq:partderivA} with $m=3$, we obtain
		 \begin{equation}\label{eq:derive3}
\frac{\partial^3 A_{i,j}}{\partial r^3}(0,u)=
- \langle u_{i+1}, \mathcal{R}_{{x_0}}^{(M)}(J'_{j+1}(0),u)u \rangle_{x_0} = - \langle u_{i+1}, \mathcal{R}_{{x_0}}^{(M)}(u_{j+1},u)u \rangle_{x_0}. \end{equation}
	Observe that in the particular case $i=j$, the definition of the sectional curvature given at \eqref{eq:defseccurv} implies that 
        \begin{equation}
          \label{eq:3rdderivi=j}
          \frac{\partial^3 A_{i,i}}{\partial r^3}(0,u)=-K_{x_0}^{(M)}(u,u_{i+1}).
        \end{equation}
We may now insert  \eqref{eq:derive0}, \eqref{eq:derive1}, \eqref{eq:derive2}, \eqref{eq:derive3} and \eqref{eq:3rdderivi=j} into Taylor's formula applied to the function $A_{i,j}$ as a function of $r$ for fixed $u$, at the second order for $i\ne j$ and at the third order for $i=j$. We obtain the two expansions
		\begin{align}
		A_{i,j}(r,u) &= \reste_{i,j}(r,u) r^2, i\neq j, \label{AnonDiag}\\
		A_{i,i}(r,u) &= r - \frac{ K^{(M)}_{x_0}(u,u_{i+1})}6 r^3 +\reste_{i,i}(r,u)r^3, \label{Adiag}
		\end{align}
		
	where $\reste_{i,j}(r,u)$ tends to $0$ when $r$ tends to $0$ for fixed $u$. 

Next, we prove that the terms $\reste_{i,j}(r,u)$ tend to $0$ independently of $u$. To do so, we aim at applying Taylor's inequality, which only requires to show uniform bounds on the third and fourth derivatives with respect to $r$ of $A_{i,j}(r,u)$, $i\ne j$, and $A_{i,i}(r,u)$ respectively. Using both the Jacobi equation \eqref{eqJacobi} and the fact that $u_{i+1}'(r) =u_{j+1}'(r) = u'(r)=0$, we get
	\begin{equation*}
	   \frac{\partial^2A_{i,j}}{\partial r^2}(r,u) =\langle u_{i+1}(r), \mathcal{R}_{\gamma(r)}^{(M)}(J_{j+1}(r),u(r))u(r) \rangle_{\gamma(r)}. 
	\end{equation*}
This implies that
	\begin{align*}
	  \frac{\partial^3A_{i,j}}{\partial r^3}(r,u) &=\langle u_{i+1}(r), \frac{\partial}{\partial r} \mathcal{R}_{\gamma(r)}^{(M)}(J_{j+1}(r),u(r))u(r) \rangle_{\gamma(r)},
\end{align*}
and
\begin{align*}
	  \frac{\partial^4A_{i,j}}{\partial r^4}(r,u) &=\langle u_{i+1}(r), \frac{\partial^2}{\partial r^2} \mathcal{R}_{\gamma(r)}^{(M)}(J_{i+1}(r),u(r))u(r) \rangle_{\gamma(r)}.
	\end{align*}

Since the curvature tensor $\mathcal{R}_{x}^{(M)}$ is a ${\mathcal C}^{\infty}$ function of $x\in M$ and of its two entries, both functions $\displaystyle\frac{\partial^3A_{i,j}}{\partial r^3}$ and $\displaystyle\frac{\partial^4A_{i,j}}{\partial r^4}$ are continuous on the compact set
$[0,R_{\mbox{\tiny inj}}]\times \sph^{n-1}$, where we use a slight abuse of notation by denoting by $\sph^{n-1}$ the unit sphere of $T_{x_0}M$. Consequently, there exists positive constants $c_{i,j}$ such that  
	 	\begin{align}
	  \bigg|\frac{\partial^3A_{i,j}}{\partial r^3}(r,u)\bigg| &\le c_{i,j}, i\neq j \label{eq:unifd3}\\
	  \bigg|\frac{\partial^4A_{i,j}}{\partial r^4}(r,u)\bigg| &\le c_{i,i}. \label{eq:unifd4}
	\end{align}
Inserting  \eqref{eq:unifd3} and \eqref{eq:unifd4} into Taylor's inequality applied at the third order for $i\ne j$ and at the fourth order for $i=j$, we obtain that there exists a positive constant $c>0$ such that for every $(r,u)$ with $r\le R_{\mbox{\tiny{inj}}}$, 
\begin{align*}
|\reste_{i,j}(r,u)| r^2 &\le  \sup_{(r,u)\in [0,R_{\mbox{\tiny{inj}}}]\times \sph^{n-1}} \bigg|\frac{\partial^3A_{i,j}}{\partial r^3}(r,u)\bigg| \frac{r^3}{3}\le c r^3,\quad 
 \mbox{ for every $i\neq j$,}
\end{align*}
and
\begin{align*}
|\reste_{i,i}(r,u)| r^3 &\le  \sup_{(r,u)\in [0,R_{\mbox{\tiny{inj}}}]\times \sph^{n-1}} \bigg|\frac{\partial^4A_{i,j}}{\partial r^4}(r,u)\bigg| \frac{r^4}{24}\le c r^4 \mbox{ for every $i$.}
\end{align*} 
	It then follows that there exists a positive constant not depending on $u$ such that for every $i,j$ and $(r,u)$ with $r\le R_{\mbox{\tiny{inj}}}$,
	\begin{equation}\label{eq:borneunifreste}
	|\reste_{i,j}(r,u)| \le cr.
	\end{equation}
			We now expand the determinant of the matrix $A(r,u)$. Let us denote by $\mathfrak{S}_{n-1}$ the group of permutation of $\{2,\dots, n\}$, by $\mathrm{Id}$ the identity permutation of $\mathfrak{S}_{n-1}$ and  by $\mathrm{sgn}(\sigma)$ the signature of the permutation $\sigma$. Leibniz formula for the determinant of matrices states that
\begin{align} \det (A(r,u)) &= \sum_{\sigma \in \mathfrak{S}_{n-1}} \mathrm{sgn}(\sigma) \prod_{j=1}^{n-1} A_{\sigma(j),j}(r,u)\notag \\
                &=  \prod_{j=1}^{n-1}  \langle u_{j+1}(r), J_{j+1}(r)\rangle_{x} +  \sum_{\sigma \in \mathfrak{S}_{n-1} \backslash \{ \mathrm{Id} \}} \mathrm{sgn}(\sigma) \prod_{j=1}^{n-1}  \langle u_{\sigma(j+1)}(r), J_{j+1}(r)\rangle_{x}. \label{detA}
								\end{align} 
It is expected that the first term in \eqref{detA} is dominant in the expansion of $\det(A(r,u))$. Let us expand this particular term: using \eqref{Adiag}, the fact that $\mathcal{V}$ is an orthonormal basis and the definition of the Ricci curvature at \eqref{eq:defric}, we get
 \begin{align} 
 \prod_{j=1}^{n-1}  \langle u_{j+1}(r), J_{j+1}(r)\rangle_{x} & =  \prod_{j=1}^{n-1} \left(r - \frac{ K^{(M)}_{x_0}(u,u_{j+1})}6 r^3 +\reste_{j,j}(r,u)r^3 \right) \notag\\
     &= r^{n-1} - \frac{\Ric^{(M)}_{x_0}(u)}6 r^{n+1} + \reste_1(r,u)r^{n+1}, \label{MainDet}
\end{align}
where 
$\reste_1(r,u)$ tends to 0 when $r$ tends to 0. Moreover, this convergence is independent of $u$ since $\reste_1(r,u)$ is simply a linear combination of remainders $\reste_{i,i}$ and of terms of the form \eqref{Adiag}, for all $i=2,\dots, n$.   
	It remains to show that the second term in \eqref{detA} satisfies 
	\begin{equation}\label{negligDet} \lim_{r\to 0}\frac{1}{r^{n+1}} \sum_{\sigma \in \mathfrak{S}_{n-1} \backslash \{ \mathrm{Id} \}} \mathrm{sgn}(\sigma) \prod_{j=1}^{n-1}  \langle u_{\sigma({j+1})}(r), J_{j+1}(r)\rangle_{x} = 0 \end{equation}
and that this convergence is uniform with respect to $u$.

Indeed, let $\sigma \in\mathfrak{S}_{n-1} \backslash \{ \mathrm{Id} \}$. There are at least two indices $j$ such that $\sigma(j)\neq j$. Without loss of generality, we assume that $\sigma(2) \neq 2$ and $\sigma(3)\neq 3$. Then, 
\begin{align}
\prod_{j=1}^{n-1} | \langle u_{\sigma({j+1})}(r), J_{j+1}(r)\rangle_{x}| &= | \langle u_{\sigma({2})}(r), J_{2}(r)\rangle_{x}| |\langle u_{\sigma({3})}(r), J_{3}(r)\rangle_{x}|
\prod_{\substack{j=1\\j\neq 1,2}}^{n-1}  |\langle u_{\sigma({j+1})}(r), J_{j+1}(r)\rangle_{x}| \notag \nonumber\\
 & \le r^4 |\reste_{\sigma({2}),2}(r,u)\reste_{\sigma({3}),3}(r,u)|\prod_{\substack{j=1\\j\neq 1,2}}^{n-1}  |\langle u_{\sigma({j+1})}(r), J_{j+1}(r)\rangle_{x}|. \label{inegProd}
\end{align}

Let $j \neq 1,2$. If $\sigma(j+1)=j+1$ then by \eqref{Adiag}, for every $u$ and $r\le R_{\mbox{\tiny{inj}}}$
\begin{align}\label{eq:majorantentreematrice}
|\langle u_{\sigma({j+1})}(r), J_{j+1}(r)\rangle_{x}|&= \left|r - \frac{ K^{(M)}_{x_0}(u,u_{j+1})}6 r^3 +\reste_{j,j}(r,u)r^3 \right|\nonumber\\
														&\le r\left| 1 -\frac{ K^{(M)}_{x_0}(u,u_{j+1})}6 r^2 +\reste_{j,j}(r,u)r^2 \right|\nonumber\\
														&\le cr
\end{align}
for some positive constant $c$, independent of $u$, thanks to Assumption $\mbox{(}{\mbox{A}}_1\mbox{)}$ and \eqref{eq:borneunifreste}.

If
$\sigma(j+1)\neq j+1$, we deduce from \eqref{AnonDiag} and \eqref{eq:borneunifreste} that \eqref{eq:majorantentreematrice} still holds.

Inserting \eqref{eq:majorantentreematrice} into \eqref{inegProd}, we obtain that for every $r \le R_{\mbox{\tiny{inj}}}$ and every $u$, there exists a positive constant $c$ such that
\begin{equation*}
\prod_{j=1}^{n-1} | \langle u_{\sigma({j+1})}(r), J_{j+1}(r)\rangle_{x}| \le c|\reste_{\sigma({2}),2}(r)\reste_{\sigma({3}),3}(r)|r^{n+1}.
\end{equation*}
Thus we get \eqref{negligDet} which, combined with \eqref{MainDet} and \eqref{detA} provides the required expansion 
\[ \J_{x_0}^{(M)}(r,u)=\det (A(r,u)) = r^{n-1} - \frac{\Ric^{(M)}_{x_0}(u)}6 r^{n+1} + \reste_2(r)r^{n+1} \]
where $\reste_2(r)$ goes to $0$ independently of $u$ when $r$ tends to $0$.\\~\\ 
\noindent{(ii)} Only a slight modification of (i) is needed here. The dependency with respect to $x_0$ of both the Jacobian $\J_{x_0}^{(M)}$ and its coefficients has been overlooked in the proof of (i) and is now relevant. In particular, both functions $\frac{\partial^3A_{i,j}}{\partial r^3}$ and $\frac{\partial^4A_{i,j}}{\partial r^4}$ are continuous on the compact set
$M\times[0,R_{\mbox{\tiny inj}}]\times \sph^{n-1}$, which implies that the bounds from \eqref{eq:unifd3} and \eqref{eq:unifd4} are uniform with respect to both $x_0$ and $u$. The rest of the proof is then identical line by line to the proof of (i).\\~\\
\noindent{(iii)}	Again, the method goes along similar lines as the proof of (i). 
 From now on, we carefully discuss the uniformity of each estimate with respect to $x_0\in M$. 

Thanks to \eqref{eq:partderivA} and to the Jacobi equation given at \eqref{eqJacobi}, we get for every couple $(u,r)$ and $x_0\in M$
\begin{equation*}
  \label{eq:deriveesecondecoeff}
 \frac{\partial^2 A_{i,j}}{\partial r^2}(r,u)=\langle u_{i+1}(r), J_{j+1}''(r) \rangle_{x_0} = - \langle u_{i+1}(r), \mathcal{R}_{\gamma(r)}^{(M)}(J_{j+1}(r),u(r))u(r) \rangle_{\gamma(r)}.
\end{equation*}
Consequently, when $J_{j+1}(r)\ne 0$, we obtain that
\begin{equation}
  \label{eq:deriveesecondecoeffmajorant}
\bigg| 
\frac{\partial^2 A_{i,j}}{\partial r^2}(r,u)\bigg| \le 
\|J_{j+1}(r)\|_{\gamma(r)}\bigg|\langle u_{i+1}(r), \mathcal{R}_{\gamma(r)}^{(M)}\left(\frac{J_{j+1}(r)}{ \|J_{j+1}(r)\|_{\gamma(r)}},u(r)\right)u(r) \rangle_{\gamma(r)}\bigg|.  
\end{equation}
In particular, Assumption \mbox{($\mbox{A}_1$)} implies that the second term in the right hand side of \eqref{eq:deriveesecondecoeffmajorant} is uniformly bounded with respect to $(r,u)$ and $x_0\in M$. Indeed, any scalar product of the form $\langle v_1, \mathcal{R}_{x}^{(M)}\left(v_2,v_3\right)v_4 \rangle_{x}$, where $x\in M$ and $v_1,\dots,v_4$ are unit vectors in $T_xM$, can be written as a linear combination of sectional curvatures, see e.g. \cite[Formula 1.10 p. 16]{Che08}. Moreover, since Assumption \mbox{($\mbox{A}_1$)} is satisfied, we can also apply Rauch's theorem, see e.g. \cite[Theorem 2.3]{Do92}, which shows that the first term $\|J_{j+1}(r)\|_{\gamma(r)}$ is bounded from above by the norm of a Jacobi field in a Riemannian manifold with constant curvature. Such a Jacobi field is known explicitly, see for instance \cite[Example 2.3]{Do92}. It does not depend on $x_0$ or $u$ and its norm is bounded by a fixed constant as soon as $r\le r_1$ for some fixed $r_1>0$. Consequently, we deduce from \eqref{eq:deriveesecondecoeffmajorant} that there exist $r_1>0$ and a positive constant $c>0$ such that for every $x_0\in M$, $u\in T_{x_0}M$ with $\|u\|_{x_0}=1$ and $r\in [0,r_1]$,
\begin{equation}
  \label{eq:deriveesecondecoeffmajorant2}
\bigg| \frac{\partial^2 A_{i,j}}{\partial r^2}(r,u)\bigg| \le c.
\end{equation}
Inserting \eqref{eq:derive0}, \eqref{eq:derive1} and \eqref{eq:deriveesecondecoeffmajorant2} into Taylor's inequality at the second order applied to  $A_{i,j}(r,u)$ seen as a function of $r$ for fixed $u$, we obtain that there exists a positive constant $c$ such that for every $x_0\in M$, $r\le r_1$ and $u$, 
\begin{equation}
  \label{eq:coeffmajorant3}
|A_{i,j}(r,u)-\delta_{i,j} r|\le c r^2.
\end{equation}
We proceed now as in the proof of (i), that is we use the rewriting of $\det(A(r,u))$ given at \eqref{detA}, then replace each coefficient by its estimate obtained at \eqref{eq:coeffmajorant3}. The term in the sum obtained for $\sigma=\mbox{Id}$ is equal to $r^{n-1}$ up to $c r^{n}$ for some positive constant $c$ and the remaining sum for $\sigma\ne \mbox{Id}$ is equal to zero up to $c r^{n+2}$. Consequently, for $r$ small enough, we get
$$ |\J_{x_0}^{(M)}(r,u)-r^{n-1}|\le c r^{n}$$
for every $x_0$ and $u$. This implies the required result when choosing $r_1\le \frac{1}{2c}$.
\begin{flushright}
$\Box$
\end{flushright}							
\noindent{\it Proof of Lemma \ref{lem:volunif}.} (i) Let $\varepsilon>0$. We fix $r_1$ small enough such that the ball ${\mathcal B}^{(M)}(x_0,r_1)$ is a compact set. We then apply point (ii) of Lemma \ref{lem:jacobchtsph} to the manifold ${\mathcal B}^{(M)}(x_0,r_1)$ and take $r_0<r_1$ such that for $r\le r_0$,
\begin{align}\label{eq:jacobunifestime}
\sup_{x\in {\mathcal B}^{(M)}(x_0,r_1)}\sup_{u\in T_{x}M, \|u\|_{x}=1}r^{-(n+1)}|\J_{x}^{(M)}(r,u) - r^{n-1} + \frac{ \Ric^{(M)}_{x}(u)}{6}r^{n+1}|&\le \varepsilon.
\end{align}
Because of point (iii) of Lemma \ref{lem:jacobchtsph}, we may assume that $\J_{x}^{(M)}(r,u)$ is non-negative for every $x\in {\mathcal B}^{(M)}(x_0,r)$, $r\le r_0$ and $u\in T_{x_0}M$ with $\|u\|_{x_0}=1$. In particular, the rewriting of the volume element given at \eqref{eq:Jru} implies that
\begin{equation}
  \label{eq:volboulesimple}
\vol^{(M)}(\bo(x,r))=\int_{t=0}^{r}\int_{u\in {\mathcal S}^{n-1}}\J_{x}^{(M)}(t,u)\mathrm{d}t\mathrm{d}{\mbox{vol}}^{({\mathcal S}^{n-1})}(u).
\end{equation}
Inserting \eqref{eq:jacobunifestime} into \eqref{eq:volboulesimple} and integrating over $t$ and $u$, we get
\begin{equation}
  \label{eq:presquevolunif}
r^{-(n+2)}\left(\vol^{(M)}(\bo^{(M)}(x,r))-\kappa_n r^n + \frac{1}{6(n+2)} \int \Ric^{(M)}_{x}(u)\mathrm{d}{\mbox{vol}}^{({\mathcal S}^{n-1})}(u)\cdot r^{n+2}\right)\le \frac{\varepsilon}{2(n+2)}.
\end{equation}
Recalling \eqref{eq:scalRicci}, we may replace the integral in \eqref{eq:presquevolunif} by $\kappa_n \Scal^{(M)}_x$. 
Even if it means taking $r_0$ smaller, we may then assume by continuity of the function $x\mapsto \Scal_x^{(M)}$ at $x_0$ that for $x\in {\mathcal B}^{(M)}(x_0,r_0)$, we have 
\begin{equation}
  \label{eq:contscalaire}
|\Scal_x^{(M)}-\Scal_{x_0}^{(M)}|\le \varepsilon.  
\end{equation}
Combining now \eqref{eq:presquevolunif}, \eqref{eq:scalRicci} and \eqref{eq:contscalaire} provides the required result.\\~\\
(ii) The proof is almost identical to the proof of (i), save for the fact that we apply now point (ii) of Lemma \ref{lem:volunif} to the whole manifold $M$ and that we use the uniform continuity of the function $x\mapsto \Scal(x)$ on the compact set $M$ instead of the standard continuity.


 \begin{flushright}
$\Box$
\end{flushright}								
\noindent{\it Proof of Lemma \ref{lem:borneinfvolboule}.}
Let $x_0\in M$ and $r>0$. Since $M$ is non-compact and complete, there exists a unit speed geodesic $\gamma$ of length $r$ which emanates from $x_0$ and such that $d^{(M)}(\gamma(0),\gamma(r))=r$. Indeed, if not, the closure of the ball ${\mathcal B}^{(M)}(x_0,r)$ would be a closed and bounded subset equal to $M$ so a compact set by Hopf-Rinow theorem \cite[Theorem 52]{Ber03}. 

Let us fix $\varepsilon\in (0,r/2)$ and let us choose a set $\{x_1,\dots,x_N\}$ of $N=\lfloor \frac{r}{4\varepsilon}\rfloor$ points on the first half of $\gamma$ such that $d^{(M)}(x_{i+1},x_i)=2\varepsilon$. The union $\cup_{i=1}^N {\mathcal B}^{(M)}(x_i,\varepsilon)$ is constituted of $N$ balls which are all included in ${\mathcal B}^{(M)}(x_0,r)$ because of the triangular inequality. Moreover, these balls are disjoint: indeed, if not, let $y\in {\mathcal B}^{(M)}(x_i,\varepsilon)\cap {\mathcal B}^{(M)}(x_j,\varepsilon)$ with $j>i$. In particular, $$d^{(M)}(x_0,x_j)\le d^{(M)}(x_0,x_i)+d^{(M)}(x_i,y)+d^{(M)}(y,x_i)<d^{(M)}(x_0,x_i)+2\varepsilon$$ which contradicts the fact that the distance measured along $\gamma$ is the actual distance $d^{(M)}$.

Consequently, we get that
\begin{equation}
  \label{eq:guirlande}
\mbox{vol}^{(M)}({\mathcal B}^{(M)}(x_0,r))\ge \sum_{i=1}^N \mbox{vol}^{(M)}({\mathcal B}^{(M)}(x_i,\varepsilon)).   
\end{equation}
Now let us choose $\varepsilon$ smaller than the $r_0$ provided by Lemma \ref{lem:jacobchtsph} (iii). Applying the change of variables given at \eqref{eq:Jru}, we obtain for every $1\le i\ne N$,
\begin{equation}
  \label{eq:boulesguirlande}
\mbox{vol}^{(M)}({\mathcal B}^{(M)}(x_i,\varepsilon))=\int_{{\mathcal S}^{n-1}}\int_0^{\varepsilon}|\J_{x_0}^{(M)}(r,u)| \dd r \dvol^{(\sph^{n-1})}(u)\ge \frac{1}{2}\sigma_{n-1}\int_0^{\varepsilon}r^{n-1}\mathrm{d}r=\frac{1}{2n}\sigma_{n-1}\varepsilon^n.  
\end{equation}
We conclude by considering two cases:\\
\- either $r>r_0$, in which case we insert \eqref{eq:boulesguirlande} into \eqref{eq:guirlande} for $\varepsilon =r_0/4$ and we use the fact that $N=\lfloor \frac{r}{4\varepsilon}\rfloor$ to get a linear lower bound for $\mbox{vol}^{(M)}({\mathcal B}^{(M)}(x_0,r))$,\\
\- or $r\le r_0$, in which case we apply \eqref{eq:boulesguirlande} to the ball  ${\mathcal B}^{(M)}(x_0,r)$ and we get a lower bound proportional to $r^n$.\\
This completes the proof of Lemma \ref{lem:borneinfvolboule}.
\begin{flushright}
$\Box$
\end{flushright}
\noindent{\it Proof of Lemma \ref{lem:secondmoment}}.
We start by proving \eqref{eq:momentsfinisvol}. It is enough to show that $\E[\vol^{(M)}(\Cl)]$ is bounded as the same result for $\E[\vol^{(M)}({\widetilde{\mathcal C}}^{(M)}_{x_0,\lambda})^2]$ follows along similar lines.
By Fubini's theorem, we can rewrite the second moment of $\vol^{(M)}(\Cl)$ in the following way:
\begin{align}
&\E[\vol^{(M)}(\Cl)^2]\\&=\iint \proba[x_1,x_2\in \Cl]\mathrm{d}\mbox{vol}^{(M)}(x_1)\mathrm{d}\mbox{vol}^{(M)}(x_2)\nonumber\\
&=\iint \proba[\Pl\cap ({\mathcal B}^{(M)}(x_1,d^{(M)}(x_0,x_1))\cup{\mathcal B}^{(M)}(x_2,d^{(M)}(x_0,x_2))=\emptyset]\dM(x_1)\dM(x_2)\nonumber\\  
&=\iint e^{-\lambda \vol^{(M)}({\mathcal B}^{(M)}(x_1,d^{(M)}(x_0,x_1))\cup{\mathcal B}^{(M)}(x_2,d^{(M)}(x_0,x_2))}\dM(x_1)\dM(x_2).\label{eq:volcarreinterm}
\end{align}
Let $r_0$ be given by Lemma \ref{lem:jacobchtsph} (iii). 
Thanks to Lemma \ref{lem:borneinfvolboule}, we get that there exists a positive constant $c$ such that for every $x_1,x_2\in M\setminus {\mathcal B}^{(M)}(x_0,r_0)$
\begin{equation}
  \label{eq:volunion}
\vol^{(M)}({\mathcal B}^{(M)}(x_1,d^{(M)}(x_0,x_1))\cup{\mathcal B}^{(M)}(x_2,d^{(M)}(x_0,x_2))\ge \frac{c}{2}(d^{(M)}(x_0,x_1)+d^{(M)}(x_0,x_2)).
\end{equation}
and for every $x_1,x_2\in  {\mathcal B}^{(M)}(x_0,r_0)$,
\begin{equation}
  \label{eq:volunion2}
\vol^{(M)}({\mathcal B}^{(M)}(x_1,d^{(M)}(x_0,x_1))\cup{\mathcal B}^{(M)}(x_2,d^{(M)}(x_0,x_2))\ge \frac{c}{2}(d^{(M)}(x_0,x_1)^n+d^{(M)}(x_0,x_2)^n).
\end{equation}
Inserting \eqref{eq:volunion} and \eqref{eq:volunion2} into \eqref{eq:volcarreinterm} and using again Fubini's theorem, we obtain that
\begin{align}
\sqrt{\E[\vol^{(M)}(\Cl)^2]}&\le \int_{{\mathcal B}^{(M)}(x_0,r_0)}e^{-\lambda \frac{c}{2}d^{(M)}(x_0,x_1)^n}\dM(x_1)+\int_{ M\setminus {\mathcal B}^{(M)}(x_0,r_0)} e^{-\lambda \frac{c}{2}d^{(M)}(x_0,x_1)}\dM(x_1)\label{eq:intermvolborne}.
\end{align}
We bound separately each of the two terms from the right-hand side of \eqref{eq:intermvolborne}. The first term is treated thanks to Lemma \eqref{lem:jacobchtsph} (iii):
\begin{align}
\int_{{\mathcal B}^{(M)}(x_0,r_0)}e^{-\lambda \frac{c}{2}d^{(M)}(x_0,x_1)^n}\dM(x_1)&=\int_{{\mathcal S}^{n-1}}\int_0^{r_0} e^{-\lambda \frac{c}{2} r^n}|\J_{x_0}^{(M)}(r,u)|\mathrm{d}r\mathrm{d}\mbox{vol}^{{\mathcal S}^{n-1}}(u)\nonumber\\
&\le \frac{3}{2}\sigma_{n-1}\int_0^{r_0} e^{-\lambda \frac{c}{2} r^n}r^{n-1}\mathrm{d}r\nonumber\\
&\le \frac{3\sigma_{n-1}}{cn}\frac{1}{\lambda}\label{eq:1ertermevol}.   
\end{align}
We turn now to the second term which can be decomposed in the following way.
\begin{align}
\int e^{-\lambda \frac{c}{2}d^{(M)}(x_0,x_1)}\dM(x_1)& \le \sum_{l=0}^{\infty}\vol^{(M)}(\{x_1\in M: l\le d^{(M)}(x_0,x_1)\le l+1\}) e^{-\lambda \frac{c}{2} l}\nonumber\\
& \le \sum_{l=0}^{\infty}\vol^{(M)}({\mathcal B}^{(M)}(x_0,l+1)) e^{-\lambda \frac{c}{2} l}.\label{eq:serieexpo}
\end{align}
Because of the assumption ($\mbox{A}_{1}$), the manifold $M$ has a Ricci curvature bounded from below by some constant $(n-1)\delta$. Without loss of generality, we can assume that $\delta<0$. Thanks to the Bishop-Gromov theorem, see e.g. \cite[Theorem 107]{Ber03}, this implies that the volume $\vol^{(M)}({\mathcal B}^{(M)}(x_0,l+1))$ is bounded from above by the volume of a ball with same radius in a manifold of constant curvature $\delta$. In other words, this means that for some constants $c,c'>0$, we get
\begin{equation}
  \label{eq:volcourbureminoree}
\vol^{(M)}(B^{(M)}(x_0,l+1))\le \vol^{({\mathcal H}_{\delta}^n)}(B^{(M)}(x_0,l+1))\le c\int_0^{l+1}\sinh^{n-1}(t)\mathrm{d}t\le c'e^{c'l}.  
\end{equation}
Inserting \eqref{eq:volcourbureminoree} into \eqref{eq:serieexpo}, we deduce that for $\lambda$ large enough, there exists a positive constant $c$ such that
\begin{equation}
  \label{eq:2emetermevol}
\int e^{-\lambda \frac{c}{2}d^{(M)}(x_0,x_1)}\dM(x_1)\le  \frac{c}{\lambda}. 
\end{equation}
Combining \eqref{eq:intermvolborne}, \eqref{eq:1ertermevol} and \eqref{eq:2emetermevol}, we deduce the required result \eqref{eq:momentsfinisvol}.

We turn now to \eqref{eq:momentsfinisN} and prove that $\E[N(\Cl)^2]$ is bounded. We first rewrite $\Nl$ in a convenient way: each vertex of the Voronoi cell  ${\mathcal C}^{(M)}(x_0,{\mathcal P}_{\lambda}\cup\{x_0\})$ belongs to that cell and to exactly $n$ other Voronoi cells. In other words, it is the center of an open geodesic ball which does not meet ${\mathcal P}_{\lambda}$ and contains $(n+1)$ distinct points $x_0,\dots,x_n$ of ${\mathcal P}_{\lambda}$ on its boundary. 
Actually, the circumscribed ball of $(n+1)$ fixed points needs not to be unique but Assumption \mbox{($\mbox{A}_3$)} garantees that its number $n(x_0,\dots,x_n)$ is bounded by $c>0$. If ${\mathcal B}^{(M)}_1(x_0,\dots,x_n),\dots,{\mathcal B}^{(M)}_n(x_0,\dots,x_n)$ are these balls, then
$$\Nl=\sum_{x_1,\dots,x_n\in {\mathcal P}_{\lambda}}\sum_{i=1}^{n(x_0,\dots,x_n)}{\bf 1}_{\{{\mathcal B}^{(M)}_i(x_0,\dots,x_n)\cap {\mathcal P}_{\lambda}=\emptyset\}}$$
and
$$\Nl^2=\sum_{x_1,\dots,x_n\in {\mathcal P}_{\lambda}}\sum_{x_{n+1},\dots,x_{2n}\in {\mathcal P}_{\lambda}}\sum_{i=1}^{n(x_0,\dots,x_n)}\sum_{j=1}^{n(x_0,x_{n+1},\dots,x_{2n})}{\bf 1}_{\{[{\mathcal B}^{(M)}_i(x_0,\dots,x_n)\cup {\mathcal B}^{(M)}_j(x_0,x_{n+1},\dots,x_{2n})]\cap {\mathcal P}_{\lambda}=\emptyset\}}.$$
When estimating $\E(\Nl^2)$, we observe that each term for fixed $i$ and $j$ will provide the same upper bound and that the number of such terms will be bounded by $c^2$. Consequently, for sake of simplicity, we may as of now assume that circumscribed ball of $(n +1) $ fixed points is unique and is denoted by ${\mathcal B}^{(M)}(x_0,\dots,x_n)$.

Using the Mecke-Slivnyak formula \cite[Proposition 4.1.1]{Mol94}, we get
\begin{align}
&\E(\Nl^2)\nonumber\\&= \E\left(\sum_{x_1,\dots,x_n\in {\mathcal P}_{\lambda}}\sum_{x_{n+1},\dots,x_{2n}\in {\mathcal P}_{\lambda}}{\bf 1}_{\{[{\mathcal B}^{(M)}(x_0,\dots,x_n)\cup {\mathcal B}^{(M)}(x_0,x_{n+1},\dots,x_{2n})]\cap {\mathcal P}_{\lambda}=\emptyset\}}\right)\nonumber\\
& = \sum_{k=0}^n\frac{\lambda^{n+k}}{(n+k)!}\binom{n}{k}\int\proba(\{[{\mathcal B}^{(M)}(x_0,\dots,x_n)\cup {\mathcal B}^{(M)}(x_0,x_1,\dots,x_{n-k},x_{2n-k+1},\dots,x_{2n})]\cap {\mathcal P}_{\lambda}=\emptyset)\nonumber\\
&\hspace*{5cm}\dM (x_1)\dots\dM (x_{n-k})\dM (x_{2n-k+1})\dots\dM (x_{2n})\nonumber\\
& = \sum_{k=0}^n\frac{1}{(n+k)!}\binom{n}{k}I_k(\lambda)\label{eq:defIk}
\end{align}
where \begin{align}
  \label{eq:Ik}
I_k(\lambda)&=\lambda^{n+k}\int e^{-\lambda \vol^{(M)}({\mathcal B}(x_0,\dots,x_n)\cup {\mathcal B}(x_0,x_1,\dots,x_{n-k},x_{2n-k+1},\dots,x_{2n}))}\nonumber\\&\hspace*{4cm}\dM (x_1)\dots\dM (x_{n})\dM (x_{2n-k+1})\dots\dM (x_{2n}).  
\end{align}
Let us treat separately each integral $I_k(\lambda)$. Thanks again to Lemma \ref{lem:borneinfvolboule}, for some constant $c>0$, we have
\begin{align}
  \label{eq:minvoldecomp}
&\vol^{(M)}(\bo^{(M)}(x_0,\dots,x_n)\cup \bo^{(M)}(x_0,x_1,\dots,x_{n-k},x_{2n-k+1},\dots,x_{2n}))\nonumber\\
&\hspace*{1cm}\ge \frac{c}{n+k} \sum_{i\in \{1,\dots,n,2n-k+1,\dots,2n\}}({d}^{(M)}(x_0,x_i)^n{\bf 1}_{\{x_i\in {\mathcal B}^{(M)}(x_0,r_0)\}}+{d}^{(M)}(x_0,x_i){\bf 1}_{\{x_i\not\in {\mathcal B}^{(M)}(x_0,r_0)\}}).  
\end{align}
Inserting \eqref{eq:minvoldecomp} into \eqref{eq:Ik} then using Fubini's theorem and a decomposition similar to \eqref{eq:serieexpo}, we obtain that
\begin{align}
I_k(\lambda)^{\frac{1}{n+k}} &\le  \lambda \left(\int_{{\mathcal B}^{(M)}(x_0,r_0)}e^{-\lambda \frac{c}{n+k} \mathrm{d}^{(M)}(x_0,x_1)^n}\dM (x_1)+\int e^{-\lambda \frac{c}{n+k} \mathrm{d}^{(M)}(x_0,x_1)}\dM (x_1)\right).\nonumber
\end{align}
We treat each of the two terms in the exact same way as for \eqref{eq:intermvolborne} and conclude that $I_k(\lambda)$ is bounded from above by a constant. Using finally \eqref{eq:defIk}, we get the required result \eqref{eq:momentsfinisN}.
\begin{flushright}
$\Box$
\end{flushright}
\noindent{\it Proof of Lemma \ref{lem:stabilization}}.
Let us define 
the Voronoi flower ${\mathcal F}_{x_0,\lambda}^{(M)}$ associated with the cell $\Cl$ as 
$${\mathcal F}_{x_0,\lambda}^{(M)}=\cup_{y\in \Cl}{\mathcal B}^{(M)}(y,d^{(M)}(y,x_0)).$$
In particular, the set ${\mathcal F}_{x_0,\lambda}^{(M)}$ does not meet $\Pl$ and contains all the Voronoi neighbors of $x_0$ on its boundary. In particular, only the intersection of $\Pl$ with ${\mathcal F}_{x_0,\lambda}^{(M)}$ is needed to construct the Voronoi cell $\Cl$. Consequently, if the sets $\Cl$ and ${\widetilde{\mathcal C}}^{(M)}_{x_0,\lambda}$ differ, then the set ${\mathcal F}_{x_0,\lambda}^{(M)}$ is not included in ${\mathcal B}^{(M)}(x_0,r)$. This implies that there exists an empty ball ${\mathcal B}^{(M)}(y,d^{(M)}(y,x_0))$ of radius at least $r/2$. In other words,
\begin{equation}
  \label{eq:boulevide}
\proba(\Cl\ne {\widetilde{\mathcal C}}^{(M)}_{x_0,\lambda})\le \proba(\exists y\in M\setminus {\mathcal B}^{(M)}(x_0,r/2):{\mathcal B}^{(M)}(y,d^{(M)}(y,x_0))\cap \Pl=\emptyset).   
\end{equation}
Recalling Assumption (A1), we can use Gromov's packing lemma, see e.g. \cite[Lemma 2.2.A]{Gro81} 
 which implies that for $r$ small enough, there exist $m$ points where $m$ is independent of $r$ such that the ball ${\mathcal B}^{(M)}(x_0,r)$ is covered by the balls $B_1,\dots,B_m$ centered at the $m$ points and of radius $r/8$. In particular, any ball ${\mathcal B}^{(M)}(y,d^{(M)}(y,x_0))$ with $d^{(M)}(x_0,y)\ge r/2$ contains one of the $B_i$. Consequently,
\begin{align}\label{eq:majexpo}
\proba(\exists y\in M\setminus {\mathcal B}^{(M)}(x_0,r/2):{\mathcal B}^{(M)}(y,d^{(M)}(y,x_0))\cap \Pl=\emptyset)&\le \proba(\exists i \in \{1,\dots,m\} \mbox{ s.t. } B_i\cap \Pl=\emptyset)\nonumber\\
&\le \sum_{i=1}^m\proba(B_i\cap \Pl=\emptyset)\nonumber\\
&\le m e^{-\lambda cr^n}
\end{align}
where the estimate $\mbox{vol}^{(M)}(B_i)\ge c r^n$ comes from the combination of \eqref{eq:Jru} and Lemma \ref{lem:jacobchtsph} (iii). Inserting \eqref{eq:majexpo} into \eqref{eq:boulevide}, we obtain the statement of Lemma \ref{lem:stabilization}.
\begin{flushright}
$\Box$
\end{flushright}
\noindent{\bf Acknowledgements.} This work was partially supported by the French ANR grant PRESAGE (ANR-11-BS02-003), the French research group GeoSto (CNRS-GDR3477) and the Institut Universitaire de France.
\bibliography{BiblioVoronoi}
\bibliographystyle{alpha}
\end{document}